\definecolor{winered}{rgb}{0.8,0,0}
\definecolor{deepblue}{rgb}{0,0,0.8}
\newtheorem{thm}{Theorem}[section]
\newtheorem{prop}[thm]{Proposition}
\newtheorem{cor}[thm]{Corollary}
\newtheorem{lem}[thm]{Lemma}
\theoremstyle{definition}
\newtheorem{df}[thm]{Definition}
\newtheorem{rmk}[thm]{Remark}
\newtheorem{exm}[thm]{Example}
\newtheorem{const}[thm]{Construction}
\theoremstyle{remark}
\numberwithin{equation}{section}
\newcommand{\bP}{\mathbf{P}}
\newcommand{\A}{\mathbb{A}}
\newcommand{\E}{\mathbb{E}}
\newcommand{\N}{\mathbb{N}}
\renewcommand{\P}{\mathbb{P}}
\newcommand{\Sphere}{\mathbb{S}}
\newcommand{\Z}{\mathbb{Z}}
\newcommand{\cC}{\mathcal{C}}
\newcommand{\cD}{\mathcal{D}}
\newcommand{\cE}{\mathcal{E}}
\newcommand{\cF}{\mathcal{F}}
\newcommand{\cM}{\mathcal{M}}
\newcommand{\cO}{\mathcal{O}}
\newcommand{\rE}{\mathrm{E}}
\newcommand{\et}{{\acute{e}t}}
\DeclareMathOperator{\Spec}{Spec}
\newcommand{\colim}{\mathop{\mathrm{colim}}}
\newcommand{\id}{\mathrm{id}}
\newcommand{\ul}{\underline}
\newcommand{\ol}{\overline}
\newcommand{\cofib}{\mathrm{cofib}}
\newcommand{\fib}{\mathrm{fib}}
\newcommand{\Sh}{\mathrm{Sh}}
\newcommand{\PSh}{\mathrm{PSh}}
\newcommand{\Sp}{\mathrm{Sp}}
\newcommand{\Sm}{\mathrm{Sm}}
\newcommand{\CAlg}{\mathrm{CAlg}}
\newcommand{\NAlg}{\mathrm{NAlg}}
\newcommand{\lSch}{\mathrm{lSch}}
\newcommand{\SmlSm}{\mathrm{SmlSm}}
\newcommand{\Fun}{\mathrm{Fun}}
\newcommand{\Span}{\mathrm{Span}}
\newcommand{\Fin}{\mathrm{Fin}}
\newcommand{\FinGpd}{\mathrm{FinGpd}}
\newcommand{\tfib}{\mathrm{tfib}}
\newcommand{\tcofib}{\mathrm{tcofib}}
\newcommand{\LEq}{\mathrm{LEq}}
\newcommand{\pt}{\mathrm{pt}}
\newcommand{\EM}{\mathrm{H}}
\newcommand{\logSH}{\mathrm{logSH}}
\newcommand{\SH}{\mathrm{SH}}
\newcommand{\Normal}{\mathrm{N}}
\newcommand{\Blow}{\mathrm{Bl}}
\newcommand{\Th}{\mathrm{Th}}
\newcommand{\Deform}{\mathrm{D}}
\newcommand{\Bcy}{\mathrm{B}^\mathrm{cy}}
\newcommand{\Brep}{\mathrm{B}^\mathrm{rep}}
\newcommand{\Bdi}{\mathrm{B}^{\mathrm{di}}}
\newcommand{\Bdrep}{\mathrm{B}^\mathrm{drep}}
\newcommand{\Ncy}{\mathrm{N}^\mathrm{cy}}
\newcommand{\Ndrep}{\mathrm{N}^\mathrm{drep}}
\newcommand{\Ndi}{\mathrm{N}^\mathrm{di}}
\newcommand{\Nsigma}{\mathrm{N}^\sigma}
\newcommand{\CycSp}{\mathrm{CycSp}}
\newcommand{\RCycSp}{\mathbb{R}\mathrm{CycSp}}
\newcommand{\THH}{\mathrm{THH}}
\newcommand{\THR}{\mathrm{THR}}
\newcommand{\TC}{\mathrm{TC}}
\newcommand{\TCR}{\mathrm{TCR}}
\newcommand{\TPR}{\mathrm{TPR}}
\newcommand{\sd}{\mathrm{sd}}
\newcommand{\bTHH}{\mathbf{THH}}
\newcommand{\bTCR}{\mathbf{TCR}}
\newcommand{\bTC}{\mathbf{TC}}
\newcommand{\bTHR}{\mathbf{THR}}
\newcommand{\prelogSH}{\mathrm{prelogSH}}
\newcommand{\Kth}{\mathrm{K}}
\newcommand{\gp}{\mathrm{gp}}
\definecolor{indigo}{rgb}{0.3, 0, 0.5}
\newcommand{\Ztwo}{{\mathbb{Z}/2}}
\newcommand{\Cat}{\mathrm{Cat}}
\begin{document}
\author{Doosung Park}
\address{Bergische Universit{\"a}t Wuppertal,
Fakult{\"a}t Mathematik und Naturwissenschaften
\\
Gau{\ss}strasse 20, 42119 Wuppertal, Germany}
\email{dpark@uni-wuppertal.de}

\title{Motivic real topological Hochschild spectrum}
\subjclass{Primary 19D55; Secondary 11E70, 14A21, 16E40, 55P91}
\keywords{real topological Hochschild homology, logarithmic schemes, logarithmic motivic homotopy theory}
\begin{abstract}
We define real topological Hochschild homology of separated log schemes with involutions.
We show that real topological Hochschild homology is $(\P^n,\P^{n-1})$-invariant,
which leads to the definition of the motivic real topological Hochschild spectrum living in a certain $\Ztwo$-equivariant logarithmic motivic category.
We explore properties of real topological Hochschild homology that can be deduced from the logarithmic motivic homotopy theory.
We also define the motivic real topological cyclic spectrum.
\end{abstract}
\maketitle

\section{Introduction}

Topological Hochschild homology $\THH$ and its cousin $\TC$ have a deep connection with algebraic $K$-theory via the cyclotomic trace of B\"okstedt-Hsiang-Madsen \cite{BHM93}.
The Dundas-Goodwillie-McCarthy theorem \cite{DGM13} provided a computational tool for algebraic $K$-theory.
It was also discovered that $\THH$ contains arithmetic data.
Bhatt, Morrow, and Scholze \cite{BMS19} studied filtrations on the $S^1$-homotopy fixed point spectrum $\TC^-$ of $\THH$,
whose graded pieces are closely related to the prismatic cohomology of Bhatt-Scholze according to \cite[\S 13]{zbMATH07611906}.

\medskip

Hesselholt and Madsen \cite{HM} defined real algebraic $K$-theory,
which refines both algebraic and hermitian $K$-theories in a uniform manner via equivariant homotopy theory.
They also defined real topological Hochschild homology $\THR$ of rings.
For further developments on $\THR$,
we refer to \cite{Hog}, \cite{DMPR21}, and \cite{QS2}.
A forthcoming work of Harpaz, Nikolaus, and Shah will show a real refinement of the Dundas-Goodwillie-McCarthy theorem,
which would justify the real trace method as a computational tool for real or hermitian $K$-theory.

\medskip

One feature of $\THH$ different from algebraic $K$-theory is that $\THH$ is not $\A^1$-invariant even for regular schemes,
i.e.,
the induced map $\THH(X)\to \THH(X\times \A^1)$ is not an equivalence of spectra for every nonempty scheme $X$.
The definition of $\THH$ of schemes is due to Geisser and Hesselholt \cite{GH}.
Hence the motivic methods in $\A^1$-homotopy theory initiated by Morel and Voevodsky \cite{MV} are not directly applicable to $\THH$.

\medskip

Non $\A^1$-invariance of $\THH$ is related to the fact that the sequence of spectra
\[
\THH(Z)\xrightarrow{i_*} \THH(X)
\xrightarrow{j^*} \THH(X-Z)
\]
is not a fiber sequence when $i\colon Z\to X$ is a closed immersion of regular schemes and $j\colon X-Z\to X$ is its open complement.
On the other hand,
the localization sequence in algebraic $K$-theory can be read as the fiber sequence of spectra
\[
\Kth(Z)\xrightarrow{i_*} \Kth(X)\xrightarrow{j^*} \Kth(X-Z).
\]
Hesselholt-Madsen \cite{MR1998478} and Rognes-Sagave-Schlichtkrull \cite{MR3412362} studied localization sequences for $\THH$ in the logarithmic setting.

\medskip

The author's joint work with Binda and {\O}stv{\ae}r \cite{logSH} introduced the $\infty$-category of logarithmic motivic spectra $\logSH(S)$ for fs log schemes $S$,
which aims to incorporate various non $\A^1$-invariant cohomology theories into the motivic framework using logarithmic geometry.
We refer to Ogus's book \cite{Ogu} for logarithmic geometry.
For a closed immersion of schemes $Z\to X$,
let $(X,Z)$ denote the log scheme with the underlying scheme $X$ and with the compactifying log structure associated with the open immersion $X-Z\to X$.
In the construction of $\logSH(S)$ when $S$ is a scheme (with the trivial log structure),
the interval $\A^1$ is replaced with the set of log schemes $(\P^n,\P^{n-1})$ for all integers $n\geq 1$,
and the Nisnevich topology is replaced with the strict Nisnevich topology in \cite{logDM}.
To construct a motivic spectrum in $\logSH(S)$ that represents an existing cohomology theory of schemes,
one can take the following two steps:
\begin{itemize}
\item[Step 1.] Extend the cohomology theory to fs log schemes.
\item[Step 2.] Show that the extension is $(\P^n,\P^{n-1})$-invariant for all integers $n\geq 1$ and satisfies strict Nisnevich descent.
\end{itemize}
Rognes \cite{Rog09} defines $\THH$ of log rings.
According to \cite[\S 8]{logSH},
it is possible to define $\THH$ and $\TC$ of log schemes based on Rognes' definition and to construct a $\P^1$-spectrum $\bTHH$ and $\bTC$ (written as $\mathbf{logTHH}$ and $\mathbf{logTC}$ in loc.\ cit.) representing $\THH$ and $\TC$.
The author's joint work with Binda, Lundemo, and {\O}stv{\ae}r \cite[\S 8]{BLPO} constructed a $\P^1$-spectrum representing Hochschild homology.

\medskip

In this article,
we employ this strategy for $\THR$.
We define $\THR$ of log rings with involutions,
where an involution means an automorphism $\sigma$ such that $\sigma\circ \sigma=\id$.
With the aid of the author's joint work with Hornbostel \cite{THR},
we define $\THR$ of separated log schemes with involutions.
Then we prove that $\THR$ is $(\P^n,\P^{n-1})$-invariant for all integers $n\geq 1$.
This allows us to apply motivic methods in logarithmic motivic homotopy theory to $\THR$.
We also consider real topological cyclic homology,
which is a variant of $\THR$.
One way to obtain $\TCR$ is to impose the real cyclomotic structure on $\THR$, which is due to Quigley and Shah \cite{QS2}.
The following two results are examples of this.

\medskip

\begin{thm}
[Special case of Theorem \ref{mot.4}]
\label{intro.1}
Let $Z\to X$ be a closed immersion of smooth schemes over a noetherian separated scheme $S$.
Then there exist fiber sequences of $\Ztwo$-spectra
\begin{gather*}
\THR(\Th(\Normal_Z X))
\to
\THR(X)
\to
\THR(\Blow_Z X,E),
\\
\TCR(\Th(\Normal_Z X))
\to
\TCR(X)
\to
\TCR(\Blow_Z X,E),
\end{gather*}
where $E$ is the exceptional divisor.
\end{thm}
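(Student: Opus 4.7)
The plan is to deduce the fiber sequence from a general cofiber sequence of motivic spaces in $\SHG$, evaluated on a motivic spectrum representing $\THR$. Three ingredients are needed: representability of $\THR$ in $\SHG$, a logarithmic purity/Gysin triangle in $\SHG$, and the formal fact that a motivic spectrum carries cofiber sequences of pointed motivic spaces to fiber sequences of mapping $\Ztwo$-spectra.

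First I would assemble $\THR$ into a motivic $\P^1$-spectrum $\mathbf{THR}\in \SHG$. The required inputs are $(\P^n,\P^{n-1})$-invariance for every $n\geq 1$ (the main technical result announced in the introduction) together with strict Nisnevich descent in the $\Ztwo$-equivariant setting. These are precisely the two steps of the general strategy recalled from \cite{logSH}, carried out equivariantly. For a smooth $S$-scheme $Y$ with trivial involution one then has $\mathbf{THR}(Y_+)\simeq \THR(Y)$, and similarly for the fs log schemes $(\Blow_Z X,E)$ and $\Th(\Normal_Z X)$ appearing in the statement.

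Next I would invoke the logarithmic blow-up triangle: for a closed immersion $i\colon Z\to X$ of smooth $S$-schemes, there is a cofiber sequence
\[
(\Blow_Z X,E)_+ \to X_+ \to \Th(\Normal_Z X)
\]
in $\SHG$. This is the logarithmic analogue of the Morel-Voevodsky homotopy purity equivalence $X_+/(X-Z)_+ \simeq \Th(\Normal_Z X)$, with the open complement $X-Z$ replaced by the log modification $(\Blow_Z X,E)$. Applying the cohomological functor $\mathbf{THR}(-)$ converts this cofiber sequence into the desired fiber sequence of $\Ztwo$-spectra.

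The main obstacle is the logarithmic Gysin triangle in the $\Ztwo$-equivariant category $\SHG$. In the non-equivariant setting the analogous triangle in $\logSH$ is available from the author's joint work with Binda-{\O}stv{\ae}r through deformation to the normal cone, with $(\P^n,\P^{n-1})$-invariance as the crucial input. To transport it to $\SHG$, one must verify that every step of that argument is compatible with the trivial $\Ztwo$-action used to define the equivariant category, or equivalently deduce the triangle abstractly from the forgetful and fixed-point functors relating $\SHG$ to $\logSH$. Once this equivariant Gysin triangle is established, the rest of the proof is formal.
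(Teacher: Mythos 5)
Your overall shape is right, but the crucial step---the equivariant Gysin triangle---is exactly where your proposal stops, and it is the step the paper handles differently. You propose to build a motivic spectrum $\mathbf{THR}$ in a $\Ztwo$-equivariant logarithmic motivic category $\SHG$ and to prove a purity cofiber sequence $(\Blow_Z X,E)_+\to X_+\to \Th(\Normal_Z X)$ there. Two problems. First, the paper never constructs a well-behaved equivariant logarithmic motivic category: it only defines $\prelogSH_{\Ztwo}(S)$, and Remark \ref{mot.9} explains that a further, not-yet-performed localization would be needed for such a category to behave well. So the equivariant Gysin triangle is not available to quote, and re-running the whole deformation-to-the-normal-cone machinery of \cite{logDM} equivariantly is a substantial task your proposal does not carry out. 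Second, $\Th(\Normal_Z X)$ is not an fs log scheme, so $\THR(\Th(\Normal_Z X))$ must be \emph{defined}; the paper defines it in \eqref{mot.15.1} as $\fib(\THR(\Normal_Z X)\to\THR(\Blow_{Z}(\Normal_Z X),E^N))$, and a genuine part of the content of Theorem \ref{mot.4} is the comparison of this fiber with $\fib(\THR(X)\to\THR(\Blow_Z X,E))$.

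The paper's resolution is the one you gesture at in your last sentence, but executed at the level of the target $\Ztwo$-spectra rather than of motivic categories. One writes down the explicit zigzag through the deformation space,
\[
\fib(\THR(X)\to \THR(\Blow_Z X,E))
\leftarrow
\fib(\THR(\Deform_Z X)\to \THR(\Blow_{Z\times\square}(\Deform_Z X),E^D))
\to
\fib(\THR(\Normal_Z X)\to \THR(\Blow_{Z}(\Normal_Z X),E^N)),
\]
and must show both maps are equivalences of $\Ztwo$-spectra. Since the pair of functors $(i^*,(-)^{\Ztwo})$ on $\Sp_{\Ztwo}$ is jointly conservative, it suffices to check this after applying $i^*$ (which turns $\THR$ into $\THH$) and $(-)^{\Ztwo}$ (which turns it into $\THO$). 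Both $\THH$ and $\THO$ are $(\P^n,\P^{n-1})$-invariant strict Nisnevich sheaves of ordinary spectra, hence lie in $\logSH_{S^1}(S)$ by Theorem \ref{mot.3}, where the non-equivariant deformation-to-the-normal-cone theorem of \cite{logDM} applies verbatim. This sidesteps any equivariant motivic category entirely. To repair your plan, replace ``deduce the triangle abstractly from the forgetful and fixed-point functors relating $\SHG$ to $\logSH$'' with this concrete conservativity argument applied to the values of the presheaf $\THR$.
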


We refer to \eqref{mot.15.1} for the notation $\THR(\Th(\Normal_Z X))$.
Proposition \ref{mot.15} implies that
$\THR(\Th(\Normal_Z X))$ can be written in terms of $\THR$ of schemes.
This sequence can be considered as the \emph{localization sequence for $\THR$}.
The map $\THR(\Th(\Normal_Z X)) \to \THR(X)$ is called the \emph{Gysin map}.

\begin{thm}
[Theorem \ref{mot.5}]
\label{intro.2}
Let $Z\to X$ be a closed immersion of smooth schemes over a finite dimensional noetherian separated scheme $S$.
Then the induced squares of $\Ztwo$-spectra
\[
\begin{tikzcd}
\THR(X)\ar[d]\ar[r]&
\THR(Z)\ar[d]
\\
\THR(Z\times_X \Blow_Z X)\ar[r]&
\THR(\Blow_Z X),
\end{tikzcd}
\begin{tikzcd}
\TCR(X)\ar[d]\ar[r]&
\TCR(Z)\ar[d]
\\
\TCR(Z\times_X \Blow_Z X)\ar[r]&
\TCR(\Blow_Z X)
\end{tikzcd}
\]
are cartesian,
where $\Blow_Z X$ denotes the blow-up of $X$ along $Z$.
\end{thm}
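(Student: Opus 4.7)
The plan is to invoke the motivic representability of $\THR$ and reduce the cartesianness to a cocartesian statement in the $\Ztwo$-equivariant logarithmic motivic category.

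Set $Y = \Blow_Z X$ and $E = Z \times_X Y$. By construction, the $(\P^n, \P^{n-1})$-invariance of $\THR$ together with strict Nisnevich descent produces a motivic spectrum representing $\THR$ on smooth schemes. Hence the claimed cartesianness reduces to the cocartesianness of the corresponding blow-up square of suspension spectra
\[
\begin{tikzcd}
\Sigma^\infty_+ E \ar[r] \ar[d] & \Sigma^\infty_+ Z \ar[d]\\
\Sigma^\infty_+ Y \ar[r] & \Sigma^\infty_+ X
\end{tikzcd}
\]
in the $\Ztwo$-equivariant logarithmic motivic category, a $\Ztwo$-equivariant logarithmic analogue of the classical motivic blow-up square formula established in \cite{logSH}.

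A direct derivation of this cocartesian statement proceeds via Theorem \ref{mot.4}. Applying the localization sequence to both $Z \hookrightarrow X$ and $E \hookrightarrow Y$ produces two fiber sequences with a common third term $\THR(Y, E)$; for the latter we use that $E$ is already a smooth divisor in $Y$, so $\Blow_E Y = Y$ with exceptional divisor $E$ itself. Since the rows are fiber sequences with the same target, comparison yields a cartesian square of Thom $\THR$-spectra involving $\THR(\Th(\Normal_Z X))$, $\THR(\Th(\Normal_E Y))$, $\THR(X)$, and $\THR(Y)$.

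The main obstacle is to translate this Thom-spectrum cartesian square into the desired blow-up square involving $\THR(Z)$ and $\THR(E)$. For this one invokes Proposition \ref{mot.15}, which rewrites $\THR(\Th(V))$ in terms of $\THR$ of schemes, together with the projective bundle formula for $\THR$ (a formal consequence of $(\P^n, \P^{n-1})$-invariance). The key geometric identifications are $E = \P(\Normal_Z X)$ and $\Normal_E Y \cong \mathcal{O}_E(-1)$; combined with the projective bundle formula these match the Thom terms with $\THR(Z)$ and $\THR(E)$ and reduce the claim to formal manipulations of fiber sequences, following the non-equivariant template of \cite{logSH}.
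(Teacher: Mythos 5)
There is a genuine gap in both of your routes. The paper's proof is a one-line reduction: the pair of functors $(i^*,(-)^{\Ztwo})$ on $\Ztwo$-spectra is jointly conservative, so cartesianness of the square for $\THR$ follows from cartesianness of the corresponding squares for $\THH\simeq i^*\THR$ and $\THO=\THR^{\Ztwo}$; these hold because $\THH$ and $\THO$ lie in $\logSH_{S^1}(S)$ by Theorem \ref{mot.3}, where blow-up descent is known for \emph{every} object by \cite[Theorem 7.3.3]{logDM}. Your first route instead asserts a blow-up cocartesianness statement ``in the $\Ztwo$-equivariant logarithmic motivic category.'' No such result is available: the paper only constructs $\prelogSH_{\Ztwo}(S)$, explicitly flags in Remark \ref{mot.9} that this category is not yet the right localization, and states just before Proposition \ref{mot.15} that the results of \cite{logDM} and \cite{logSH} cannot be applied directly to $\THR$ precisely because it is a sheaf of $\Ztwo$-spectra. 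Proving the equivariant blow-up formula you invoke would amount to redoing the work of \cite[Theorem 7.3.3]{logDM} equivariantly, which is exactly what the conservativity trick is designed to avoid.

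Your fallback via Theorem \ref{mot.4} fails at the step where you ``match the Thom terms with $\THR(Z)$ and $\THR(E)$'' using ``the projective bundle formula for $\THR$ (a formal consequence of $(\P^n,\P^{n-1})$-invariance).'' A projective bundle formula is never a formal consequence of bundle-invariance; it requires an orientation, and the paper shows $\THR$ is \emph{not} oriented: Example \ref{mot.6} exhibits $\THR(\Th(\Normal_Z X))\not\simeq\Sigma^{n(\sigma-1)}\THR(Z)$, and already for trivial bundles Proposition \ref{mot.2} gives $\THR(X\times\P^n)$ as a sum involving $i_*\THH(X)$ summands rather than shifted copies of $\THR(X)$. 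So the identification of $\fib(\THR(\Th(\Normal_Z X))\to\THR(\Th(\Normal_E Y)))$ with $\fib(\THR(Z)\to\THR(E))$ does not follow from anything established, and in the sources the Gysin sequence is a sibling of blow-up descent rather than a stronger statement from which it can be formally extracted. (Additionally, your comparison of the two localization sequences presupposes a compatible map of Gysin maps $\THR(\Th(\Normal_Z X))\to\THR(\Th(\Normal_E Y))$, which also needs justification.) The correct move is to apply $i^*$ and $(-)^{\Ztwo}$ first and quote the non-equivariant blow-up descent for $\THH$ and $\THO$.
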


The statement in this theorem involves no log schemes,
but the proof uses log schemes.

\medskip

Hu, Kriz, and Ormsby \cite{HuKO} considered $\P^1\wedge \P^\sigma$-spectra instead of $\P^1$-spectra to define a motivic $\Ztwo$-spectrum,
where $\P^\sigma$ is the scheme $\P^1$ with the involution given by $[x:y]\mapsto [y:x]$.
In Definition \ref{mot.8},
we analogously introduce the $\infty$-category of prelogarithmic motivic $\Ztwo$-spectra $\prelogSH^{\Ztwo}(S)$ for any scheme $S$,
which is helpful for constructing logarithmic motivic spectra using the fixed point functor
\[
(-)^{\Ztwo}
\colon
\prelogSH^{\Ztwo}(S)
\to
\logSH(S)
\]
in Definition \ref{mot.10}.
We also construct the ``forgetful'' functor 
\[
i^*
\colon
\prelogSH^{\Ztwo}(S)
\to
\logSH(S)
\]
in Definition \ref{mot.13}.
The reason why we add ``pre'' in the notation $\prelogSH^{\Ztwo}(S)$ is that a further localization is desired to obtain a better behaved $\infty$-category of logarithmic motivic $\Ztwo$-spectra,
see Remark \ref{mot.9} for an explanation.

\medskip

Together with the $\P^1\wedge \P^\sigma$-periodicity of $\THR$ in \cite[Proposition 5.1.5]{THR},
we can define the periodic  $\P^1\wedge \P^\sigma$-spectra
\begin{gather*}
\bTHR
:=
(\THR^{\Z/2},\THR^{\Z/2},\cdots)
\in
\prelogSH^{\Ztwo}(S),
\\
\bTCR
:=
(\TCR^{\Z/2},\TCR^{\Z/2},\cdots)
\in
\prelogSH^{\Ztwo}(S)
\end{gather*}
for every finite dimensional noetherian separated scheme $S$.
The construction of $\bTHR$ resembles the construction of the motivic real $K$-theory $\Ztwo$-spectrum \cite[\S 4.3]{HuKO}.
We obtain new $\P^1$-spectra $\bTHR^{\Ztwo}$ and $\bTCR^{\Ztwo}$,
while we show $i^*\bTHR\simeq \bTHH$ and $i^*\bTCR\simeq \bTC$ in Proposition \ref{mot.14}.

\subsection*{Organization of the article}

To define the $\P^1\wedge \P^\sigma$-spectrum $\bTHR$,
we need the following ingredients:

\begin{itemize}
\item[(1)]
The definition of $\THR$ of log rings with involutions.
\item[(2)]
Isovariant \'etale descent property for $\THR$.
\item[(3)]
Invariance of $\THR$ under passing to the associated log structure.
\item[(4)]
$\P^1\wedge \P^\sigma$-periodicity of $\THR$.
\item[(5)]
$(\P^n,\P^{n-1})$-invariance of $\THR$.
\end{itemize}

Due to \cite{THR},
we have (2) and (4).
In \S \ref{thrring},
we deal with (1).
This requires dihedral replete bar constructions in \S \ref{dih},
which is an equivariant analogue of Rognes' replete bar constructions \cite[Definition 3.16]{Rog09}.
In \S \ref{thrscheme},
we show (3) for separated log schemes.
The strategy is to work in a sufficiently local situation.
In \S \ref{TCR},
we review the notion of real cyclotomic spectra following Quigley and Shah \cite{QS2}.
Then we generalize many results in the previous sections to real cyclotomic spectra so that we can define $\TCR$ of finite dimensional noetherian separated log schemes.
In \S \ref{mot},
we show (5) by providing an explicit computation of $\THR(\P^n,\P^{n-1})$ using cubes in $\infty$-categories.
Then we discuss properties of $\THR$ that can be deduced from \cite{logSH},
and we construct the $\P^1\wedge \P^\sigma$-spectra $\bTHR$ and $\bTC$.

\subsection*{Acknowledgement}
This research was conducted in the framework of the DFG-funded research training group GRK 2240: \emph{Algebro-
Geometric Methods in Algebra, Arithmetic and Topology}.
We thank Jens Hornbostel for many helpful conversations on this topic.
We are grateful to the referee for detailed and precise comments leading to improvements of this text.

\section{Dihedral replete bar constructions}
\label{dih}

Rognes \cite[Definition 3.16]{Rog09} defined the replete bar construction of a commutative monoid,
which is a key input to define topological Hochschild homology of log rings.
In this section,
we discuss a dihedral refinement of this construction.

We refer to \cite[Example 4.1.3]{THR} for a review of real simplicial sets and dihedral sets,
which are defined using crossed simplicial groups of Fiedorowicz-Loday \cite{FL}.
A \emph{real simplicial set} $X$ is a simplicial set equipped with involutions $w_q\colon X_q \to X_q$ (i.e., an automorphism such that $w_q\circ w_q=\id$) for all simplicial degrees $q$ satisfying
the relations
\[
d_iw_{q}=w_{q-1}d_{q-i},
\text{ }
s_iw_q=w_{q+1}s_{q-i}
\]
for $0\leq i\leq q$.
Note that a real simplicial set is different from a simplicial set with involution, i.e., a simplicial object in the category of sets with involution.
A \emph{dihedral simplicial set} $X$ is a real simplicial set equipped with automorphisms $t_q\colon X_q\to X_q$ for all simplicial degrees $q$ satisfying certain relations.

Let $\oplus$ denote the coproduct in the category of commutative monoids.
For commutative monoids $P$ and $Q$,
the coproduct $P\oplus Q$ is naturally isomorphic to the product $P\times Q$.

\begin{df}
\label{drep.5}
Let $\Delta_\sigma^1$ be the real simplicial set whose underlying simplicial set is $\Delta^1$ and whose involution $w\colon (\Delta^1)_q\to (\Delta^1)_q$ in simplicial degree $q$ sends the $q$-simplex $a_0 \cdots a_q$ to $(1-a_0)\cdots (1-a_q)$.
For a commutative monoid $P$ with involution $w$,
let $P\otimes \Delta_\sigma^1$ be the real simplicial set whose underlying simplicial set is the tensor product $P\otimes \Delta^1$ defined by the formula
\[
(P\otimes \Delta^1)_q
:=
\bigoplus_{i\in (\Delta^1)_q}P
\]
and whose involution is given by $(x_0,\ldots,x_{q+1})\to (w(x_{q+1}),\ldots,w(x_0))$ in simplicial degree $q$,
where the indices $0,1,\ldots,q+1$ correspond to the $q$-simplicies $0\cdots 00,0\cdots 01,\ldots,1\cdots 11$ in $\Delta^1$.
\end{df}

\begin{prop}
\label{drep.1}
Let $P$ be a commutative monoid with involution.
Then the map of real simplicial sets
\[
P\otimes \Delta_\sigma^1
\to
P
\]
given by $(x_0,\ldots,x_q)\mapsto x_0+\cdots+x_q$ in simplicial degree $q$ is a $\Ztwo$-weak equivalence.
\end{prop}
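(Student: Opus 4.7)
My plan is to verify the two standard conditions for a $\Ztwo$-weak equivalence of real simplicial sets: (i) the underlying map of simplicial sets is a weak equivalence, and (ii) the induced map on the $\Ztwo$-fixed-point spaces of the geometric realizations is a weak equivalence.

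Condition (i) is classical. The augmentation $\Delta^1\to\Delta^0$ is a simplicial homotopy equivalence, and the functor $P\otimes(-)$ from simplicial sets to simplicial commutative monoids preserves such equivalences. Concretely, one can exhibit an extra degeneracy on the augmented simplicial object $P\otimes\Delta^1\to P$ that gives a simplicial contracting homotopy onto the image of one of the endpoints of $\Delta^1$, so the underlying map is in fact a simplicial homotopy equivalence.

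For condition (ii), the starting observation is that $\Delta_\sigma^1\to\ast$ is itself a $\Ztwo$-weak equivalence: $|\Delta_\sigma^1|$ is the interval $[0,1]$ with the flip $t\mapsto 1-t$, whose fixed-point space is the singleton $\{1/2\}$. I would then argue that $P\otimes(-)$ preserves $\Ztwo$-weak equivalences of real simplicial sets. This compatibility can be packaged model-theoretically, or handled directly by using the edgewise subdivision reviewed in \cite[Example 4.1.3]{THR} to identify $(P\otimes\Delta_\sigma^1)^{\Ztwo}$ with the realization of a simplicial object built out of $P^\sigma$ and a contractible combinatorial interval, which reduces the fixed-point claim to another instance of condition (i) applied to $P^\sigma$. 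I expect the main obstacle to lie here: the involution on $(P\otimes\Delta^1)_q$ simultaneously reverses the order of the $(q+2)$-tuple and applies $\sigma$ coordinatewise, so the levelwise fixed sets alternate between $P^{q/2+1}$ for $q$ even and $P^{(q+1)/2}\times P^\sigma$ for $q$ odd, with face and degeneracy maps interweaving the two. The bulk of the work is in carrying out this subdivision identification cleanly and matching it with the corresponding edgewise subdivision of the target $P$.
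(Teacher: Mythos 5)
Your overall strategy is the same as the paper's: verify the underlying weak equivalence and then the fixed points of the realization by passing to the Segal (edgewise) subdivision. Part (i) is fine. For part (ii), however, you have only located the difficulty rather than resolved it: the identification of $\sd_\sigma(P\otimes\Delta_\sigma^1)$ that you defer as ``the bulk of the work'' is the entire content of the proposition. Moreover, the ``alternating levelwise fixed sets'' you compute ($P^{q/2+1}$ for $q$ even, $P^{(q+1)/2}\times P^{\Ztwo}$ for $q$ odd) are the fixed sets of the involutions $w_q$ on the \emph{unsubdivided} object; since $w_q$ intertwines $d_i$ with $d_{q-i}$ rather than commuting with it, these sets do not assemble into a simplicial subset (already $d_0$ of a fixed $1$-simplex need not be fixed), so this description cannot serve as the starting point for the fixed-point computation and the proposed matching with a subdivision of the target is not something one can ``carry out cleanly'' from it.

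What actually closes the gap, and what the paper does, is the observation that $\sd_\sigma(\Delta_\sigma^1)\cong \Delta^1\amalg_{\{1\}}\Delta^1$ with the involution swapping the two copies, whence $\sd_\sigma(P\otimes\Delta_\sigma^1)\cong (P\otimes\Delta^1)\oplus_P(P\otimes\Delta^1)$ with involution $(x,y)\mapsto(w(y),w(x))$. The simplicial contraction of $\Delta^1$ onto the vertex $1$ (your extra degeneracy from part (i)) induces a deformation retraction of each copy of $P\otimes\Delta^1$ onto the glued copy of $P$, and since the involution merely interchanges the two copies, these retractions assemble into a $\Ztwo$-equivariant homotopy equivalence $P\xrightarrow{\simeq}(P\otimes\Delta^1)\oplus_P(P\otimes\Delta^1)$ compatible with the augmentation. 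This handles underlying and fixed points simultaneously, with no separate analysis of $P^{\Ztwo}$ or of a combinatorial interval. Your sketch is pointed in the right direction, but without this (or an equivalent) identification it is not yet a proof.
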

\begin{proof}
Let $\sd_\sigma$ be the Segal subdivision functor \cite{Seg73}.
We have an isomorphism of simplicial sets with involutions
\[
\sd_\sigma(\Delta_\sigma^1)\simeq \Delta^1\amalg_{\{1\}} \Delta^1
\]
where the involution on the right-hand side switches the factors.
This induces an isomorphism of simplicial sets with involutions
\[
\sd_\sigma(P\otimes \Delta_\sigma^1)
\simeq
(P\otimes \Delta^1) \oplus_P (P\otimes \Delta^1),
\]
where the involution on the right-hand side is obtained by the formula $(x,y)\mapsto (w(y),w(x))$ with $w$ in Definition \ref{drep.5}.
Since $\Delta^1$ is contractible,
the map $P\to P\otimes \Delta^1$ induced by $\{1\}\to \Delta^1$ is a homotopy equivalence.
This yields a $\Ztwo$-homotopy equivalence
\[
P
\xrightarrow{\simeq}
(P\otimes \Delta^1) \oplus_P (P\otimes \Delta^1),
\]
which implies the claim.
\end{proof}

We review the $\infty$-categorical formulation of equivariant homotopy theory due to Bachmann-Hoyois \cite[\S 9]{BH21}.
See \cite[\S A.1]{THR} for a more detailed review.
Let $\FinGpd$ denote the $2$-category of finite groupoids.
Bachmann and Hoyois constructed the functor
\[
\SH
\colon
\Span(\FinGpd)
\to
\CAlg(\Cat_\infty),
\;
(X\xleftarrow{f} Y\xrightarrow{p} Z)
\mapsto
p_\otimes f^*,
\]
see \cite[\S C]{BH21} for the notation $\Span$.
For a morphism $f$ in $\FinGpd$,
$f^*$ admits a right adjoint $f_*$.
If $f$ is a finite covering,
then $f^*$ admits a left adjoint $f_\sharp$.

For a finite groupoid $X$,
let $\NAlg(\SH(X))$ be the $\infty$-category of normed $X$-spectra \cite[Definition 9.14]{BH21},
and let $\CAlg(\SH(X))$ be the $\infty$-category of $\E_\infty$-rings in $\SH(X)$.
We have the forgetful functor $\NAlg(\SH(X))\to \CAlg(\SH(X))$.
We use the notation $\wedge$ for the coproduct in $\NAlg(\SH(X))$ and $\CAlg(\SH(X))$.
For a morphism of finite groupoids $f\colon X\to S$,
we have the induced adjoint functors
\[
\begin{tikzcd}
\CAlg(\SH(S))
\ar[r,shift left=0.75ex,"f^*"]
\ar[r,shift right=0.75ex,"f_*"',leftarrow]
&
\CAlg(\SH(X)).
\end{tikzcd}
\]
If $f\colon X\to S$ is a finite covering of finite groupoids,
then we have the induced sequence of adjoint functors
\[
\begin{tikzcd}[column sep=large]
\NAlg(\SH(S))
\ar[r,shift left=1.5ex,leftarrow,"f_\sharp"]
\ar[r,"f^*" description]
\ar[r,"f_*"',shift right=1.5ex,leftarrow]
&
\NAlg(\SH(X)).
\end{tikzcd}
\]
On the other hand,
if $f$ has connected fibers,
then the induced functor
\[
f_\otimes \colon \CAlg(\SH(X))\to \CAlg(\SH(S))
\]
preserves colimits.

For a finite group $G$,
the \emph{$\infty$-category of $G$-spectra} is
\[
\Sp^G
:=
\SH(B G).
\]
We also set $\NAlg^G:=\NAlg(\SH(B G))$ and $\CAlg^G:=\CAlg(\SH(B G))$.
We omit the superscripts $G$ in this notation if $G$ is trivial.
There is an equivalence of $\infty$-categories $\CAlg\simeq \NAlg$.

Consider the obvious functors $\pt \xrightarrow{i} B (\Ztwo) \xrightarrow{p} \pt$.
Observe that $i$ is a finite covering and $p$ has connected fibers.
We often use the alternative notation
\[
(-)^{\Ztwo}
:=
p_*,
\;
N^{\Ztwo}
:=
i_\otimes,
\;
\Phi^{\Ztwo}
:=
p_\otimes.
\]
The functor $N^{\Ztwo}$ is the \emph{norm functor} of Hill-Hopkins-Ravenel \cite{HHR},
and the functor $\Phi^{\Ztwo}$ is the \emph{geometric fixed point functor}.
We have the induced functors
\begin{gather*}
i^*\colon \Sp^{\Ztwo} \to \Sp,
\;
N^{\Ztwo},i_\sharp,i_* \colon \Sp\to \Sp^{\Ztwo},
\\
i^*\colon \NAlg^{\Ztwo} \to \CAlg,
\;
N^{\Ztwo},i_\sharp,i_* \colon \CAlg\to \NAlg^{\Ztwo},
\\
p^*\colon \Sp\to \Sp^{\Ztwo},
\;
\Phi^{\Ztwo},(-)^{\Ztwo} \colon \Sp^{\Ztwo}\to \Sp
\\
p^*\colon \CAlg\to \CAlg^{\Ztwo},
\;
\Phi^{\Ztwo},(-)^{\Ztwo} \colon \CAlg^{\Ztwo}\to \CAlg,
\;
\end{gather*}
We refer to \cite[Proposition A.2.7]{THR} for fundamental relations among these functors.
We note that the pair of functors $(i^*,\Phi^{\Ztwo})$ is conservative.

Let $f\colon X\to S$ be a morphism of finite groupoids.
For every map $R\to A$ in $\NAlg(\SH(S))$ and map $f^*R\to B$ in $\NAlg(\SH(X))$,
we have the natural map
\[
A\wedge_R f_*B
\to
f_*(f^*A\wedge_{f^*R}B)
\]
given by the composite
\[
A\wedge_R f_*B
\xrightarrow{ad}
f_*f^*(A\wedge_R f_*B)
\xrightarrow{\simeq}
f_*(f^*A \wedge_{f^*R} f^*f_*B)
\xrightarrow{ad'}
f_*(f^*A \wedge_{f^*R} B),
\]
where $ad$ (resp.\ $ad'$) denotes the map obtained by the unit (resp.\ counit).

\begin{lem}
\label{thrlog.7}
Let $r\colon \Ztwo\to \pt$ be the obvious morphism of finite groupoids.
Then the natural map
\[
A\wedge_R r_*B
\to
r_*(r^*A\wedge_{r^*R} B)
\]
is an equivalence for every map $R\to A$ in $\CAlg$ and map $r^*R\to B$ in $\NAlg(\SH(\Ztwo))$.
\end{lem}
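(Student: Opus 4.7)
The plan is to reduce the statement to the canonical distributivity of relative smash products over finite coproducts, using the decomposition $\SH(\Ztwo)\simeq\Sp\times\Sp$. Since the forgetful functor $\NAlg(\SH(\Ztwo))\to\SH(\Ztwo)$ is conservative, it suffices to verify the map is an equivalence on underlying objects. As $\Ztwo$ is a discrete finite groupoid, $\SH(\Ztwo)\simeq\Sp\times\Sp$; under this equivalence $q^*$ is the diagonal and $q_*$ is the product, which in the stable setting coincides with the coproduct $(-)\oplus(-)$ by ambidexterity for finite coverings of finite groupoids. Writing $B=(B_0,B_1)$, we obtain $q_*B\simeq B_0\oplus B_1$, while $q^*A=(A,A)$ and $q^*R=(R,R)$.

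Since $\wedge_R$ preserves colimits in each variable, the canonical distributivity equivalence
\[
A\wedge_R(B_0\oplus B_1)\xrightarrow{\simeq}(A\wedge_R B_0)\oplus(A\wedge_R B_1)
\]
identifies the left-hand side of the lemma with a direct sum. On the right-hand side, $q^*A\wedge_{q^*R}B=(A\wedge_R B_0,\,A\wedge_R B_1)$, and applying $q_*$ yields $(A\wedge_R B_0)\oplus(A\wedge_R B_1)$ as well. A direct unwinding of the definition given in the excerpt, using that the unit $\id\to q_*q^*$ corresponds to the diagonal $X\to X\oplus X$, the counit $q^*q_*\to\id$ to componentwise projection, and that $q^*$ is symmetric monoidal, identifies the natural map with the distributivity equivalence above.

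The main obstacle is the final bookkeeping step of matching the map built from units, counits, and the symmetric monoidal structure with the distributivity equivalence; this is routine but tedious. A cleaner alternative is to phrase the argument abstractly: since $q$ is a finite covering, the symmetric monoidal left adjoint $q_\sharp$ exists and satisfies a projection formula automatically, and the stable ambidexterity equivalence $q_\sharp\simeq q_*$ converts this into the desired formula for $q_*$, circumventing the direct computation altogether.
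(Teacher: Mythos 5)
Your main argument is correct and is essentially the paper's proof: the paper likewise identifies $q^*$ with the diagonal $\CAlg\to\CAlg\times\CAlg$ and $q_*$ with the direct sum, and leaves the remaining distributivity bookkeeping to the reader, which you carry out. One caution about your closing ``cleaner alternative'': the ambidexterity equivalence $q_\sharp\simeq q_*$ holds on underlying spectra (or modules), but the left adjoint of $q^*$ on $\NAlg(\SH(\Ztwo))$ is the smash product $(B_0,B_1)\mapsto B_0\wedge B_1$, not the product, so that route does not transfer to the algebra-level statement as directly as suggested; since it is only an aside, the proof stands on your first two paragraphs.
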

\begin{proof}
As observed in \cite[Example 9.15]{BH21},
we have an equivalence of $\infty$-categories $\NAlg(\SH(\Z/2))\simeq \CAlg(\SH(\Z/2))$.
We also have an equivalence of $\infty$-categories $\SH(\Z/2)\simeq \SH(\pt) \times \SH(\pt)$ by \cite[Lemma 9.6]{BH21} and hence $\CAlg(\SH(\Z/2))\simeq \CAlg\times \CAlg$.
The functors $i_0^*,i_1^*\colon \NAlg(\SH(\Ztwo))\to \CAlg$ induced by the two inclusions $i_0,i_1\colon \pt\to \Z/2$ can be identified with the two projections $\CAlg\times \CAlg\to \CAlg$.
Using $ri_0=ri_1$,
we see that the functor $r^*\colon \CAlg\to \NAlg(\SH(\Ztwo))$ can be identified with the diagonal functor $\CAlg \to \CAlg\times \CAlg$, and the functor $r_*\colon \NAlg(\SH(\Ztwo))\to \CAlg$ can be identified with the direct sum functor $\oplus \colon \CAlg\times \CAlg\to \CAlg$.
Use these explicit descriptions to show the claim.
\end{proof}

\begin{lem}
\label{thrlog.6}
For $R,A\in \NAlg^{\Ztwo}$ and $B\in \CAlg$,
the natural map
\begin{equation}
\label{thrlog.6.1}
A\wedge_R i_*B
\to
i_*(i^*A\wedge_{i^*R}B)
\end{equation}
is an equivalence.
\end{lem}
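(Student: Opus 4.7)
The plan is to reduce \eqref{thrlog.6.1} to Lemma \ref{thrlog.7} by applying a conservative family of functors on underlying $\Ztwo$-spectra and invoking base change. First, since the forgetful functor $\NAlg_{\Ztwo}\to \Sp_{\Ztwo}$ is conservative and the pair $(i^*,\Phi^{\Ztwo})$ is conservative on $\Sp_{\Ztwo}$ by \cite[Proposition A.2.7]{THR}, it suffices to show that $i^*$ and $\Phi^{\Ztwo}$ each carry \eqref{thrlog.6.1} to an equivalence.

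For $i^*$, I would use that $i^*$ is symmetric monoidal together with base change along the pullback square $\Ztwo\simeq \pt\times_{\rB (\Ztwo)}\pt$, which yields $i^*i_*\simeq q_*q^*$, where $q\colon \Ztwo\to \pt$ is as in Lemma \ref{thrlog.7}. This rewrites $i^*$ of the left hand side of \eqref{thrlog.6.1} as $i^*A\wedge_{i^*R} q_*q^*B$. Applying Lemma \ref{thrlog.7} with $R,A$ replaced by $i^*R,i^*A\in \CAlg$ and with $B$ replaced by $q^*B\in \NAlg(\SH(\Ztwo))$ then produces
\[
i^*A\wedge_{i^*R} q_*q^*B
\simeq
q_*(q^*i^*A\wedge_{q^*i^*R} q^*B)
\simeq
q_*q^*(i^*A\wedge_{i^*R} B)
\simeq
i^*i_*(i^*A\wedge_{i^*R} B),
\]
which is $i^*$ of the right hand side; the middle step uses symmetric monoidality of $q^*$ and the last step is base change again.

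For $\Phi^{\Ztwo}$, I expect both sides to vanish. On underlying spectra $i_*\simeq i_\sharp$ by ambidexterity for the finite covering $i$, and $\Phi^{\Ztwo}$ annihilates induced $\Ztwo$-spectra of the form $i_\sharp(-)$, so $\Phi^{\Ztwo}(i_*B)\simeq 0$. Since $\Phi^{\Ztwo}$ is symmetric monoidal, the left hand side becomes $\Phi^{\Ztwo}A\wedge_{\Phi^{\Ztwo}R}\Phi^{\Ztwo}(i_*B)\simeq 0$; the same reasoning annihilates the right hand side, so the map is trivially an equivalence after $\Phi^{\Ztwo}$.

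The main obstacle I anticipate is not conceptual but bookkeeping: one must check that the zigzag produced on the $i^*$-side genuinely coincides with $i^*$ applied to the natural map built from the unit $ad$ and counit $ad'$. This should follow from a diagram chase using naturality of base change with respect to $ad$ and $ad'$, together with compatibility of $i^*$ with relative smash products, but it requires unwinding several universal properties carefully.
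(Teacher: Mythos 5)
Your proposal is correct and follows essentially the same route as the paper: reduce via the conservative pair $(i^*,\Phi^{\Ztwo})$, kill both sides under $\Phi^{\Ztwo}$ using $\Phi^{\Ztwo}i_*\simeq 0$, and on the $i^*$-side combine the base-change equivalence $i^*i_*\simeq q_*q^*$ with Lemma \ref{thrlog.7}. The bookkeeping you defer at the end is precisely what the paper's proof carries out explicitly, with the final identification resting on the counit--unit (triangle) identity for the adjunction $(q^*,q_*)$.
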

\begin{proof}
It suffices to show that \eqref{thrlog.6.1} becomes equivalences after applying $i^*$ and $\Phi^{\Ztwo}$.
With the aid of the forgetful functor $\NAlg^{\Ztwo}\to \CAlg^{\Ztwo}$, we obtain an equivalence $\Phi^{\Ztwo}(A\wedge_R i_*B)\simeq \Phi^{\Ztwo}A \wedge_{\Phi^{\Ztwo}R}\Phi^{\Ztwo}i_*B$.
By \cite[Proposition A.2.7(3),(5)]{THR},
we have $\Phi^{\Ztwo}i_*\simeq 0$.
Hence both sides of \eqref{thrlog.6.1} vanish after applying $\Phi^{\Ztwo}$.

Apply \cite[Proposition A.1.9]{THR} to the cartesian square
\[
\begin{tikzcd}
\Ztwo\ar[d,"r"']\ar[r,"r"]&
\pt\ar[d,"i"]&
\\
\pt\ar[r,"i"]&
B (\Ztwo)
\end{tikzcd}
\]
to obtain a natural equivalence $r_*r^*\simeq i^*i_*$.
We have natural equivalences
\[
r_*r^*((-)\wedge_{(-)} (-))
\simeq
r_*(r^*(-)\wedge_{r^*(-)}r^*(-))
\simeq 
(-)\wedge_{(-)}r_*r^*(-),
\]
where the second one is due to Lemma \ref{thrlog.7}.
We have the induced commutative diagram
\[
\begin{tikzcd}[column sep=tiny]
i^*(A\wedge_R i_*B)\ar[r,"ad"]\ar[d,"\simeq"']&
i^*i_*i^*(A\wedge_R i_*B)\ar[r,"\simeq"]&
i^*i_*(i^*A\wedge_{i^*R} i^*i_*B)\ar[r,"ad'"]\ar[d,"\simeq"]&
i^*i_*(i^*A\wedge_{i^*R} B)\ar[d,"\simeq"]
\\
i^*A\wedge_{i^*R}i^*i_*B\ar[rr,"ad"]\ar[rrd,"ad"']&
&
r_*r^*(i^*A\wedge_{i^*R}i^*i_*B)\ar[r,"ad'"]\ar[d,"\simeq"]&
r_*r^*(i^*A\wedge_{i^*R}B)\ar[d,"\simeq"]
\\
&
&
i^*A\wedge_{i^*R}r_*r^*i^*i_*B\ar[r,"ad'"]&
i^*A\wedge_{i^*R}r_*r^*B.
\end{tikzcd}
\]
Hence to show that \eqref{thrlog.6.1} becomes an equivalence after applying $i^*$, it suffices to show that the composite of the upper vertical maps in the commutative diagram
\[
\begin{tikzcd}
i^*i_*B\ar[r,"ad"]\ar[d,"\simeq"']&
r_*r^*i^*i_*B\ar[r,"ad'"]\ar[d,"\simeq"]&
r_*r^*B\ar[d,"\simeq"]
\\
r_*r^*B\ar[r,"ad"]&
r_*r^*r_*r^*B\ar[r,"ad'"]&
r_*r^*B
\end{tikzcd}
\]
is an equivalence.
This is true since the composite of the lower horizontal maps is an equivalence by the counit-unit identity.
\end{proof}

For a simplicial set (resp.\ real simplicial set) $X$,
we use the notation $\Sphere[X]:=\Sigma^\infty X_+$,
which is a spectrum (resp.\ $\Ztwo$-spectrum).
If $P$ is a commutative monoid,
then $\Sphere[P]$ is an object of $\CAlg$.
If $P$ is a commutative monoid with involution,
then $\Sphere[P]$ is a commutative monoid in the model category of orthogonal $\Ztwo$-spectrum in the sense of \cite[Definition A.2.2]{THR}.
Due to \cite[Remark A.2.6]{THR},
we can regard $\Sphere[P]$ as an object of $\NAlg^{\Ztwo}$.

For a commutative monoid $P$ with involution,
let $\Ndi P$ be the \emph{dihedral nerve of $P$} in \cite[Definition 4.2.2]{THR}, and let $\Bdi P$ be its dihedral geometric realization.
In simplicial degree $q$,
we have $(\Ndi P)_q:=P^{\times (q+1)}$.
We obtain the cyclic nerve $\Ncy i^* P$ \cite[\S 2.3]{zbMATH03671343} of $i^*P$ if we forget the involution structure on $\Ndi P$.
There is a map of dihedral sets
\begin{equation}
\label{drep.2.1}
\Ndi P
\to
P
\end{equation}
sending $(x_0,\ldots,x_q)$ to $x_0+\cdots+x_q$ in simplicial degree $q$.

\begin{df}
\label{thrlog.9}
Let $\cC$ be an ordinary category.
An \emph{object of $\cC$ with involution} is an object of $\cC_{\Ztwo}:=\Fun(B (\Ztwo),\cC)$.
Let
\[
i^*\colon \Fun(B (\Ztwo),\cC)
\to
\cC
\]
be the forgetful functor,
and let $i_\sharp$ (resp.\ $i_*$) be its left adjoint (resp.\ right adjoint) if it exists.
\end{df}

\begin{exm}
If $P$ is a commutative monoid,
then $i_*P$ is the commutative monoid $P\times P$ with the involution $w$ given by $w(x,y):=(w(y),w(x))$.
Furthermore,
there is a natural isomorphism $i_\sharp P\simeq i_*P$.
We have a similar description of $i_*A$ for any commutative ring $A$,
but $i_\sharp A \not\simeq i_*A$.

If $X$ is a scheme,
then $i_\sharp X$ is the scheme $X\amalg X$ with the involution switching the two components.
\end{exm}

\begin{lem}
\label{drep.6}
Let $P\to Q$ be a map of commutative monoids with involutions.
Then there is a natural equivalence of $\Ztwo$-spectra
\[
\Sphere[P\amalg P]\wedge_{\Sphere[P]}\Sphere[Q]
\simeq
\Sphere[(P\amalg P)\oplus_P Q],
\]
where the involution on $P\amalg P$ in the formulation switches the components.
\end{lem}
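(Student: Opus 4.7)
The plan is to realize both sides as the same pushout in $\NAlg_{\Ztwo}$. First I would pin down the map $\Sphere[P]\to \Sphere[P\amalg P]$ implicit in the relative smash on the left: since $P$ carries its given involution while $P\amalg P$ carries the swap, this map must be $\Sphere[-]$ applied to the diagonal $P\to P\amalg P$ sending $x$ to $(x, \bar x)$, where $\bar{(\cdot)}$ denotes the involution on $P$. This diagonal is the unit of the adjunction $i^*\dashv i_*$ applied to $P$ (using that $i_\sharp\simeq i_*$ for commutative monoids, as in the Example after Definition \ref{thrlog.9}), and is the unique equivariant map into $P\amalg P$ equipped with the swap. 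The pushout $(P\amalg P)\oplus_P Q$ on the right is then formed in the category of commutative monoids with involution with respect to this diagonal and the given $P\to Q$.

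Next I would invoke that the spherical monoid algebra functor, from commutative monoids with involution to $\NAlg_{\Ztwo}$, is a left adjoint and hence preserves pushouts. Applied to the pushout square with vertices $P$, $Q$, $P\amalg P$, and $(P\amalg P)\oplus_P Q$, it yields a pushout square in $\NAlg_{\Ztwo}$, whose lower-right vertex is, by the standard definition of the relative smash as a pushout of rings, exactly $\Sphere[P\amalg P]\wedge_{\Sphere[P]}\Sphere[Q]$. Naturality in $P\to Q$ then follows from functoriality of $\Sphere[-]$ and of the pushout construction.

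The main obstacle is verifying that $\Sphere[-]$ actually preserves pushouts all the way into $\NAlg_{\Ztwo}$, not merely into $\CAlg_{\Ztwo}$, since the normed structure does not commute with all colimits (the text observes that $i_\otimes$ preserves colimits only when the fiber is connected, which fails for $i\colon \pt\to \rB(\Ztwo)$). I would address this by identifying $\Sphere[P\amalg P]\simeq N^{\Ztwo}\Sphere[i^*P]$ and invoking a distributivity relation for the norm with respect to relative smash, in the spirit of Lemma \ref{thrlog.6}; alternatively, one can verify the universal property of the pushout by a direct computation on the level of suspension spectra together with their explicit ring and norm structure.
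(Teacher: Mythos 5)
There is a genuine gap, and it begins with the reading of the statement. In this lemma $P\amalg P$ is \emph{not} the coproduct $P\oplus P\simeq P\times P$ of commutative monoids: it is the disjoint union of two copies of $P$, i.e.\ the free $P$-set on two generators, with the involution switching the two copies. This reading is forced by how the lemma is applied in Propositions \ref{drep.2} and \ref{dih.15}, where a real simplicial object is decomposed degreewise into free $R$-set summands that the involution either preserves or swaps in pairs (``$i_\sharp i^*P\amalg i_\sharp i^*P$ with the switching involution as an $i_\sharp i^*P$-set''), and by the proof's own identification of $\Sphere[P\amalg P]$ with the coinduction $i_*i^*\Sphere[P]$, whose underlying spectrum is $\Sphere[P]\oplus\Sphere[P]$. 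Your identification $\Sphere[P\amalg P]\simeq N^{\Ztwo}\Sphere[i^*P]$ is therefore the wrong one: the norm has underlying spectrum $\Sphere[P]\wedge\Sphere[P]\simeq\Sphere[P\oplus P]$ and is the paper's model for $\Sphere[i_\sharp i^*P]$, a genuinely different object. With the correct identification the argument is short and needs no norm distributivity: the relevant adjoint is $i_*$, not $i_\otimes$, and the projection formula of Lemma \ref{thrlog.6} gives
\[
i_*i^*\Sphere[P]\wedge_{\Sphere[P]}\Sphere[Q]
\simeq
i_*\bigl(i^*\Sphere[P]\wedge_{i^*\Sphere[P]}i^*\Sphere[Q]\bigr)
\simeq
i_*i^*\Sphere[Q]
\simeq
\Sphere[Q\amalg Q],
\]
combined with the evident isomorphism $(P\amalg P)\oplus_P Q\simeq Q\amalg Q$. (Your description of the structure map as the unit of $(i^*,i_*)$, a twisted diagonal, is the one correct ingredient that survives the change of reading.)

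Separately, the step ``$\Sphere[-]$ is a left adjoint and hence preserves pushouts'' would not be sound even under your reading. The functor from discrete commutative monoids with involution to $\NAlg_{\Ztwo}$ (or already to $\CAlg$) does not preserve pushouts: the relative smash product computes a derived pushout, whereas $\oplus_P$ of discrete monoids is underived. For example, the monoid pushout $0\oplus_{\N}0$ is trivial, yet $\Sphere\wedge_{\Sphere[\N]}\Sphere$ is the suspension spectrum of a circle and is not equivalent to $\Sphere$. This failure is precisely why every comparison of this type in the paper is made only after exhibiting the relevant module as (a filtered colimit of) free modules, or by routing through the projection formula as here; any correct proof must supply such a flatness input rather than a formal colimit-preservation argument.
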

\begin{proof}
There is a natural isomorphism of sets with involutions
\[
(P\amalg P) \oplus_P Q
\simeq
Q\amalg Q,
\]
where the involution on the right-hand side switches the components.
By the explicit description of the functors $i^*$ and $i_*$ in terms of orthogonal spectra in \cite[Construction A.2.4]{THR},
we have natural equivalences
\[
\Sphere[P\amalg P]
\simeq
i_*i^*\Sphere[P]
\text{ and }
\Sphere[Q\amalg Q]
\simeq
i_*i^*\Sphere[Q].
\]
Lemma \ref{thrlog.6} yields a natural equivalence
\[
i_*i^*\Sphere[P]
\wedge_{\Sphere[P]}
\Sphere[Q]
\simeq
i_*(i^*\Sphere[P]\wedge_{i^*\Sphere[P]} i^*\Sphere[Q]).
\]
Combine what we have discussed above to obtain the desired equivalence.
\end{proof}

\begin{prop}
\label{drep.2}
Let $P$ be a commutative monoid with involution.
Then there is a natural equivalence of $\Ztwo$-spectra
\[
\Sphere[\Bdi P]
\simeq
\Sphere[P]\wedge_{\Sphere[i_\sharp i^*P]} \Sphere[P],
\]
where the homomorphism $i_\sharp i^*P\to P$ in the formulation is obtained by the counit of the adjunction pair $(i_\sharp,i^*)$.
\end{prop}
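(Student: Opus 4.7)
The plan is to realize $\Bdi P$ as a two-sided bar construction in commutative monoids with involution via the Segal subdivision, and then to apply $\Sphere[-]$; this mirrors the strategy used in the proof of Proposition \ref{drep.1}.

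First I would apply the Segal subdivision $\sd_\sigma$ to $\Ndi P$. Since the $\Ztwo$-realization of a real simplicial set is naturally $\Ztwo$-equivalent to that of its Segal subdivision, it suffices to identify $\sd_\sigma(\Ndi P)$ on the nose. From $(\Ndi P)_q = P^{\times(q+1)}$ one gets $\sd_\sigma(\Ndi P)_q = P^{\times(2q+2)}$, and a direct inspection of the dihedral structure, combined with the reindexing $(x,y)\mapsto(x,\sigma(y))$ on each middle pair of factors, shows that $\sd_\sigma(\Ndi P)$ is isomorphic, as a simplicial object in commutative monoids with involution, to the two-sided bar construction
\[
B_\bullet(P,\; i_\sharp i^* P,\; P)
\]
with both outer face maps given by the counit $\epsilon\colon i_\sharp i^* P\to P$ of the adjunction $(i_\sharp, i^*)$. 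The reindexing above is precisely what converts the twisted-swap involution $(x,y)\mapsto (\sigma(y),\sigma(x))$ arising from the real structure on $\Delta^1_\sigma$ (Definition \ref{drep.5}) into the plain swap on $i_\sharp i^* P\cong P\amalg P$.

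Next, applying $\Sphere[-]$ levelwise from commutative monoids with involution to $\NAlg_{\Ztwo}$ converts coproducts into $\wedge$-products, so the resulting simplicial $\Ztwo$-spectrum is the standard two-sided bar construction whose realization is $\Sphere[P]\wedge_{\Sphere[i_\sharp i^* P]}\Sphere[P]$. Passing to realizations and using the natural equivalence $|\Sphere[\sd_\sigma\Ndi P]|\simeq \Sphere[\Bdi P]$ yields the claimed equivalence.

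The main obstacle is the combinatorial identification in the second paragraph: tracking how the cyclic permutation and the involution on the dihedral nerve interact with $\sd_\sigma$. I expect this to reduce, through the same decomposition $\sd_\sigma(\Delta^1_\sigma)\cong \Delta^1\cup_{\{1\}}\Delta^1$ used in the proof of Proposition \ref{drep.1}, to the classical non-equivariant identification of the subdivided cyclic nerve with $B_\bullet(P,P\amalg P, P)$, augmented by the equivariant bookkeeping described above.
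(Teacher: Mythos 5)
Your proposal is correct in outline but follows a genuinely different route from the paper. The paper never subdivides the dihedral nerve itself: it exhibits $\Ndi P$ directly as a pushout $(P\otimes \Delta_\sigma^1)\oplus_{i_\sharp i^*P} P$ of real simplicial commutative monoids, where the cylinder $P\otimes\Delta_\sigma^1$ (Definition \ref{drep.5}) serves as a ``free'' replacement of $P$ over $i_\sharp i^*P$ via $(x,y)\mapsto(x,0,\ldots,0,y)$; the degreewise freeness feeds into Lemma \ref{drep.6} to turn the pushout into a relative smash product after applying $\Sphere[-]$, and Proposition \ref{drep.1} then collapses the cylinder factor. The Segal subdivision appears only inside the proof of Proposition \ref{drep.1}. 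Your route instead subdivides $\Ndi P$ and identifies $\sd_\sigma(\Ndi P)$ with the two-sided bar construction $B_\bullet(P,i_\sharp i^*P,P)$, which is the classical DMPR/H{\o}genhaven-style argument that the paper alludes to right after \eqref{dih.22.1} minus one. What the paper's approach buys is that the only combinatorics to check is the single pushout isomorphism \eqref{drep.2.2}, which holds degreewise for trivial reasons; what yours buys is a direct simplicial resolution realizing the relative smash product, with no appeal to the contractibility of a cylinder. Two points to watch when you write out the details: (i) the identification of the realization of $B_\bullet(\Sphere[P],\Sphere[i_\sharp i^*P],\Sphere[P])$ with the coproduct $\Sphere[P]\wedge_{\Sphere[i_\sharp i^*P]}\Sphere[P]$ in $\NAlg_{\Ztwo}$ still needs the same flatness input that the paper isolates in Lemma \ref{drep.6}, so you have not actually avoided that lemma; (ii) with the paper's conventions the involution on $i_\sharp i^*P$ is already the twisted swap $(x,y)\mapsto(\sigma y,\sigma x)$ and the counit is $(x,y)\mapsto x+y$, and a direct computation shows the subdivided involution lands on exactly this twisted swap on each middle pair without reordering the pairs --- so your proposed reindexing $(x,y)\mapsto(x,\sigma y)$ is unnecessary and would in fact turn the outer face maps into $(x,y)\mapsto x+\sigma y$, which is no longer the counit.
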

\begin{proof}
There is an isomorphism of real simplicial sets
\begin{equation}
\label{drep.2.2}
(P\otimes \Delta_\sigma^1)\oplus_{i_\sharp i^*P} P
\simeq
\Ndi P,
\end{equation}
where the map $i_\sharp i^*P\to P\otimes \Delta_\sigma^1$ is given by $(x,y)\mapsto (x,0,\ldots,0,y)$ in simplicial degree $q$.
The composite
\[
i_\sharp i^*P \to P\otimes \Delta_\sigma^1 \to P
\]
coincides with the counit homomorphism.
Since $P\otimes \Delta_\sigma^1$ is degreewise the disjoint union of finitely many copies of $i_\sharp i^* P$ and $i_\sharp i^*P \amalg i_\sharp i^*P$ with the switching involution as an $i_\sharp i^*P$-set,
Lemma \ref{drep.6} yields a natural equivalence
\[
\Sphere[P\otimes \Delta_\sigma^1]
\wedge_{\Sphere[i_\sharp i^* P]}
\Sphere[P]
\simeq
\Sphere[(P\otimes \Delta_\sigma^1)\oplus_{i_\sharp i^* P} P].
\]
Use Proposition \ref{drep.1} and \eqref{drep.2.2} to finish the proof.
\end{proof}

For a commutative monoid $P$,
let $P^\gp$ denote its group completion.

\begin{df}
\label{dih.13}
Let $P$ be a commutative monoid with involution.
The \emph{dihedral replete nerve of $P$} is the dihedral set
\begin{equation}
\label{dih.13.1}
\Ndrep P
:=
\Ndi P^\gp \times_{P^\gp} P,
\end{equation}
where the map $\Ndi P^\gp\to P^\gp$ in this formulation is given by \eqref{drep.2.1} for $P^\gp$.
The \emph{dihedral replete bar construction of $P$}, denoted $\Bdrep P$,
is the dihedral geometric realization of $\Ndrep P$.

We obtain the replete bar construction $\Bdi i^*P$ \cite[Definition 3.16]{Rog09} of $i^*P$ if we forget the involution structure on $\Bdrep P$.
\end{df}

\begin{prop}
\label{drep.4}
Let $P$ and $Q$ be commutative monoids with involutions.
Then there is a natural isomorphism of dihedral sets
\[
\Ndrep (P\times Q)
\simeq
\Ndrep P \times \Ndrep Q.
\]
\end{prop}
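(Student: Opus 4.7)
The plan is to unwind the definition $\Ndrep P := \Ndi P^\gp \times_{P^\gp} P$ and reduce the statement to three elementary facts: group completion commutes with finite products, the dihedral nerve commutes with finite products, and pullbacks commute with products in the category of dihedral sets.

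First I would observe that the coproduct of commutative monoids with involutions agrees (via the canonical comparison) with the product, and that group completion is a left adjoint so it commutes with the coproduct; hence $(P\times Q)^\gp \simeq P^\gp \times Q^\gp$ as commutative groups with involution.

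Next, I would verify that $\Ndi$ commutes with products of commutative monoids with involutions. In simplicial degree $q$, one has
\[
\Ndi(P\times Q)_q = (P\times Q)^{\times(q+1)} \simeq P^{\times(q+1)}\times Q^{\times(q+1)} = (\Ndi P)_q\times(\Ndi Q)_q,
\]
and this identification is compatible with the face and degeneracy maps (which are induced by the commutative monoid structure, coordinate by coordinate) as well as with the dihedral operators $t_q$ and $w_q$ (which are given by cyclic permutation and order reversal together with the involution on the factors, again coordinate by coordinate). So $\Ndi(P\times Q)\simeq \Ndi P\times \Ndi Q$ as dihedral sets, and similarly for $P^\gp\times Q^\gp$.

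Combining these, and using the fact that in the ordinary category of (dihedral) sets the fibre product commutes with the cartesian product, I obtain the chain of natural isomorphisms of dihedral sets
\[
\Ndrep(P\times Q)
\simeq \Ndi(P^\gp\times Q^\gp)\times_{P^\gp\times Q^\gp}(P\times Q)
\simeq (\Ndi P^\gp\times \Ndi Q^\gp)\times_{P^\gp\times Q^\gp}(P\times Q),
\]
which factors as the product of the two individual pullbacks
\[
(\Ndi P^\gp\times_{P^\gp}P)\times (\Ndi Q^\gp\times_{Q^\gp}Q)
= \Ndrep P\times \Ndrep Q,
\]
concluding the proof. There is no real obstacle: the only point that needs a moment's care is checking compatibility of the product decomposition of $\Ndi$ with the dihedral operators $t_q$ and $w_q$, but since both are defined coordinate-wise on the tuples $(x_0,\ldots,x_q)$, this is routine.
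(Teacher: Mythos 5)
Your proof is correct and follows the same route as the paper: identify $(P\times Q)^\gp$ with $P^\gp\times Q^\gp$, use that $\Ndi$ commutes with finite products (which the paper cites from [THR, Proposition 4.2.4] rather than verifying degreewise), and then factor the fibre product. No issues.
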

\begin{proof}
By \cite[Proposition 4.2.4]{THR},
we have a natural isomorphism of dihedral sets
\[
\Ndi (P^\gp \times Q^\gp) \times_{P^\gp \times Q^\gp} (P\times Q)
\simeq
(\Ndi P^\gp \times \Ndi Q^\gp) \times_{P^\gp \times Q^\gp} (P\times Q).
\]
The left-hand side is isomorphic to $\Ndrep (P\times Q)$,
and the right-hand side is isomorphic to $\Ndrep P \times \Ndrep Q$.
\end{proof}

For a commutative monoid $P$ with involution,
let $N^\sigma P$ denote the \emph{real nerve of $P$},
see \cite[Definition 4.2.1]{THR}.

\begin{prop}
\label{dih.25}
Let $P$ be a commutative monoid with involution.
Then there is a natural isomorphism of dihedral sets
\[
\Ndrep P
\simeq
P\times \Nsigma P^\gp.
\]
\end{prop}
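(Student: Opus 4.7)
The plan is to first establish a natural isomorphism of dihedral sets $\Ndi G \simeq G \times \Nsigma G$ for every commutative group $G$ with involution, where the left-hand factor $G$ carries the $\sigma$-involution together with the constant (degreewise identity) simplicial structure, and then to base-change along the group completion map $P \to P^\gp$.

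First I would define a projection $\Ndi G \to G$ in each simplicial degree by the summation map $(x_0, \ldots, x_q) \mapsto x_0 + \cdots + x_q$, exactly as in \eqref{drep.2.1}. Commutativity of $G$ makes this sum invariant under the cyclic rotation and under all face and degeneracy maps (which either collapse adjacent entries or insert zeros), while the real involution on $\Ndi G$ sends the sum to its $\sigma$-image. Hence summation upgrades to a map of dihedral sets with target the constant dihedral set $G$ equipped with the $\sigma$-involution.

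Next, since $G$ is a group, the assignment $(x_0, x_1, \ldots, x_q) \leftrightarrow (x_0 + \cdots + x_q,\, x_1, \ldots, x_q)$ is a bijection in each simplicial degree, so the projection admits a set-theoretic section and identifies the fiber over $0$ with $G^{\times q}$ degreewise. I would then verify that the induced face, degeneracy, cyclic and real maps on these residual coordinates match the definition of $\Nsigma G$ in \cite[Definition 4.2.1]{THR}; this produces the desired isomorphism of dihedral sets $\Ndi G \simeq G \times \Nsigma G$. Specializing to $G = P^\gp$ and using the definition \eqref{dih.13.1}, we get
\[
\Ndrep P = \Ndi P^\gp \times_{P^\gp} P \simeq (P^\gp \times \Nsigma P^\gp) \times_{P^\gp} P \simeq P \times \Nsigma P^\gp,
\]
where the last isomorphism uses that the map $P^\gp \times \Nsigma P^\gp \to P^\gp$ is the projection onto the first factor.

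The main obstacle is the second step: carefully matching the full dihedral structure on the complementary factor against the conventions defining $\Nsigma P^\gp$ in \cite{THR}. This is essentially a bookkeeping verification, but it must be done with care because the real structure on $\Ndi$ simultaneously reverses the order of the entries and applies $\sigma$, and these operations interact nontrivially with the proposed coordinate change; everything else in the argument is a direct consequence of commutativity and the group structure on $P^\gp$.
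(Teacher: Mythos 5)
Your proposal is correct and follows the same route as the paper: the paper's proof is a one-line citation of \cite[Proposition 4.2.6]{THR}, which is precisely the splitting $\Ndi G\simeq G\times \Nsigma G$ for a commutative group $G$ with involution (compatible with the summation map to $G$), followed by the same base change along $P\to P^\gp$ using \eqref{dih.13.1}. The only difference is that you sketch a proof of the cited splitting rather than invoking it, and your sketch (summation projection plus the coordinate change $(x_0,\ldots,x_q)\mapsto(x_0+\cdots+x_q,x_1,\ldots,x_q)$) is the standard argument.
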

\begin{proof}
Immediate from \cite[Proposition 4.2.6]{THR} and \eqref{dih.13.1}.
\end{proof}

\begin{df}
\label{dih.14}
For a commutative monoid $P$ with involution $w$,
let $(i_\sharp i^*P)^{ex}$ denote the commutative monoid $P\oplus P^\gp$ with the involution $w$ given by
\[
w(x,y):=(w(x),w(x)-w(y)).
\]
We have the commutative triangle
\[
\begin{tikzcd}
& (i_\sharp i^*P)^{ex}\ar[rd,"\mu^{ex}"]
\\
i_\sharp i^*P\ar[ru,"\gamma"]\ar[rr,"\mu"]&
&
P
\end{tikzcd}
\]
such that $\mu(x,y):=x+y$, $\mu^{ex}(x,y):=x$, and $\gamma(x,y):=(x+y,y)$ for $x,y\in P$.
We note that $\mu$ is the counit homomorphism.
The construction of $(i_\sharp i^*P)^{ex}$ is an equivariant analogue of the exactification in \cite[Proposition I.4.2.19]{Ogu}.
\end{df}

\begin{prop}
\label{dih.15}
Let $P$ be a commutative monoid with involution.
Then there is a natural equivalence of $\Ztwo$-spectra
\[
\Sphere[\Bdrep P]
\simeq
\Sphere[P]\wedge_{\Sphere[(i_\sharp i^*P)^{ex}]} \Sphere[P].
\]
\end{prop}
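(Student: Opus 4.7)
The plan is to adapt the proof of Proposition \ref{drep.2} by replacing the cylinder $P\otimes \Delta_\sigma^1$ with an exactified variant. Define the real simplicial commutative monoid
\[
Y:=(P^\gp\otimes \Delta_\sigma^1)\times_{P^\gp}P,
\]
where $P^\gp\otimes \Delta_\sigma^1\to P^\gp$ is the summing map of Proposition \ref{drep.1} applied to $P^\gp$. Equip $Y$ with an end-point inclusion $(i_\sharp i^*P)^{ex}\to Y$ sending $(a,b)\in P\oplus P^\gp$, in simplicial degree $q$, to the element $((a-b,0,\ldots,0,b),a)\in (P^\gp)^{\oplus(q+2)}\times_{P^\gp}P$ (the sum of the first coordinate is $a\in P$, so this lies in $Y_q$), and with the projection $Y\to P$ onto the second factor; the composite $(i_\sharp i^*P)^{ex}\to Y\to P$ then agrees with $\theta^{ex}$.

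First I would establish an isomorphism of real simplicial sets
\[
Y\oplus_{(i_\sharp i^*P)^{ex}}P\xrightarrow{\simeq}\Ndrep P,
\]
the exactified analog of the isomorphism $(P\otimes \Delta_\sigma^1)\oplus_{i_\sharp i^*P}P\simeq \Ndi P$ used in Proposition \ref{drep.2}. A direct calculation in each simplicial degree shows that the pushout on the left reduces to $(P^\gp)^{\oplus q}\oplus P$ in degree $q$: one kills the trailing $P^\gp$-coordinate of $Y_q$ using $b\in P^\gp$ and merges the two $P$-factors using $a\in P$. This matches the free parameterization $(\Ndrep P)_q=(P^\gp)^{\oplus(q+1)}\times_{P^\gp}P\simeq (P^\gp)^{\oplus q}\oplus P$.

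Next I would show that $Y\to P$ is a $\Ztwo$-weak equivalence by imitating the Segal subdivision argument of Proposition \ref{drep.1}. Applying $\sd_\sigma$ produces the split surjection $(P^\gp\otimes \Delta^1)\oplus_{P^\gp}(P^\gp\otimes \Delta^1)\to P^\gp$ of simplicial abelian groups with switching involution, a Kan fibration with weakly contractible fibers; pulling back along $P\to P^\gp$ preserves the underlying weak equivalence, and the switching involution then promotes this to the desired $\Ztwo$-equivalence for $Y$.

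Finally, in each simplicial degree $Y_q$ decomposes, as an $(i_\sharp i^*P)^{ex}$-set, into a disjoint union of copies of $(i_\sharp i^*P)^{ex}$ (with trivial involution) and $(i_\sharp i^*P)^{ex}\amalg (i_\sharp i^*P)^{ex}$ (with switching involution), the orbits being parameterized by the involution on $(P^\gp)^{\oplus q}$ induced by the real structure. Applying Lemma \ref{drep.6} degreewise (as in the proof of Proposition \ref{drep.2}) yields a natural equivalence of simplicial $\Ztwo$-spectra
\[
\Sphere[Y]\wedge_{\Sphere[(i_\sharp i^*P)^{ex}]}\Sphere[P]\simeq \Sphere[Y\oplus_{(i_\sharp i^*P)^{ex}}P]\simeq \Sphere[\Ndrep P];
\]
combining with $\Sphere[Y]\simeq \Sphere[P]$ from the previous step and passing to dihedral geometric realizations finishes the proof. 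The hard part will be this degreewise orbit analysis together with the book-keeping of involutions -- the asymmetry between the $P$ and $P^\gp$ factors of $(i_\sharp i^*P)^{ex}$ must be checked to interact correctly with the reversal involution on the middle coordinates so as to produce exactly the trivial and switching orbit types to which Lemma \ref{drep.6} applies.
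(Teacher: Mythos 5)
Your argument is essentially the paper's: the paper works with a resolution $Q$ whose underlying simplicial set is $P\times \rE P^\gp$ equipped with a twisted involution, which is your $Y=(P^\gp\otimes\Delta_\sigma^1)\times_{P^\gp}P$ in different coordinates, and it likewise presents the replete nerve as a pushout along $(i_\sharp i^*P)^{ex}$, proves the resolution is $\Ztwo$-equivalent to $P$ by the same Segal-subdivision fixed-point computation, and applies Lemma \ref{drep.6} degreewise via the same orbit decomposition (including the norm-equation check that every involution-fixed free orbit has a fixed point). The only cosmetic difference is that the paper routes through Proposition \ref{dih.25}, identifying $\Ndrep P$ with $P\times\Nsigma P^\gp$ first, rather than mapping the pushout directly onto $\Ndrep P$.
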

\begin{proof}
Let $\rE P^\gp$ denote the total simplicial set of $P^\gp$,
and let $Q$ be the real simplicial set whose underlying simplicial set is $P\times \rE P^\gp$,
and whose involution is given by
\[
(x,g_0,\ldots,g_q)\in P\times (P^\gp)^{\times(q+1)}
\mapsto
(w(x),w(x)-w(g_q),\ldots,w(x)-w(g_0))
\]
in simplicial degree $q$ for every integer $q\geq 0$.
Consider the maps of real simplicial sets
\[
(i_\sharp i^* P)^{ex} \to Q \to P\times \Nsigma P^\gp
\]
given by
$
(x,g)
\mapsto
(x,g,\ldots,g)
$
and
$
(x,g_0,\ldots,g_q)
\mapsto
(x,g_1-g_0,\ldots,g_q-g_{q-1})
$
in simplicial degree $q$.

There is a real simplicial isomorphism
\[
Q \oplus_{(i_\sharp i^*P)^{ex}} P
\simeq
P\times \Nsigma P^\gp.
\]
Since $Q$ is degreewise the disjoint union of finite copies of $(i_\sharp i^*P)^{ex}$ and $(i_\sharp i^*P)^{ex}\amalg (i_\sharp i^*P)^{ex}$ with the switching involution as an $(i_\sharp i^*P)^{ex}$-set, Lemma \ref{drep.6} yields a natural equivalence of $\Ztwo$-spectra
\begin{equation}
\label{dih.15.1}
\Sphere[Q] \wedge_{\Sphere[(i_\sharp i^*P)^{ex}]} \Sphere[P]
\simeq
\Sphere[P\times \Nsigma P^\gp].
\end{equation}

Let us show that the map $Q\to P$ given by
\[
(x,g_0,\ldots,g_q)\mapsto x
\]
in simplicial degree $q$ is a $\Ztwo$-homotopy equivalence. Its underlying map of simplicial sets is the projection $P\times \rE P^\gp \to P$, which is a homotopy equivalence.
The $\Ztwo$-fixed point of the Segal subdivision $\sd_{\sigma} Q$ is in simplicial degree $q$ the set
\[
\{
(x,g_0,\ldots,g_q,w(x)-w(g_q),\ldots,w(x)-w(g_0))
:
x\in P^{\Ztwo},g_0,\ldots,g_q\in P^\gp
\}.
\]
From this description,
we see that the induced map
\[
(\sd_{\sigma} Q)^{\Ztwo}
\to
(\sd_{\sigma} P)^{\Ztwo}
\]
can be identified with the projection $P\times \rE P^\gp \to P$.
Hence the map $Q\to P$ is a $\Ztwo$-homotopy equivalence.
Combine this with Proposition \ref{dih.25} and \eqref{dih.15.1} to obtain the desired equivalence.
\end{proof}

\begin{rmk}
For a commutative monoid $P$ with involution,
we can regard $\Sphere[\Bdi P]$ and $\Sphere[\Bdrep P]$ as objects of $\NAlg^{\Ztwo}$ by Propositions \ref{drep.2} and \ref{dih.15}.
\end{rmk}

\begin{prop}
\label{drep.3}
Let $P$ be a commutative monoid with involution.
Then there are natural equivalences of $\Ztwo$-spectra
\[
i^*\Sphere[\Bdi P]
\simeq
\Sphere[\Bcy i^*P]
\text{ and }
i^*\Sphere[\Bdrep P]
\simeq
\Sphere[\Brep i^*P].
\]
\end{prop}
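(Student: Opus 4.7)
The plan is to observe that the dihedral nerve $\Ndi P$ and the dihedral replete nerve $\Ndrep P$, viewed as real simplicial sets, degenerate to the cyclic nerve $\Ncy i^*P$ and the replete nerve of $i^*P$ upon forgetting the $\Ztwo$-action, and then to transport this through the geometric realization and $\Sigma^\infty_+$. Indeed, the passage preceding Proposition \ref{drep.2} and the closing sentence of Definition \ref{dih.13} already record these simplicial identifications. Taking geometric realizations produces the same identification between the underlying spaces of $\Bdi P$ (resp.\ $\Bdrep P$) and $\Bcy i^*P$ (resp.\ $\Brep i^*P$). Since $i^*\colon \Sp_{\Ztwo} \to \Sp$ is the forgetful functor, and $\Sphere[X] := \Sigma^\infty X_+$ is defined compatibly in both the equivariant and non-equivariant settings, both stated equivalences follow immediately, and naturality in $P$ is automatic.

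As a cross-check aligned with the framing of Propositions \ref{drep.2} and \ref{dih.15}, one can alternatively apply $i^*$ to the relative smash product descriptions proved there, use the symmetric monoidality of $i^*\colon \NAlg_{\Ztwo} \to \CAlg$ recalled earlier in the section to push $i^*$ past the relative smash product, and identify $i^*(i_\sharp i^*P) \simeq i^*P \oplus i^*P$ together with $i^*(i_\sharp i^*P)^{ex} \simeq i^*P \oplus (i^*P)^\gp$ via the example following Definition \ref{thrlog.9}. The resulting relative smash products are the non-equivariant analogues of those propositions, which identify with $\Sphere[\Bcy i^*P]$ and $\Sphere[\Brep i^*P]$ by a Rognes-type computation for the bar construction of a commutative monoid. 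The only genuine obstacle is bookkeeping: one must verify that the two base-change maps out of $\Sphere[i^*P \oplus i^*P]$ (resp.\ $\Sphere[i^*P \oplus (i^*P)^\gp]$) coming from the equivariant and non-equivariant sides coincide, but this is a routine diagram chase given the naturality of the counit $i_\sharp i^*P \to P$ and of the exactification map.
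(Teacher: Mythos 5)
Your first paragraph is exactly the paper's argument: the cyclic (resp.\ replete) bar construction is obtained from the dihedral (resp.\ dihedral replete) one by forgetting the involution, and $i^*\Sphere[-]$ is compatible with this. The proposal is correct and takes essentially the same approach; the second paragraph is an unneeded but harmless cross-check.
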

\begin{proof}
Observe that we obtain $\Bcy i^*P$ (resp.\ $\Brep i^*P$) by forgetting the involution structure on $\Bdi P$ (resp.\ $\Bdrep P$).
\end{proof}

\section{THR of log rings with involutions}
\label{thrring}

For $R\in \NAlg^{\Ztwo}$,
the \emph{real topological Hochschild homology of $R$} is defined to be
\[
\THR(R)
:=
R\wedge_{N^{\Ztwo}i^*R}R,
\]
where the maps $N^{\Ztwo}i^*R \to R$ in the formulation are the counit of the adjunction pair $(N^{\Ztwo},i^*)$.
If $A$ is a commutative ring with involution,
then
the equivariant Eilenberg-MacLane spectrum $\EM A$ can be realized as a commutative orthogonal $\Z/2$-ring spectrum by \cite[Example 11.12]{Schwede},
so we have $\EM A\in \NAlg^{\Z/2}$ together with \cite[Remark A.2.3]{THR}.
We set $\THR(A):=\THR(\EM A)$.
For a commutative monoid $P$ with involution,
there is a natural equivalence of $\Ztwo$-spectra
\begin{equation}
\THR(\Sphere[P])
\simeq
\Sphere[\Bdi P],
\end{equation}
see \cite{Hog} and also  \cite[Proposition 5.9]{DMPR21}.
One can also show this using Proposition \ref{drep.2} and the natural equivalence of $\Ztwo$-spectra $N^{\Ztwo} i^*\Sphere[P]\simeq \Sphere[i_\sharp i^*P]$.

Recall from \cite[Definition III.1.2.3]{Ogu} that a \emph{log ring}\footnote{A log ring is often called a \emph{pre-log ring} in the literature, e.g.,  \cite[Definition 2.1]{Rog09}.}  $(A,P)$ is a commutative ring $A$ equipped with a homomorphism $P\to A$ of commutative monoids,
where the monoid operation on $A$ is the multiplication.
A \emph{homomorphism of log rings $(A,P)\to (B,Q)$} is a pair of homomorphisms $A\to B$ and $P\to Q$ such that the square
\[
\begin{tikzcd}
P\ar[d]\ar[r]&
A\ar[d]
\\
Q\ar[r]&
B
\end{tikzcd}
\]
commutes.
We regard a commutative ring $A$ as a log ring $(A,\{1\})$.

Rognes \cite[Definition 8.11, Remark 8.12]{Rog09} 
defined the \emph{topological Hochschild homology of a log ring $(A,P)$} as the coproduct in $\CAlg$
\begin{equation}
\THH(A,P)
:=
\THH(A)\wedge_{\Sphere[\Bcy P]}\Sphere[\Brep P],
\end{equation}
where the map $\Sphere[\Bcy P]\simeq \THH(\Sphere[P])\to \THH(A)$ in the formulation is obtained by applying $\THH$ to the induced map $\Sphere[P]\to \EM A$.

\begin{df}
\label{dih.22}
The \emph{real topological Hochschild homology of a log ring $(A,P)$ with involution} is the coproduct in $\NAlg^{\Ztwo}$
\begin{equation}
\label{dih.22.1}
\THR(A,P)
:=
\THR(A)\wedge_{\Sphere[\Bdi P]}\Sphere[\Bdrep P],
\end{equation}
where the map $\Sphere[\Bdi P]\simeq \THR(\Sphere[P])\to \THR(A)$ in this formulation is obtained by applying $\THR$ to the induced map $\Sphere[P]\to \EM A$.
\end{df}

We obviously have $\THR(A,\{1\})\simeq\THR(A)$ for every commutative ring $A$.

\begin{df}
Let $P\to M$ be a homomorphism of commutative monoids with involutions.
For notational convenience,
we set
\[
\THR(\Sphere[M],P)
:=
\Sphere[\Bdi M]\wedge_{\Sphere[\Bdi P]}\Sphere[\Bdrep P].
\]
\end{df}

\begin{prop}
\label{thrlog.8}
Let $Q\to M$ be a homomorphism of commutative monoids with involutions.
Then there is a natural equivalence of $\Ztwo$-spectra
\[
\THR(A[M],P\oplus Q)
\simeq
\THR(A,P) \wedge \THR(\Sphere[M],Q).
\]
\end{prop}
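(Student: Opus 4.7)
The plan is to unwind the definition of $\THR(A[M],P\oplus Q)$ and reduce the claim to three ingredients: the splittings of the dihedral and dihedral replete nerves under coproducts of monoids, the fact that $\THR$ preserves coproducts in $\NAlg_{\Ztwo}$, and an equivalence $\EM(A[M])\simeq \EM A\wedge \Sphere[M]$ in $\NAlg_{\Ztwo}$.

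For the ring-spectrum input, the natural $\E_\infty$-ring map $\EM A\wedge \Sphere[M]\to \EM(A[M])$ is an equivalence: since $M$ is a discrete monoid, $\Sphere[M]\simeq \bigvee_{m\in M}\Sphere$, so $\EM A\wedge \Sphere[M]\simeq \bigvee_{M}\EM A$ has homotopy concentrated in degree zero, equal to $A[M]$, and the map is the identity there. For $\THR$ and coproducts, I would combine the fact that $N^{\Ztwo}$ is the left adjoint of the symmetric monoidal forgetful functor $i^*\colon\NAlg_{\Ztwo}\to\CAlg$ (so $N^{\Ztwo}i^*$ preserves $\wedge$, which is the coproduct on both sides) with the base-change identity $(R\wedge S)\wedge_{R'\wedge S'}(R\wedge S)\simeq (R\wedge_{R'}R)\wedge (S\wedge_{S'}S)$ to obtain $\THR(R\wedge S)\simeq \THR(R)\wedge \THR(S)$ in $\NAlg_{\Ztwo}$. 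Therefore
\[
\THR(A[M])\simeq \THR(A)\wedge \THR(\Sphere[M])\simeq \THR(A)\wedge \Sphere[\Bdi M].
\]

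For the nerves, Proposition \ref{drep.4} and \cite[Proposition 4.2.4]{THR} yield the isomorphisms $\Sphere[\Bdrep(P\oplus Q)]\simeq \Sphere[\Bdrep P]\wedge \Sphere[\Bdrep Q]$ and $\Sphere[\Bdi(P\oplus Q)]\simeq \Sphere[\Bdi P]\wedge \Sphere[\Bdi Q]$. Substituting into Definition \ref{dih.22} and rearranging via associativity of the smash product gives
\begin{align*}
\THR(A[M],P\oplus Q)
&\simeq (\THR(A)\wedge \Sphere[\Bdi M])\wedge_{\Sphere[\Bdi P]\wedge \Sphere[\Bdi Q]}(\Sphere[\Bdrep P]\wedge \Sphere[\Bdrep Q])\\
&\simeq \bigl(\THR(A)\wedge_{\Sphere[\Bdi P]}\Sphere[\Bdrep P]\bigr)\wedge \bigl(\Sphere[\Bdi M]\wedge_{\Sphere[\Bdi Q]}\Sphere[\Bdrep Q]\bigr)\\
&=\THR(A,P)\wedge \THR(\Sphere[M],Q).
\end{align*}

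The main obstacle is the bookkeeping of the various structure maps. One needs to verify that, under the equivalence $\THR(A[M])\simeq \THR(A)\wedge \Sphere[\Bdi M]$, the base-change map from $\Sphere[\Bdi(P\oplus Q)]\simeq \Sphere[\Bdi P]\wedge \Sphere[\Bdi Q]$ decomposes as the smash of the canonical $\Sphere[\Bdi P]\to \THR(A)$ (induced by $P\to A$) with $\Sphere[\Bdi Q]\to \Sphere[\Bdi M]$ (induced by $Q\to M$). This is a naturality statement reflecting the fact that the monoid homomorphism $P\oplus Q\to A[M]$ factors as the componentwise map $P\oplus Q\to A\oplus M$ followed by the multiplication $A\oplus M\to A[M]$.
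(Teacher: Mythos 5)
Your proposal follows the same route as the paper's (very terse) proof: decompose $\Ndi(P\oplus Q)$ and $\Ndrep(P\oplus Q)$ as products via \cite[Proposition 4.2.4]{THR} and Proposition \ref{drep.4}, apply $\Sphere[-]$, and rearrange the relative smash products in Definition \ref{dih.22}. You correctly identify the two implicit inputs the paper does not spell out, namely that $\THR$ preserves coproducts in $\NAlg_{\Ztwo}$ (via the left adjointness of $N^{\Ztwo}$ and the colimit-preservation of $i^*\colon\NAlg_{\Ztwo}\to\CAlg$) and that $\EM(A[M])\simeq \EM A\wedge\Sphere[M]$, and your bookkeeping of the structure maps at the end is exactly the naturality one has to check.

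The one soft spot is your justification of $\EM(A[M])\simeq \EM A\wedge\Sphere[M]$: computing the homotopy of the underlying spectrum only shows that the map is an equivalence after applying $i^*$, and for genuine $\Ztwo$-spectra this is not enough --- an underlying equivalence need not be an equivalence in $\Sp_{\Ztwo}$. Since the pair $(i^*,\Phi^{\Ztwo})$ is conservative, you would also need to check the map on geometric fixed points (or on genuine fixed points, using the Mackey-functor description of $\EM$ of a ring with involution), or simply cite the corresponding statement from \cite{THR}/\cite{DMPR21}. This is a fixable gap in a standard fact rather than a flaw in the strategy.
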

\begin{proof}
By \cite[Proposition 4.2.4]{THR} and Proposition \ref{drep.4},
there are natural isomorphisms of dihedral sets
\[
\Ndi (P\oplus Q)
\simeq
\Ndi P \times \Ndi Q
\text{ and }
\Ndrep (P\oplus Q)
\simeq
\Ndrep P \times \Ndrep Q.
\]
Apply $\Sphere[-]$ to these,
and use \eqref{dih.22.1} to obtain the desired equivalence.
\end{proof}

Let $S^\sigma$ be $S^1$ with the involution given by $e^{i\theta}\in S^1 \mapsto e^{-i\theta}$.

\begin{exm}
\label{thrlog.10}
By \cite[(4.12)]{THR},
we have an equivalence of $\Ztwo$-spectra
\[
\THR(\Sphere[\N],\N)
\simeq
\bigoplus_{d=0}^\infty \Sphere[S^\sigma].
\]
Furthermore,
\cite[Propositions 4.2.11, 4.2.12]{THR} implies that the induced map
\[
\THR(\Sphere[\N])
\to
\THR(\Sphere[\N],\N)
\]
can be written as the componentwise induced map
\[
\Sphere\oplus \bigoplus_{d=1}^\infty \Sphere[S^\sigma]
\to
\bigoplus_{d=0}^\infty \Sphere[S^\sigma].
\]
\end{exm}

In the remaining part of this section,
we investigate how $\THH$ and $\THR$ interact with the functors $i^*$ and $i_*$.

\begin{prop}
\label{thrlog.1}
Let $(A,P)$ be a log ring with involution.
Then there is a natural equivalence of spectra
\[
i^*\THR(A,P)
\simeq
\THH(i^*A,i^*P).
\]
\end{prop}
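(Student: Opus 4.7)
The plan is to apply the forgetful functor $i^{*}$ to the defining relative smash product \eqref{dih.22.1} and identify each factor separately. Since the underlying functor $i^{*}\colon \Sp_{\Ztwo}\to \Sp$ is a strong symmetric monoidal left adjoint (as recorded in \cite[Appendix A.2]{THR}), it commutes with colimits and, in particular, with relative smash products of $\E_\infty$-rings. Hence we obtain a natural equivalence of spectra
\[
i^{*}\THR(A,P)
\simeq
i^{*}\THR(A)\wedge_{i^{*}\Sphere[\Bdi P]}i^{*}\Sphere[\Bdrep P].
\]

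It then suffices to identify each of the three factors. For the two bar-construction factors, Proposition \ref{drep.3} immediately yields $i^{*}\Sphere[\Bdi P]\simeq \Sphere[\Bcy i^{*}P]$ and $i^{*}\Sphere[\Bdrep P]\simeq \Sphere[\Brep i^{*}P]$. For the first factor I would establish the natural equivalence
\[
i^{*}\THR(A)\simeq \THH(i^{*}A).
\]
Unwinding the definition $\THR(A)=A\wedge_{N^{\Ztwo}i^{*}A}A$ and commuting $i^{*}$ past the relative smash product reduces this to a natural equivalence $i^{*}N^{\Ztwo}B\simeq B\wedge B$ for $B\in \CAlg$. The latter follows by applying the base-change formula \cite[Proposition A.1.9]{THR} to the cartesian square of finite groupoids
\[
\begin{tikzcd}
\Ztwo\ar[d,"q"']\ar[r,"q"]&\pt\ar[d,"i"]\\
\pt\ar[r,"i"]&\rB (\Ztwo)
\end{tikzcd}
\]
already used in the proof of Lemma \ref{thrlog.6}, giving $i^{*}N^{\Ztwo}=i^{*}i_{\otimes}\simeq q_{\otimes}q^{*}$. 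Since $q^{*}\colon \CAlg\to \NAlg(\SH(\Ztwo))\simeq \CAlg\times \CAlg$ is the diagonal and $q_{\otimes}\colon \CAlg\times \CAlg\to \CAlg$ is the wedge product, this composite is $B\mapsto B\wedge B$, as required.

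Substituting these three identifications into the first display recovers exactly the defining formula of $\THH(i^{*}A,i^{*}P)$, and naturality in $(A,P)$ is automatic since every input is natural. The only nontrivial ingredient is the compatibility of $i^{*}$ with the relative smash product in $\NAlg_{\Ztwo}$ together with the base-change identification of $i^{*}N^{\Ztwo}$; both are furnished by the Bachmann--Hoyois framework and the appendix of \cite{THR}, so no genuinely new argument is required. I therefore expect the proof to be essentially a short chain of formal manipulations, with Proposition \ref{drep.3} doing the real work on the monoidal side.
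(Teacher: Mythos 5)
Your proof is correct and follows essentially the same route as the paper: apply $i^{*}$ to the defining relative smash product (using that $i^{*}\colon\NAlg_{\Ztwo}\to\CAlg$ preserves colimits) and identify the bar-construction factors via Proposition \ref{drep.3}. The only difference is that the paper simply cites \cite[Proposition 3.4.7]{THR} for the equivalence $i^{*}\THR(A)\simeq\THH(i^{*}A)$, whereas you re-derive it from the norm base-change $i^{*}N^{\Ztwo}\simeq q_{\otimes}q^{*}$; your derivation is sound.
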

\begin{proof}
We have a natural equivalence $i^*\THR(A)\simeq \THH(i^*A)$
by \cite[Proposition 3.4.7]{THR}.
Since $i^*\colon \NAlg^{\Ztwo}\to \CAlg$ preserves colimits,
we have a natural equivalence
\[
i^*\THR(A,P)
\simeq
i^*\THR(A)\wedge_{i^*\Sphere[\Bdi P]}i^*\Sphere[\Bdrep P].
\]
Together with Proposition \ref{drep.3},
we obtain the desired equivalence.
\end{proof}

For a commutative monoid $P$,
let $\ul{P}$ denote the commutative monoid $P$ with the trivial involution.

\begin{const}
\label{thrlog.11}
Let $(A,P)$ be a log ring.
We have the composite map of spectra
\[
i^*\THR(i_*A,\ul{P})
\xrightarrow{\simeq}
\THH(A\oplus A,P)
\to
\THH(A,P),
\]
where log structure homomorphism $\ul{P}\to i_*A$ sends $p\in P$ to
$(\alpha(p),\alpha(p))$,
where $\alpha\colon P\to A$ is the log structure homomorphism.
and the first arrow is obtained by Proposition \ref{thrlog.1}, and the second arrow is induced by the summation homomorphism $A\oplus A\to A$.
By adjunction,
we obtain a map of $\Z/2$-spectra
\begin{equation}
\label{thrlog.11.1}
\THR(i_*A,\ul{P})
\to
i_*\THH(A,P).
\end{equation}
\end{const}

\begin{prop}
\label{thrlog.3}
Let $(A,P)$ be a log ring.
Then \eqref{thrlog.11.1} is an equivalence.
\end{prop}
\begin{proof}
Lemma \ref{thrlog.6} and Proposition \ref{drep.3}
yield natural equivalences of $\Ztwo$-spectra
\begin{align*}
i_*\THH(A,P)
\simeq &
i_*(\THH(A)\wedge_{i^*\Sphere[\Bdi \ul{P}]}i^*\Sphere[\Bdrep \ul{P}])
\\
\simeq &
i_*\THH(A)\wedge_{\Sphere[\Bdi \ul{P}]}\Sphere[\Bdrep \ul{P}].
\end{align*}
By \cite[Propositions 2.1.4, 2.3.3]{THR},
we have a natural equivalence of $\Ztwo$-spectra
\[
i_*\THH(A)
\simeq
\THR(i_*A).
\]
Combine these with the definition of $\THR(i_*A,\ul{P})$ to conclude.
\end{proof}

\begin{prop}
\label{thrlog.4}
The functor $\THR$ from the category of log rings with involutions to $\Z/2$-spectra preserves filtered colimits.
\end{prop}
\begin{proof}
One can directly check that the endofunctors $P\mapsto i_\sharp i^*P, (i_\sharp i^*P)^{ex}$ on the category of commutative monoids with involutions preserve colimits.
By Propositions \ref{drep.2} and \ref{dih.15},
the functors $P\mapsto \Sphere[\Bdi P],\Sphere[\Bdrep P]$ from the category of commutative monoids with involutions to $\NAlg_{\Z/2}$ preserve colimits.

On the other hand,
the functor $A\mapsto \THR(A)$ from the category of commutative rings with involutions to $\NAlg_{\Z/2}$ preserves filtered colimits since the Eilenberg-MacLane functor preserves filtered colimits, $N^{\Z/2}$ and $i^*$ preserve colimits, and $\wedge$ is the coproduct.
It follows that the functor $(A,P)\mapsto \THR(A,P)$ from the category of commutative rings with involutions to $\NAlg_{\Z/2}$ preserves filtered colimits.
The forgetful functor $\NAlg_{\Z/2}\to \Sp_{\Z/2}$ preserves filtered colimits as observed in \cite[\S 1]{THR},
which finishes the proof.
\end{proof}

\section{THR of log schemes with involutions}
\label{thrscheme}

So far,
we have discussed $\THR$ of log rings $(A,P)$ with involutions.
The purpose of this section is to define $\THR(X)$ for every separated log scheme $X$ and to show that a canonical map
\[
\THR(A,P)
\to
\THR(\Spec(A,P))
\]
is an equivalence of $\Z/2$-spectra.

Let us briefly review basic notation and terminology in log geometry.
We refer to Ogus's book \cite{Ogu} for the details.
For a commutative monoid $P$,
let $P^*$ denote its submonoid of units.
We set $\ol{P}:=P/P^*$.
We say that $P$ is \emph{integral} if the induced homomorphism $P\to P^\gp$ is injective.
\begin{itemize}
\item
For a log scheme $X$,
let $\ul{X}$ be its underlying scheme,
and let $\cM_X$ be its structure sheaf of monoids.
\item
A morphism of log schemes $f\colon Y\to X$ is \emph{strict} if the induced morphism $Y\to X\times_{\ul{X}}\ul{Y}$ is an isomorphism.
\item
For a log ring $(A,P)$,
let $\Spec(A,P)$ denote its associated log scheme in the sense of \cite[Definition III.1.2.3]{Ogu}.
\item
For a commutative monoid $P$,
we set $\A_P:=\Spec(\Z[P],P)$,
whose structure homomorphism $P\to \Z[P]$ sends $p\in P$ to $p$.
\item
A \emph{chart $P$ of a log scheme $X$} is a commutative monoid $P$ together with a strict morphism $X\to \A_P$ of log schemes.
\item
A log scheme $X$ is \emph{integral} if $\cM_X$ is a sheaf of integral monoids.
\item
A log scheme $X$ is \emph{fine saturated} (or simply \emph{fs}) if $X$ admits strict \'etale locally a chart $P$ such that $P$ is a fine saturated monoid.
\end{itemize}

Let $G$ be a finite group.
For a point $x$ of a $G$-scheme $X$,
the \emph{scheme-theoretic stabilizer of $X$ at $x$} is
\[
G_x
:=
\ker(
\{
g\in G:gx=x
\}\to \mathrm{Aut}(k(x))).
\]
A morphism of $G$-schemes $Y\to X$ is an \emph{isovariant \'etale cover} if the induced homomorphism $G_y\to G_{f(x)}$ is an isomorphism for every point $y\in Y$
and $f$ is \'etale and surjective after forgetting the $G$-action.

For a commutative ring $A$ with involution, an \emph{$A$-algebra with involution} is a commutative ring $B$ with involution equipped with a $\Z/2$-equivariant map $A\to B$.

\begin{prop}
\label{descent.7}
Let $(A,P)$ be a log ring with involution.
Then the presheaf $\THR(-,P)$ on the opposite category of $A$-algebras with involution is an isovariant \'etale hypersheaf.
\end{prop}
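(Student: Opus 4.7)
The plan is to combine the isovariant \'etale hyperdescent of $\THR(-)$ on commutative rings with involution, \cite[Theorem 3.4.3]{THR}, with a base change identity over $\THR(A)$.

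I would first record that for every $A$-algebra $B$ with compatible involution, Definition \ref{dih.22} and associativity of the relative smash product yield a natural equivalence
\[
\THR(B, P) \simeq \THR(B) \wedge_{\THR(A)} \THR(A, P),
\]
since both sides compute $\THR(B) \wedge_{\Sphere[\Bdi P]} \Sphere[\Bdrep P]$; here $\THR(A,P)$ is viewed as a $\THR(A)$-algebra via the canonical map $\THR(A)\to \THR(A,P)$ coming from Definition \ref{dih.22}. Applied termwise to the \v{C}ech nerve $C^\bullet$ of an isovariant \'etale hypercover $C\to B$ of $A$-algebras with involution, this gives $\THR(C^\bullet, P) \simeq \THR(C^\bullet) \wedge_{\THR(A)} \THR(A, P)$. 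The cited theorem then provides $\THR(B) \xrightarrow{\simeq} \lim_\Delta \THR(C^\bullet)$ in $\NAlg_{\Ztwo}$, so the claim reduces to showing that the functor $(-) \wedge_{\THR(A)} \THR(A, P)$ commutes with this totalization.

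The hard part is precisely this commutation, because base change along a map of ring spectra is only a left adjoint and does not preserve arbitrary limits. My plan is to upgrade the ring-level descent to a descent of module categories: concretely, to show that the cosimplicial $\THR(B)$-algebra $\THR(C^\bullet)$ is descendable in the sense of Mathew, so that the equivalence $\THR(B) \simeq \lim_\Delta \THR(C^\bullet)$ persists after smashing with any $\THR(B)$-module. Applied to $\THR(B,P)$ regarded as a $\THR(B)$-module through the base change formula, this would yield $\THR(B, P) \simeq \lim_\Delta \THR(C^\bullet, P)$ and hence the hypersheaf property.

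I expect the descendability to be essentially already present in the proof of \cite[Theorem 3.4.3]{THR}. A likely route is to reduce via the conservative pair $(i^*, \Phi^{\Ztwo})$ recorded before Lemma \ref{thrlog.7}: the image under $i^*$ is the Čech nerve of an ordinary \'etale cover of $\THH$, where descendability at the level of modules is classical, while $\Phi^{\Ztwo}$ of an isovariant \'etale cover is again an \'etale cover of the fixed-point ring, for which the same applies. Once descendability has been verified after $i^*$ and $\Phi^{\Ztwo}$, conservativity propagates it to $\NAlg_{\Ztwo}$ and completes the argument.
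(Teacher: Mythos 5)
Your opening reduction is fine: $\THR(B,P)\simeq \THR(B)\wedge_{\THR(A)}\THR(A,P)$ does hold by associativity of the coproduct in $\NAlg_{\Ztwo}$, and the problem does come down to commuting $(-)\wedge_{\THR(A)}\THR(A,P)$ past the totalization. But this identity is purely formal and carries no \'etaleness information, so it is not where the content lies. The paper's proof instead invokes \cite[Theorem 3.2.3]{THR} to get, for an isovariant \'etale map $B\to C$ of $A$-algebras with involution, the equivalences
\[
\THR(B,P)\wedge_{\EM B}\EM C
\simeq
\THR(B,P)\wedge_{\EM \iota(B^{\Ztwo})}\EM\iota(C^{\Ztwo})
\simeq
\THR(C,P),
\]
so that $\THR(C^\bullet,P)$ is the base change of $\THR(B,P)$ along the cosimplicial \emph{discrete} ring $\EM\iota((C^\bullet)^{\Ztwo})$; hyperdescent then follows, exactly as in the proof of \cite[Theorem 3.4.3]{THR}, from classical \'etale descent over the fixed-point rings.

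The genuine gap is your final step. Descendability of $\THR(B)\to\THR(C)$ asserts that some smash power $I^{\wedge n}\to\THR(B)$ of the fiber $I$ is nullhomotopic; this is a uniform nullhomotopy condition on maps, and nullhomotopy of a map in $\Sp_{\Ztwo}$ is \emph{not} detected by the jointly conservative pair $(i^*,\Phi^{\Ztwo})$ -- for instance the connecting map $\widetilde{E\Ztwo}\to\Sigma(E\Ztwo)_+$ of the isotropy separation sequence is nonzero although it vanishes after applying either functor. So ``conservativity propagates descendability'' does not work as stated. To actually prove the descendability you want, one would observe that $\THR(B)\to\THR(C)$ is the base change of the \'etale (hence descendable) map of discrete rings $\iota(B^{\Ztwo})\to\iota(C^{\Ztwo})$ along $\EM\iota(B^{\Ztwo})\to\THR(B)$ and that descendability is stable under base change -- but that observation is again \cite[Theorem 3.2.3]{THR}, i.e., you are back to the paper's argument. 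A smaller issue: a hypercover is not the \v{C}ech nerve of a single cover, so for the hypersheaf statement you must handle arbitrary hypercovers; descendability with uniform bounds does take care of this, but your phrasing conflates the two, which is another reason the paper defers the bookkeeping to the proof of \cite[Theorem 3.4.3]{THR}.
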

\begin{proof}
Let $B\to C$ be an isovariant \'etale homomorphism of $A$-algebras with involutions.
By \cite[Theorem 3.2.3]{THR},
there are natural equivalences of $\Ztwo$-spectra
\[
\THR(B,P)
\wedge_{\EM B}
\EM C
\simeq
\THR(B,P)
\wedge_{\EM \iota (B^{\Ztwo})}
\EM \iota (C^{\Ztwo})
\simeq
\THR(C,P),
\]
where $\iota M$ denotes the commutative monoid with the trivial involution for a commutative monoid $M$.
Argue as in the proof of \cite[Theorem 3.4.3]{THR} to show the claim.
\end{proof}

\begin{lem}
\label{strict.2}
Let $\theta\colon P\to Q$ be a homomorphism of integral monoids.
If $\ol{\theta}\colon \ol{P}\to \ol{Q}$ is an isomorphism, then the induced homomorphism of monoids
\begin{equation}
\label{strict.2.1}
\eta
\colon
P\oplus_{P^*}Q^*
\to
Q
\end{equation}
is an isomorphism.
\end{lem}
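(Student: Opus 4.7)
The plan is to unwind the pushout explicitly and then check surjectivity and injectivity of $\eta$ separately, using the two halves of the assumption that $\ol{\theta}$ is an isomorphism. First I would note that the pushout $P\oplus_{P^*}Q^*$ can be described as the quotient of $P\times Q^*$ by the equivalence relation $(p\cdot a,u)\sim (p,\theta(a)\cdot u)$ for $a\in P^*$; the map $\eta$ sends a class $[p,u]$ to $\theta(p)\cdot u \in Q$. This is well-defined because $\theta$ carries $P^*$ into $Q^*$.

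For surjectivity, given $q\in Q$, the surjectivity of $\ol{\theta}$ yields $p\in P$ with $\ol{\theta(p)}=\ol{q}$ in $\ol{Q}$, so $q = \theta(p)\cdot u$ for some $u\in Q^*$; then $[p,u]\mapsto q$. For injectivity, suppose $\theta(p_1)\cdot u_1 = \theta(p_2)\cdot u_2$ in $Q$. Then $\ol{\theta(p_1)}=\ol{\theta(p_2)}$ in $\ol{Q}$, and the injectivity of $\ol{\theta}$ forces $\ol{p_1}=\ol{p_2}$ in $\ol{P}$, i.e., $p_1 = p_2\cdot a$ for some $a\in P^*$. Substituting and cancelling $\theta(p_2)$ (valid since $Q\hookrightarrow Q^\gp$ by integrality of $Q$) gives $u_2=\theta(a)\cdot u_1$ in $Q^\gp$, hence in $Q^*$. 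In the pushout one then computes $[p_1,u_1]=[p_2\cdot a,u_1]=[p_2,\theta(a)\cdot u_1]=[p_2,u_2]$, finishing injectivity.

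The only step where something could go wrong is the cancellation used in the injectivity argument, and this is precisely the place where integrality of the target $Q$ (so that $Q\to Q^\gp$ is injective) is needed; integrality of $P$ is not used directly, but is part of the natural hypothesis in the logarithmic setting. Since the whole argument is an elementary chase of relations in a pushout of commutative monoids, there is no genuine obstacle, and the proof should be very short.
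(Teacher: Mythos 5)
Your proof is correct and follows essentially the same route as the paper: surjectivity from surjectivity of $\ol{\theta}$, and injectivity by using injectivity of $\ol{\theta}$ to produce a unit $a\in P^*$ with $p_1=p_2+a$, cancelling in $Q$ via integrality, and then identifying the two classes in the pushout (the paper invokes \cite[Proposition I.1.1.5(3)]{Ogu} for the explicit description of the pushout along the group $P^*$ that you write out by hand). No substantive difference.
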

\begin{proof}
Since $\ol{\theta}$ is an isomorphism,
$\eta$ is surjective.
To show that $\eta$ is injective,
assume $\eta(p,v)=\eta(p',v')$ with $p,p'\in P$ and $v,v'\in Q^*$.
This implies $\theta(p)+v=\theta(p')+v'$.
Since $\ol{\theta}$ is an isomorphism,
there exists $u\in P^*$ satisfying $p=p'+u$.
Together with the assumption that $Q$ is integral,
we have $v'=v+\theta(u)$.
Use \cite[Proposition I.1.1.5(3)]{Ogu} to see that $(p,v)=(p',v')$ in $P\oplus_{P^*} Q^*$.
Hence $\eta$ is an isomorphism.
\end{proof}

\begin{lem}
\label{strict.3}
Let $\theta\colon P\to Q$ be a homomorphism of integral monoids with involution.
If $\ol{\theta}\colon \ol{P}\to \ol{Q}$ is surjective,
then the induced map
\[
\THR(\Sphere[Q],P)
\to
\THR(\Sphere[Q],Q)
\]
is an equivalence of $\Ztwo$-spectra.
\end{lem}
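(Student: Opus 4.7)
The plan is to unravel both sides using Propositions \ref{drep.2} and \ref{dih.15}, arrange the resulting iterated pushout as a $3\times 3$ diagram in $\NAlg_{\Ztwo}$, apply Fubini for colimits, and reduce the claim to an isomorphism of commutative monoids with involutions.

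By Propositions \ref{drep.2} and \ref{dih.15}, for $R\in\{P,Q\}$ we have natural equivalences
\[
\Sphere[\Bdi R]\simeq \Sphere[R]\wedge_{\Sphere[i_\sharp i^*R]}\Sphere[R],\quad \Sphere[\Bdrep R]\simeq \Sphere[R]\wedge_{\Sphere[(i_\sharp i^*R)^{ex}]}\Sphere[R].
\]
Substituting into $\THR(\Sphere[Q],P)=\Sphere[\Bdi Q]\wedge_{\Sphere[\Bdi P]}\Sphere[\Bdrep P]$ expresses the source as the total colimit of a $3\times 3$ diagram in $\NAlg_{\Ztwo}$ whose rows are spans $\Sphere[R]\leftarrow \Sphere[i_\sharp i^*R]\to \Sphere[R]$ (for $R=Q,P,P$ from top to bottom) and whose vertical arrows from the middle row are induced by $\theta$ and the exactification $\eta$. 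Computing this total colimit columns-first, the left and right columns collapse to $\Sphere[Q]$, while the middle column produces $\Sphere[i_\sharp i^*Q]\wedge_{\Sphere[i_\sharp i^*P]}\Sphere[(i_\sharp i^*P)^{ex}]$. Together with $\THR(\Sphere[Q],Q)\simeq \Sphere[Q]\wedge_{\Sphere[(i_\sharp i^*Q)^{ex}]}\Sphere[Q]$, this reduces the lemma to showing that the natural map
\[
\Sphere[i_\sharp i^*Q]\wedge_{\Sphere[i_\sharp i^*P]}\Sphere[(i_\sharp i^*P)^{ex}]\longrightarrow \Sphere[(i_\sharp i^*Q)^{ex}]
\]
is an equivalence in $\NAlg_{\Ztwo}$.

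The key remaining step is to show that the underlying map of commutative monoids with involutions
\[
(i_\sharp i^*Q)\oplus_{i_\sharp i^*P}(i_\sharp i^*P)^{ex}\longrightarrow (i_\sharp i^*Q)^{ex}
\]
is an isomorphism. Unpacking, this is the map $(Q\oplus Q)\oplus_{P\oplus P}(P\oplus P^\gp)\to Q\oplus Q^\gp$ equipped with the twisted involutions from Definition \ref{dih.14}. Surjectivity is immediate from $\ol\theta$ surjective: the hypothesis gives $\theta(P)+Q^*=Q$, which promotes to $\theta^\gp(P^\gp)+Q^*=Q^\gp$, and this lets every element of $Q\oplus Q^\gp$ be lifted. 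For injectivity, I would reduce each pushout element to a representative of the form $(q_1,q_2,0,u)\in Q\oplus Q\oplus P^\gp$ and then use the remaining pushout relations together with the integrality of $P$ and $Q$ to identify representatives with equal image. Compatibility with the involutions is a direct check from Definitions \ref{drep.5} and \ref{dih.14}.

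The hardest part will be the injectivity analysis, which requires careful bookkeeping of the defining relations of the pushout and crucially uses integrality to balance elements across the $Q\oplus Q$ and $P\oplus P^\gp$ factors. Once the monoid isomorphism is in hand, the equivalence at the $\NAlg_{\Ztwo}$-level follows because $\Sphere[-]$ is a left adjoint and the monoid pushout is flat enough here for the derived and underived pushouts to agree.
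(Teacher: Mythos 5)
Your reduction of the lemma to the assertion that
\[
\Sphere[i_\sharp i^*Q]\wedge_{\Sphere[i_\sharp i^*P]}\Sphere[(i_\sharp i^*P)^{ex}]
\to
\Sphere[(i_\sharp i^*Q)^{ex}]
\]
is an equivalence agrees with the paper (this is its ``canceling some terms'' step). After that, however, your strategy breaks down at the key step.

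First, the monoid-level statement you propose to prove is false under the stated hypothesis. Take $P=\N^2$, $Q=\N$, $\theta(a,b)=a+b$, trivial involutions; then $\ol{\theta}$ is surjective. In the pushout $(Q\oplus Q)\oplus_{P\oplus P}(P\oplus P^{\gp})$ the classes of $\bigl(0;0,(1,-1)\bigr)$ and $\bigl(0;0,(0,0)\bigr)$ are distinct --- no nonzero element of $P\oplus P$ maps to $0$ in $Q\oplus Q$, and $(1,-1)$ does not lie in the image of the exactification map $\eta$, so no elementary relation of the pushout congruence applies to either element --- yet both map to $0$ in $Q\oplus Q^{\gp}$. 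So the map is not injective. The hypothesis ``$\ol{\theta}$ surjective'' is strictly weaker than what your argument needs. The paper handles this by a preliminary reduction you omit: it localizes $P$ at the face $F=\theta^{-1}(Q^*)$, shows $i^*P_F$ is a filtered colimit of free $i^*P$-sets (so that $\Sphere[i_\sharp i^*P_F]\wedge_{\Sphere[i_\sharp i^*P]}\Sphere[(i_\sharp i^*P)^{ex}]\simeq\Sphere[(i_\sharp i^*P_F)^{ex}]$), and thereby reduces to the case where $\ol{\theta}$ is an \emph{isomorphism}. Only then does a statement like yours become true (Lemma \ref{strict.2} is exactly the input), and my counterexample shows this reduction cannot be skipped.

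Second, even in the reduced case, ``the monoid pushout is flat enough here for the derived and underived pushouts to agree'' is not a throwaway remark --- it is the entire remaining content of the proof, and in the $\Ztwo$-equivariant setting it is not a formal consequence of a monoid isomorphism. The paper makes it precise by checking the equivalence separately after $i^*$ and after $\Phi^{\Ztwo}$ (the pair is conservative): for $i^*$ it exhibits $i^*P\oplus i^*P$ and $i^*P\oplus i^*P^{\gp}$ as free sets over the group $i^*P^*\oplus i^*P^*$ and uses the pasting law on cocartesian squares of integral monoids; for $\Phi^{\Ztwo}$ it must first identify the fixed-point monoids $L$ and $M$ of $(i_\sharp i^*P)^{ex}$ and $(i_\sharp i^*Q)^{ex}$, which requires exactness of $\theta$ (available only after the reduction to $\ol{\theta}$ an isomorphism). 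Your proposal contains neither the freeness argument nor any mechanism for controlling geometric fixed points, so the passage from the monoid statement to the $\Ztwo$-spectrum statement is unsupported.
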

\begin{proof}
Due to Propositions \ref{drep.2} and \ref{dih.15},
it suffices to show that the induced map
\[
(\Sphere[Q] \wedge_{\Sphere[i_\sharp i^*Q]} \Sphere[Q])
\wedge_{(\Sphere[P] \wedge_{\Sphere[i_\sharp i^*P]} \Sphere[P])}
(\Sphere[P] \wedge_{\Sphere[(i_\sharp i^*P)^{ex}]} \Sphere[P])
\to
\Sphere[Q] \wedge_{\Sphere[(i_\sharp i^*Q)^{ex}]} \Sphere[Q]
\]
is an equivalence of $\Ztwo$-spectra.
The left-hand side is equivalent to
\[
\Sphere[Q]\wedge_{(\Sphere[i_\sharp i^*Q]\wedge_{\Sphere[i_\sharp i^*P]}\Sphere[(i_\sharp i^*P)^{ex}])} \Sphere[Q].
\]
Hence it suffices to show that the induced map
\[
\Sphere[i_\sharp i^*Q]\wedge_{\Sphere[i_\sharp i^*P]}\Sphere[(i_\sharp i^*P)^{ex}]
\to
\Sphere[(i_\sharp i^*Q)^{ex}]
\]
is an equivalence of $\Ztwo$-spectra.
Since $P$ is integral,
$u+x=x$ implies $u=0$ for $u\in P^*$ and $x\in P$.
It follows that
$i^*P$ is a free $i^*P^*$-set,
so the induced map
\[
\Sphere[i_\sharp i^* Q^*]
\wedge_{\Sphere[i_\sharp i^* P^*]}
\Sphere[i_\sharp i^* P]
\to
\Sphere[i_\sharp i^*Q]
\]
is an equivalence of $\Z/2$-spectra.
Hence it suffices to show that the induced map
\begin{equation}
\label{strict.3.1}
\Sphere[i_\sharp i^*Q^*]\wedge_{\Sphere[i_\sharp i^*P^*]}\Sphere[(i_\sharp i^*P)^{ex}]
\to
\Sphere[(i_\sharp i^*Q)^{ex}]
\end{equation}
is an equivalence of $\Ztwo$-spectra.

Consider the localization $P_F$ with respect to the face $F:=\theta^{-1}(Q^*)$ of $P$.
By \cite[Theorem I.4.5.7, Proposition I.4.6.3(3), Remark I.4.6.6]{Ogu},
$i^*P_F$ is a filtered colimit of free $i^*P$-sets.
This implies that $\Sphere[i_\sharp i^*P_F]$ is a filtered colimit of free $\Sphere[i_\sharp i^*P]$-modules.
It follows that the induced map
\[
\Sphere[i_\sharp i^*P_F]\wedge_{\Sphere[i_\sharp i^*P]}\Sphere[(i_\sharp i^*P)^{ex}]
\to
\Sphere[(i_\sharp i^*P_F)^{ex}]
\]
is an equivalence.
Replace $P\to Q$ with $P_F\to Q$ to reduce to the case when $\ol{\theta}$ is an isomorphism.
Then we have $Q\simeq P\oplus_{P^*} Q^*$ by Proposition \ref{strict.2}.

We have the induced cocartesian square
\begin{equation}
\label{strict.3.2}
\begin{tikzcd}
i^*P^*\oplus i^*P^*\ar[r,"\alpha"]\ar[d]&
i^*P\oplus i^*P^\gp\ar[d]
\\
i^*Q^*\oplus i^*Q^*\ar[r]&
i^*Q\oplus i^*Q^\gp,
\end{tikzcd}
\end{equation}
where $\alpha$ sends $(p,p')$ to $(p+p',p')$.
Observe that $i^*P\oplus i^*P^\gp$ is a free $i^*P^*\oplus i^*P^*$-set.
Apply $\Sphere[-]$ to this square to see that the induced map
\[
\Sphere[i^*Q \oplus i^*Q]
\wedge_{\Sphere[i^*P\oplus i^*P]}
\Sphere[i^*P\oplus i^*P^\gp]
\to
\Sphere[i^*Q\oplus i^*Q^\gp].
\]
is an equivalence of spectra,
i.e.,
\eqref{strict.3.1} becomes an equivalence after applying $i^*$.
Hence it remains to show that \eqref{strict.3.1} becomes an equivalence after applying $\Phi^{\Z/2}$.

We claim that $\theta$ is exact,
i.e.,
the induced square
\[
\begin{tikzcd}
P\ar[d,"\theta"']\ar[r,"\eta"]&
P^\gp\ar[d,"\theta"]
\\
Q\ar[r,"\eta'"]&
Q^\gp
\end{tikzcd}
\]
is cartesian.
Assume that $\eta'(q)=\theta(p')$ for some $q\in Q$ and $p'\in P^\gp$.
Using \cite[Proposition I.1.1.5(3)]{Ogu} for $Q\simeq P\oplus_{P^*}Q^*$,
we can write $q=(p,v)$ for some $p\in P$ and $v\in Q^*$.
Since the group completion functor is a left adjoint,
we have an isomorphism $Q^\gp \simeq P^\gp\oplus_{P^*}Q^*$.
Using \cite[Proposition I.1.1.5(3)]{Ogu} for this,
we have $(\eta(p),v)=(p',0)$ in $Q'^\gp$,
and hence we have $v=\theta(u)$ for some $u\in P^*$.
This implies that $q=\theta(p+u)$ and $p'=\eta(p+u)$,
so $\theta$ is exact.

Observe that the $\Ztwo$-fixed point monoids of $i_\sharp i^* P^*$ and $i_\sharp i^*Q^*$ are isomorphic to $i^*P^*$ and $i^*Q^*$.
Let $w$ denote the involutions on $P$ and $Q$,
and let $M$ and $N$ be the $\Ztwo$-fixed point monoids of $(i_\sharp i^*P)^{ex}$ and $(i_\sharp i^*Q)^{ex}$.
Observe that $N$ is a submonoid of $Q\oplus Q^\gp$ consisting of $(q,y)$ such that $\eta'(q)=y+w(y)$.
We have a similar description for $M$ too.
Using $Q^\gp \simeq P^\gp\oplus_{P^*}Q^*$,
we can find $x\in P^\gp$ and $v\in Q^*$ such that $y=\theta(x)+v$.
Since $\theta$ is exact,
there exists a unique $p\in P$ such that $\eta(p)=x+w(x)$ and $\theta(p)=q-v-w(v)$.
This implies that the induced homomorphism $M\oplus_{i^*P^*}i^*Q^*\to N$ is surjective.

On the other hand,
if $(p,x)\in M$ and $v\in Q^*$ satisfies $(\theta(p),\theta(x))=(v+w(v),v)$ in $N$,
then we have $p\in \theta^{-1}(Q^*)$.
By \cite[Proposition I.1.1.5(3)]{Ogu} for $Q\cong P\oplus_{P^*}Q^*$,
we have $\theta^{-1}(Q^*)=P^*$, so we have $p\in P^*$.
This implies that the induced homomorphism $M\oplus_{i^*P^*}i^*Q^*\to N$ is injective and hence an isomorphism.
Since $M$ is a free $i^*P^*$-set,
we deduce that the induced map
\[
\Sphere[i^*Q]\wedge_{\Sphere[i^*P]}\Sphere[M]
\to
\Sphere[N]
\]
is an equivalence of spectra,
i.e.,
\eqref{strict.3.1} becomes an equivalence after applying $\Phi^{\Ztwo}$.
\end{proof}

A morphism of log schemes with involutions $f\colon Y\to X$ is a \emph{strict isovariant \'etale cover} if $f$ is strict and its underlying morphism of schemes with involutions $\ul{f}\colon \ul{Y}\to \ul{X}$ is an isovariant \'etale cover.
The \emph{strict isovariant \'etale topology} is the topology generated by strict isovariant \'etale covers.
Let $siso\et$ be the shorthand for this topology.

\begin{prop}
\label{siso.4}
Let $X$ be a separated integral log scheme with involution.
Then there exists a strict isovariant \'etale covering $\{U_i\to X\}_{i\in I}$ such that each $\ul{U_i}$ is an affine scheme with involution.
\end{prop}
\begin{proof}
Let $w$ be the involution on $X$.
Choose an open neighborhood $U_x$ of $x$ in $X$ such that $\ul{U_x}$ is an affine scheme.
If $x=w(x)$,
then the underlying scheme of $U_x\cap w(U_x)$ is an affine scheme with involution since $X$ is separated.
If $x\neq w(x)$,
then the underlying scheme of $U_x\amalg w(U_x)$ is an affine scheme with involution.
We finish the proof using \cite[Corollary 2.19]{HeKO}.
\end{proof}

Let $\lSch$ denote the category of separated integral log schemes.
Following Definition \ref{thrlog.9},
we obtain the category
$\lSch_{\Ztwo}$,
and we have the forgetful functor $i^*\colon \lSch_{\Ztwo}\to \lSch$ with a left adjoint $i_\sharp$.

\begin{df}
\label{descent.4}
Let $X$ be a separated integral log scheme with involution.
Consider the presheaf $\THR|_X$ of $\Z/2$-spectra
given by
\[
\THR|_X(U)
:=\THR(\Gamma(U,\cO_U),\Gamma(U,\cM_U))
\]
for every strict isovariant \'etale morphism $U\to X$.
The \emph{real topological Hochschild homology of $X$} is
\[
\THR(X)
:=
(L_{siso\et} \THR|_X)(X)\in \Sp^{\Z/2},
\]
where $L_{siso\et}\colon \PSh(\lSch_{\Z/2},\Sp^{\Z/2})\to \Sh_{siso\et}(\lSch_{\Z/2},\Sp^{\Z/2})$ denotes the strict isovariant \'etale sheafification functor, see \cite[Lemma 1.3.4.3]{SAG} for sheafification functors.

Observe that $\THR$ is a strict isovariant \'etale sheaf by definition.
\end{df}

\begin{thm}
\label{descent.5}
Let $(A,P)$ be an integral log ring with involution.
Then the induced map of $\Z/2$-spectra
\[
\THR(A,P)
\to
\THR(\Spec(A,P))
\]
is an equivalence.
\end{thm}
\begin{proof}
Consider the presheaf $\THR|_{(A,P)}$ of $\Z/2$-spectra given by
\[
\THR|_{(A,P)}(B):= \THR(B,P)
\]
for every $A$-algebra $B$ with involution such that $\Spec(A)\to \Spec(B)$ is isovariant \'etale.
We have the induced map of presheaves
\[
\THR|_{(A,P)}
\to
\THR|_{\Spec(A,P)}.
\]
Observe that the left-hand side is an isovariant \'etale sheaf by Proposition \ref{descent.7}.
It suffices to show that this map of presheaves becomes an equivalence after sheafification
since taking the global sections produces the desired equivalence.

For this,
it suffices to show that the map of stalks is an equivalence.
For every sheaf $\cF$ and a point $x$ of $X$,
the stalk $\cF_x$ is given by the filtered colimit $\colim_{U\ni x}\cF(U)$.
Hence by Propositions \ref{thrlog.4} and \ref{siso.4},
we reduce to the case when $A$ is a local ring with involution.
In this case,
we need to show that the induced map
\[
\THR(A,P)
\to
\THR(A,P^a)
\]
is an equivalence,
where $P^a$ is the logification of $P$,
which is given by $P^a:=P\oplus_{\theta^{-1}(A^*)}A^*$ if $\theta\colon P\to A$ is the structure map.
This is a consequence of Lemma \ref{strict.3} for $P\to P^a$.
\end{proof}

In the next results,
we explain how $\THR$ is related with $\THH$ under the functors $i^*$ and $i_*$.
We review the definition of $\THH$ of schemes in \cite[Definition 8.3.7]{logSH} as follows.
We consider the presheaf of spectra
$\THH|_X$ on $X_{\et}$ given by
\[
\THH|_X(U):=\THH(\Gamma(U,\cO_U),\Gamma(U,\cM_U))
\]
for $U\in X_{\et}$.
The topological Hochschild homology of $X$ is
\[
\THH(X):=
(L_{\et}\THH|_X)(X)\in \Sp,
\]
where $L_{\et}$ denotes the \'etale sheafification functor.
The induced map
\[
\THH(A,P)
\simeq
\THH(\Spec(A,P))
\]
is an equivalence of spectra for every integral log ring $(A,P)$ as Theorem \ref{descent.5}.

\begin{prop}
\label{siso.7}
Let $X$ be a separated integral log scheme with involution.
Then there is a natural equivalence of spectra
\[
\THH(i^*X)
\simeq
i^*\THR(X).
\]
\end{prop}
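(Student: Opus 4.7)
The plan is to view both $X\mapsto \THH(i^*X)$ and $X\mapsto i^*\THR(X)$ as strict isovariant \'etale hypersheaves on $\lSch_{\Ztwo}$ valued in $\Sp$, and then reduce to the fs log affine case where the identification is essentially already in hand.

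For the right-hand side, the functor $i^*\colon \Sp_{\Ztwo}\to \Sp$ preserves limits, so composing it with the $\Ztwo$-equivariant hypersheaf $\THR$ from Definition \ref{descent.15} yields a strict isovariant \'etale hypersheaf valued in $\Sp$. For the left-hand side, $i^*$ of a strict isovariant \'etale cover in $\lSch_{\Ztwo}$ is a strict \'etale cover in $\lSch$, so since $\THH$ of noetherian separated fs log schemes is a strict \'etale hypersheaf---constructed in \cite[\S 8]{logSH} along the same lines as our Corollary \ref{descent.9}---pullback along $i^*\colon \lSch_{\Ztwo}\to \lSch$ yields the desired strict isovariant \'etale hypersheaf.

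By the $\Sp$-valued analogue of Proposition \ref{descent.14}, it then suffices to produce a natural equivalence on noetherian fs log affine schemes with involution. For such $X=\Spec(A,P)$, Proposition \ref{descent.16} identifies $\THR(X)$ with $\THR(A,P)$ as $\Ztwo$-spectra, Proposition \ref{thrlog.1} gives the natural equivalence $i^*\THR(A,P)\simeq \THH(i^*A,i^*P)$, and the non-equivariant analogue of Theorem \ref{descent.6}---proved by the same reduction but with Proposition \ref{thrlog.8} and Lemma \ref{strict.3} replaced by their classical analogues in \cite{Rog09}---identifies $\THH(i^*A,i^*P)$ with $\THH(i^*X)$. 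Splicing these three natural equivalences yields the claim.

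The main potential obstacle is ensuring that $\THH$ of noetherian separated fs log schemes is at our disposal as a strict \'etale hypersheaf with the affine identification $\THH(A,P)\simeq \THH(\Spec(A,P))$. Both are the non-equivariant shadows of the work carried out in Section \ref{thrscheme}, so their proofs transfer verbatim by forgetting involutions and replacing the dihedral replete bar construction by Rognes' replete bar construction; alternatively, they are established in \cite[\S 8]{logSH}.
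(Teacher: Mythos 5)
Your proposal is correct and follows essentially the same route as the paper: reduce via the hypersheaf property and Proposition \ref{descent.14} to the fs log affine case, then combine Proposition \ref{thrlog.1} with Theorem \ref{descent.6} (and its non-equivariant counterpart from \cite[\S 8]{logSH}). You spell out the sheaf-theoretic bookkeeping and the affine identification of $\THH$ more explicitly than the paper does, but the argument is the same.
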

\begin{proof}
By Proposition \ref{siso.4},
we reduce to the case when $X=\Spec(A,P)$ for some log ring $(A,P)$ with involution.
Proposition \ref{thrlog.1} finishes the proof.
\end{proof}

\begin{prop}
\label{descent.12}
Let $X$ be a separated integral log scheme.
Then there is a natural equivalence of $\Ztwo$-spectra
\[
\THR(i_\sharp X)
\simeq
i_*\THH(X)
\]
\end{prop}
\begin{proof}
As above,
we reduce to the case when $X=\Spec(A,P)$ for some log ring $(A,P)$.
In this case,
there is a natural isomorphism
\[
i_\sharp \Spec(A,P)
\simeq
\Spec(i_* A,\ul{P}).
\]
Proposition \ref{thrlog.3} finishes the proof.
\end{proof}

\begin{cor}
\label{descent.13}
Let $X$ be a separated integral log scheme with involution.
Then there is a natural equivalence of spectra
\[
\THR(i_\sharp X)^{\Z/2}
\simeq
\THH(X).
\]
\end{cor}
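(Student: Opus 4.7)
The plan is to deduce this immediately from Proposition \ref{descent.12} by taking $\Ztwo$-fixed points. That proposition provides a natural equivalence of $\Ztwo$-spectra $\THR(i_\sharp X)\simeq i_*\THH(X)$, so applying $(-)^{\Ztwo}=p_*$ to both sides turns the left-hand side into $\THO(i_\sharp X)$ by definition. It then suffices to identify $p_*i_*\THH(X)$ with $\THH(X)$.

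For this identification I would invoke the functoriality of right adjoints in the Bachmann-Hoyois formalism recalled in \S\ref{dih}. The composite $\pt\xrightarrow{i}\rB(\Ztwo)\xrightarrow{p}\pt$ is the identity morphism of $\pt$ in $\FinGpd$, so $(p\circ i)^*=i^*p^*=\id_{\Sp}$, and passing to right adjoints gives $p_*i_*\simeq \id_{\Sp}$. Evaluating at $\THH(X)$ yields the desired equivalence $(i_*\THH(X))^{\Ztwo}\simeq \THH(X)$, and composing with the fixed points of the equivalence from Proposition \ref{descent.12} finishes the argument.

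Since both inputs — the descent equivalence of Proposition \ref{descent.12} and the formal identity $p_*i_*\simeq \id$ coming from $p\circ i=\id_\pt$ — are already available, there is no substantive obstacle. The proof is a short formal manipulation rather than a genuine computation; the only thing to check is the naturality of the composite equivalence, which is automatic from the naturality of each ingredient.
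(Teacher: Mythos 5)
Your proposal is correct and follows the same route as the paper: apply $(-)^{\Ztwo}=p_*$ to the equivalence $\THR(i_\sharp X)\simeq i_*\THH(X)$ of Proposition \ref{descent.12} and then identify $p_*i_*\simeq\id$. The paper cites \cite[Proposition A.2.7(2)]{THR} for this last identification, whereas you rederive it from $p\circ i\simeq\id_{\pt}$ by passing to right adjoints; this is a valid proof of the cited fact, so the two arguments coincide in substance.
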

\begin{proof}
Combine Proposition \ref{descent.12} with \cite[Proposition A.2.7(2)]{THR} to conclude.
\end{proof}

\section{TCR of log schemes with involutions}
\label{TCR}

Throughout this section,
$p$ is a prime number.
This section is relying on  \cite{QS1} and \cite{QS2} due to Quigley-Shah,
which we review as follows.
Keep in mind that our $C_p$ and $\Ztwo$ correspond to their $\mu_p$ and $C_2$.
See \cite[\S 3]{synTCR} for another review.

A \emph{$\Z/2$-$\infty$-category} (resp.\ \emph{$\Z/2$-$\infty$-space}) is a presheaf of $\infty$-categories (resp.\ spaces) on the orbit category $\cO_{\Z/2}$.
Recall that $\cO_{\Z/2}$ can be described as the diagram
\[
\begin{tikzcd}
(\Z/2)/e\ar[loop left,"w"]\ar[r,"\mathrm{res}"]&
(\Z/2)/(\Z/2)
\end{tikzcd}
\]
Observe that we can naturally view a $\Z/2$-space as a $\Z/2$-category.
A $\Z/2$-functor of $\Z/2$-categories is a morphism of presheaves.
For $\Z/2$-categories $\cC$ and $\cD$,
let $\Fun_{\Z/2}(\cC,\cD)$ be the $\infty$-category of $\Z/2$-functors $\cC\to \cD$.

Consider the $\Z/2$-$\infty$-categories $\ul{\Sp}^{\Z/2}$ and $\ul{\NAlg}^{\Z/2}$ in \cite[Example 3.2]{synTCR}.
We have $\ul{\Sp}^{\Z/2}((\Z/2)/e)=\Sp$, $\ul{\Sp}^{\Z/2}((\Z/2)/(\Z/2))=\Sp^{\Z/2}$, and $\ul{\Sp}^{\Z/2}(\mathrm{res})=i^*$.
We have a similar description for $\NAlg^{\Z/2}$ too.
Let $BS^\sigma$ and $BC_p^\sigma$ be the classifying $\Z/2$-spaces of $S^\sigma$ and $C_p^\sigma$,
where $C_p^\sigma$ denotes the $\Z/2$-subspace of $S^\sigma$ whose underlying space is $C_p$.
We have the $\infty$-categories
\begin{gather*}
(\Sp^{\Z/2})^{BS^\sigma}
:=
\Fun_{\Z/2}(
BS^\sigma,
\ul{\Sp}^{\Z/2}
),
\\
(\Sp^{\Z/2})^{BC_p^\sigma}
:=
\Fun_{\Z/2}(
BC_p^\sigma,
\ul{\Sp}^{\Z/2}
).
\end{gather*}
We similarly have the $\infty$-categories $(\NAlg^{\Z/2})^{BS^\sigma}$ and $(\NAlg^{\Z/2})^{BC_p^\sigma}$.

For equivariant homotopy theory,
we refer to \cite[\S 3]{MR3838307} for the model categorical approach (the group $G$ can be a compact Lie group) and \cite[\S 9]{BH21} for the $\infty$-categorical approach ($G$ is only a profinite group).
For a compact Lie group $G$,
let $\Sp^G$ be the $\infty$-category of $G$-spectra,
which is the underlying $\infty$-category of orthogonal $G$-spectra.
According to \cite[Theorem A, Remark 1.8]{QS1},
we have the forgetful functors
\begin{gather*}
j^*\colon \Sp^{O(2)}
\to
(\Sp^{\Z/2})^{BS^\sigma},
\\
j^*\colon \Sp^{D_{2p}}
\to
(\Sp^{\Z/2})^{BC_p^\sigma},
\end{gather*}
which admit right adjoints $j_*$.
We have the fixed point functors
\begin{gather*}
(-)^{C_p}
\colon
\Sp^{O(2)}
\to
\Sp^{O(2)/C_p}
\simeq
\Sp^{O(2)},
\\
(-)^{C_p}
\colon
\Sp^{D_{2p}}
\to
\Sp^{\Z/2}.
\end{gather*}
We have the norm functor
\[
N_{\Z/2}^{D_{2p}}
\colon
\Sp^{\Z/2}\to \Sp^{D_{2p}}.
\]
We have the geometric fixed point functors
\begin{gather*}
\Phi^{C_p}
\colon
\Sp^{O(2)}
\to
\Sp^{O(2)/C_p}
\simeq
\Sp^{O(2)},
\\
\Phi^{C_p}
\colon
\Sp^{D_{2p}}
\to
\Sp^{\Ztwo}.
\end{gather*}
Recall from \cite[Definition 1.6, Remark 1.8]{QS1} that the \emph{parametrized Tate constructions} are
\begin{gather*}
(-)^{tC_p^\sigma}
:=
j^* \Phi^{C_p} j_*
\colon
(\Sp^{\Z/2})^{BS^\sigma}
\to
(\Sp^{\Z/2})^{BS^\sigma},
\\
(-)^{tC_p^\sigma}
:=
\Phi^{C_p} j_*
\colon
(\Sp^{\Z/2})^{BC_p^\sigma}
\to
\Sp^{\Z/2}.
\end{gather*}
For $X\in \Sp^{\Z/2}$, we often use the notation $X^{\otimes C_p^\sigma}:=j^*N_{\Z/2}^{D_{2p}} X$.
We have the parametrized Tate diagonal given by the composite natural map
\[
\Delta
\colon
X \xrightarrow{\simeq} \Phi^{C_p} N_{\Z/2}^{D_{2p}} X \to \Phi^{C_p} j_*j^* N_{\Z/2}^{D_{2p}} X = (X^{\otimes C_p^\sigma})^{tC_p^\sigma}.
\]

A \emph{real cyclotomic spectrum $X$} is an object of $(\Sp^{\Z/2})^{BS^\sigma}$ equipped with maps
\[
\varphi_p
\colon
X
\to
X^{tC_p^\sigma}
\]
in $(\Sp^{\Z/2})^{BS^\sigma}$ for all primes $p$.
In \cite[Definition 1.20]{QS2},
the $\infty$-category of real cyclotomic spectra is defined to be the lax equalizer
\[
\begin{tikzcd}[column sep=large]
\RCycSp
:=
\LEq((\Sp^{\Z/2})^{BS^\sigma}
\arrow[r,shift left=0.5ex,"\id"]\ar[r,shift right,"\prod_p (-)^{tC_p^\sigma}"']&
\prod_p (\Sp^{\Z/2})^{BS^\sigma}).
\end{tikzcd}
\]
See \cite[Remark 1.21]{QS2} for the $\Z/2$-$\infty$-category of real cyclotomic spectra $\ul{\RCycSp}$.
We refer to \cite[Remark 4.3]{QS2} for the functor
\[
\TCR\colon \RCycSp\to \Sp^{\Z/2}.
\]

By \cite[Proposition 3.7]{synTCR},
the forgetful functor
\[
q^*
\colon
(\NAlg^{\Z/2})^{BS^\sigma}
\to
\NAlg^{\Ztwo}
\]
admits a left adjoint $q_\otimes$.
For $A\in \NAlg^{\Ztwo}$,
following \cite[\S 5]{QS2} (see also \cite[Definition 3.11, Proposition 3.12]{synTCR},
we set
\[
\THR(A):=
q_\otimes A.
\]
Recall from \cite[\S 5]{QS2} the following facts:
(1) There exists a unique map
\[
\varphi_p
\colon
\THR(A)
\to
\THR(A)^{tC_p^\sigma}
\]
such that the induced square
\begin{equation}
\label{TCR.0.1}
\begin{tikzcd}
A\ar[d,"\Delta"']\ar[r]&
\THR(A)\ar[d,"\varphi_p"]
\\
(A^{\otimes C_p^\sigma})^{tC_p^\sigma}\ar[r]&
\THR(A)^{tC_p^\sigma}
\end{tikzcd}
\end{equation}
commutes,
where the lower horizontal map is induced by the inclusion $C_p\to S^1$.
(2) $\varphi_p$ is a map in $\NAlg^{\Z/2}$.
(3) As a consequence of (2),
$\THR(A)$ is an object of $\NAlg(\RCycSp)$,
where $\NAlg(\RCycSp)$ denotes the $\infty$-category of normed algebras in $\ul{\RCycSp}$ defined as in \cite[Definition 9.14]{BH21}.
We write down the definition as follows.
For $X\in \FinGpd$,
let $\Fin_X$ denote the category of finite coverings of $X$ in $\FinGpd$.
We have a functor
\[
\ul{\RCycSp}^\otimes
\colon
\Span(\Fin_{B(\Z/2)})
\to
\Cat_\infty
\]
in \cite[Point 3 after Remark 4.3]{QS2}.
A \emph{normed algebra in $\ul{\RCycSp}$} is a section of $\ul{\RCycSp}^\otimes$ over $\Span(\Fin_{B(\Z/2)})$ that is cocartesian over the backward morphisms in $\Span(\Fin_{B(\Z/2)})$. Recall that a backward morphism in a span $\infty$-category is a morphism of the form $X\leftarrow Y \xrightarrow{\id} Y$, see \cite[Appendix C]{BH21}.

\begin{prop}
\label{TCR.1}
Let $P$ be a commutative monoid with involution.
Then for every prime $p$,
there exists a natural equivalence of $\Z/2$-spaces
\begin{equation}
\label{TCR.1.1}
(\Bdi P)^{C_p}
\simeq
\Bdi P.
\end{equation}
\end{prop}
\begin{proof}
Let $\sd_p$ be the $p$-fold subdivision functor in \cite{BHM93}.
For every integer $q\geq 0$,
the set of $q$-simplicies in $(\sd_p \Ndi P)^{C_p}$ is the set
\[
\{
(x_0,\ldots,x_q,\ldots,x_0,\ldots,x_q):
x_0,\ldots,x_q\in P
\},
\]
which is isomorphic to the set of $q$-simplicies in $\Ndi P$.
This isomorphism is also compatible with the involutions,
so we obtain the desired equivalence.
\end{proof}

\begin{prop}
\label{TCR.2}
Let $P$ be a commutative monoid with involution.
Then for every prime $p$,
there exists a natural equivalence of $\Z/2$-spaces
\begin{equation}
\label{TCR.2.1}
(\Bdrep P)^{C_p}
\simeq
\Bdrep P.
\end{equation}
\end{prop}
\begin{proof}
The isomorphism $\Ndi P^\gp \cong (\sd_p \Ndi P^\gp)^{C_p}$ obtained by the proof of Proposition \ref{TCR.1} can be restricted to an isomorphism $\Ndrep P \cong (\sd_p \Ndrep P)^{C_p}$.
This produces the desired equivalence.
\end{proof}

\begin{prop}
\label{TCR.3}
Let $P$ be a commutative monoid with involution.
Then the Frobenius $\varphi_p \colon \THR(\Sphere[P])\to \THR(\Sphere[P])^{tC_p^\sigma}$ is equivalent to the composite
\begin{equation}
\label{TCR.3.1}
\Sphere[\Bdi P]
\xrightarrow{\simeq}
\Sphere[(\Bdi P)^{C_p}]
\xrightarrow{\simeq}
\Phi^{C_p}\Sphere[\Bdi P]
\to
\Sphere[\Bdi P]^{tC_p^\sigma},
\end{equation}
where the first map is obtained by \eqref{TCR.1.1}.
\end{prop}
\begin{proof}
Consider the commutative square
\[
\begin{tikzcd}
P\ar[r]\ar[d,leftarrow,"\simeq"']&
\Bdi P\ar[d,leftarrow,"\simeq"]
\\
(P^{\oplus C_p})^{C_P}\ar[r]&
(\Bdi P)^{C_p},
\end{tikzcd}
\]
where the right vertical arrow is obtained by \eqref{TCR.1.1}, and the lower horizontal arrow is induced by the inclusion $C_p\to S^1$.
After taking $\Sphere[-]$,
we obtain the commutative square
\[
\begin{tikzcd}
\Sphere[P]\ar[d,leftarrow,"\simeq"']\ar[r]&
\Sphere[\Bdi P]\ar[d,leftarrow,"\simeq"]
\\
\Phi^{C_p} N_{\Ztwo}^{D_{2p}} \Sphere[P]\ar[r]&
\Phi^{C_p} \Sphere[\Bdi P].
\end{tikzcd}
\]
Using the natural transformation $\Phi^{C_p}\to \Phi^{C_p}j_*j^*$,
we obtain the commutative square
\[
\begin{tikzcd}
\Sphere[P]\ar[d]\ar[r]&
\Sphere[\Bdi P]\ar[d]
\\
(\Sphere[P]^{\otimes C_p^\sigma})^{tC_p^\sigma}\ar[r]&
\Sphere[\Bdi P]^{tC_p^\sigma}.
\end{tikzcd}
\]
Compare this with \eqref{TCR.0.1} to conclude.
\end{proof}

\begin{df}
\label{TCR.4}
Let $P$ be a commutative monoid with involution.
We have the Frobenius $\varphi\colon \Sphere[\Bdrep P]\to \Sphere[\Bdrep P]^{tC_p^\sigma}$ given by the composite
\begin{equation}
\label{TCR.4.1}
\Sphere[\Bdrep P]
\xrightarrow{\simeq}
\Sphere[(\Bdrep P)^{C_p}]
\xrightarrow{\simeq}
\Phi^{C_p}\Sphere[\Bdrep P]
\to
\Sphere[\Bdrep P]^{tC_p^\sigma},
\end{equation}
where the first map is obtained by \eqref{TCR.2.1}.
Since the canonical map $\Bdi P\to \Bdrep P$ is a map of commutative monoids in the category of topological $O(2)$-spaces,
we obtain a natural map
\begin{equation}
\label{TCR.4.2}
\Sphere[\Bdi P]\to \Sphere[\Bdrep P]
\end{equation}
in $(\NAlg^{\Z/2})^{BS^\sigma}$.
Compare \eqref{TCR.3.1} and \eqref{TCR.4.1} to see that \eqref{TCR.4.2} is compatible with $\varphi_p$.
Hence we can promote \eqref{TCR.4.2} to a map in 
in $\NAlg(\RCycSp)$.

Let $(A,P)$ be a log ring with involution.
We take the coproduct
\[
\THR(A,P)
:=
\THR(A)
\wedge_{\Sphere[\Bdi P]}
\Sphere[\Bdrep P]
\]
in $\NAlg(\RCycSp)$.
The \emph{real topological cyclic homology of $(A,P)$} is
\[
\TCR(A,P)
:=
\TCR(\THR(A,P)).
\]
\end{df}

\begin{prop}
\label{TCR.5}
The forgetful functor
\[
\RCycSp
\to
\Sp^{\Ztwo}
\]
is conservative, exact, symmetric monoidal, and preserves colimits and finite limits.
\end{prop}
\begin{proof}
We refer to \cite[Remark 4.3]{QS2}.
\end{proof}

\begin{df}
A morphism of log schemes with involutions is a \emph{strict equivariant Nisnevich cover} if it is strict and its underlying morphism of schemes with involutions is an equivariant Nisnevich cover in the sense of Voevodsky \cite[\S 3.1]{zbMATH05645855}.
See also \cite[\S 2]{HeKO}.

The \emph{strict equivariant Nisnevich topology} is the topology generated by strict equivariant Nisnevich covers.
Let $seNis$ be the shorthand for this topology.
Observe that the strict equivariant Nisnevich topology is coarser than the strict isovariant \'etale topology.
\end{df}

\begin{prop}
\label{TCR.7}
Let $(A,P)$ be a finite dimensional noetherian log ring with involution.
Then the presheaf $\THR(-,P)$ on the opposite category of $A$-algebras of finite type with involution is an equivariant Nisnevich sheaf of real cyclotomic spectra.
\end{prop}
\begin{proof}
For an equivariant Nisnevich distinguished square $Q$ (see \cite[\S 2.1]{HeKO}) consisting of $A$-algebras of finite type with involution, we need to show that $\THR(Q,P)$ is a cocartesian square of real cyclotomic spectra.
By Proposition \ref{TCR.5},
it suffices to show that $\THR(Q,P)$ is a cocartesian square of $\Z/2$-spectra.
This is a consequence of Proposition \ref{descent.7}.
\end{proof}

\begin{rmk}
We do not know whether $\THR(-,P)$ in Proposition \ref{TCR.7} is an isovariant \'etale sheaf of real cyclotomic spectra.
An affirmative answer would remove the finite dimensional noetherian assumption in Theorem \ref{TCR.10} below.
\end{rmk}

\begin{prop}
\label{TCR.8}
Let $X$ be a finite dimensional noetherian separated log scheme with involution.
Then there exists an equivariant Nisnevich covering $\{U_i\to X\}_{i\in I}$ such that each $\ul{U_i}$ is an affine scheme with involution.
\end{prop}
\begin{proof}
This is a consequence of \cite[Lemma 2.20]{HeKO}.
\end{proof}

\begin{prop}
\label{TCR.11}
The functor $\THR$ from the category of log rings with involutions to $\Z/2$-spectra preserves filtered colimits.
\end{prop}
\begin{proof}
The is a consequence of Propositions \ref{thrlog.4} and \ref{TCR.5}.
\end{proof}

\begin{df}
\label{TCR.9}
Let $X$ be a finite dimensional noetherian separated integral log scheme with involution.
Consider the presheaf $\THR|_X$ of real cyclotomic spectra
given by
\[
\THR|_X(U)
:=\THR(\Gamma(U,\cO_U),\Gamma(U,\cM_U))
\]
for every strict isovariant \'etale morphism $U\to X$.
Consider
\[
\THR(X)
:=
(L_{seNis} \THR|_X)(X)\in \RCycSp,
\]
where $L_{seNis}$ denotes the strict equivariant Nisnevich sheafification functor.

Observe that $\THR$ is a strict equivariant Nisnevich sheaf of real cyclotomic spectra by definition.
Furthermore,
if we forget the real cyclotomic structure on $\THR(X)$,
then we recover $\THR(X)$ in Definition \ref{descent.4} by Theorem \ref{TCR.10} below.

The \emph{real topological cyclic homology of $X$} is
\[
\TCR(X):=\TCR(\THR(X)).
\]
\end{df}

\begin{thm}
\label{TCR.10}
Let $(A,P)$ be a finite dimensional noetherian integral log ring with involution.
Then the induced map
\[
\THR(A,P)
\to
\THR(\Spec(A,P))
\]
is an equivalence of real cyclotomic spectra.
In particular,
the induced map
\[
\TCR(A,P)
\to
\TCR(\Spec(A,P))
\]
is an equivalence of $\Z/2$-spectra.
\end{thm}
\begin{proof}
Argue as in the proof of Theorem \ref{descent.5},
but use instead Propositions \ref{TCR.7}, \ref{TCR.8}, and \ref{TCR.11} to reduce to the case when $A$ is a local ring.
In this case,
we need to show that the induced map
\[
\THR(A,P)
\to
\THR(A,P^a)
\]
is an equivalence of real cyclotomic spectra,
where $P^a:=P\oplus_{\theta^{-1}(A^*)}A^*$ if $\theta\colon P\to A$ is the structure map.
This is a consequence of Theorem \ref{descent.5} and Proposition \ref{TCR.5}.
\end{proof}

The purpose of the remaining part of this section is to show Proposition \ref{TCR.24},
which is needed for Proposition \ref{mot.14}.
For this,
we will check that various squares commute.

The map of $\Z/2$-spaces $q\colon *\to BS^\sigma$ induces the forgetful functor
\[
q^*\colon (\Sp^{\Z/2})^{BS^\sigma}\to \Sp^{\Z/2},
\]
which is conservative by \cite[Proposition 3.8]{synTCR} and admits a left adjoint $q_\sharp$ and right adjoint $q_*$ by \cite[Proposition 3.7]{synTCR}.

\begin{prop}
\label{TCR.13}
There are natural equivalences
\begin{gather}
\label{TCR.13.1}
q^*q_\sharp\simeq \Sigma^\infty S_+^\sigma \wedge (-),
\\
\label{TCR.13.2}
q^*q_*\simeq \Sigma^{-\sigma} \Sigma^\infty S_+^\sigma \wedge (-).
\end{gather}
\end{prop}
\begin{proof}
For \eqref{TCR.13.1},
consider the cartesian square
\[
\begin{tikzcd}
S^\sigma\ar[d,"r"']\ar[r,"r"]&
*\ar[d,"q"]
\\
*\ar[r,"q"]&
BS^\sigma
\end{tikzcd}
\]
where $r\colon S^\sigma\to *$ is the unique map.
By \cite[Lemma 4.3]{QS1},
we have a natural equivalence $q^*q_\sharp\simeq r_\sharp r^*$.
To obtain \eqref{TCR.13.1},
observe that the projection formula \cite[Lemma 5.44]{QS1} yields a natural equivalence $r_\sharp r^*\simeq \Sigma^\infty S_+^\sigma \wedge (-)$.

We have $\fib(\id \to q^*q_\sharp)\simeq \Sigma^\sigma$.
By taking right adjoints,
we have $\cofib(q^*q_*\to \id)\simeq \Sigma^{-\sigma}$,
which yields \eqref{TCR.13.2}.
\end{proof}

\begin{rmk}
\label{TCR.14}
We similarly have the conservative forgetful functor
\[
q^*\colon \Sp^{BS^1}\to \Sp
\]
with a left adjoint $q_\sharp$ and right adjoint $q_*$.
There are also natural equivalences
\begin{gather*}
q^*q_\sharp\simeq \Sigma^\infty S_+^1 \wedge (-),
\\
q^*q_*\simeq \Sigma^{-1} \Sigma^\infty S_+^1 \wedge (-).
\end{gather*}
\end{rmk}

\begin{prop}
\label{TCR.15}
There are induced commutative squares
\[
\begin{tikzcd}
\Sp^{\Z/2}\ar[d,"i^*"']\ar[r,"q_\sharp"]&
(\Sp^{\Z/2})^{BS^\sigma}\ar[d,"i^*"]
\\
\Sp\ar[r,"q_\sharp"]&
\Sp^{BS^1},
\end{tikzcd}
\quad
\begin{tikzcd}
\Sp^{\Z/2}\ar[d,"i^*"']\ar[r,"q_*"]&
(\Sp^{\Z/2})^{BS^\sigma}\ar[d,"i^*"]
\\
\Sp\ar[r,"q_*"]&
\Sp^{BS^1}.
\end{tikzcd}
\]
\end{prop}
\begin{proof}
We focus on the first square since the proofs are similar.
Since $q^*\colon \Sp^{BS^1}\to \Sp$ is conservative,
it suffices to show that the composite natural transformation
\[
q^*q_\sharp i^*
\to
q^*i^*q_\sharp
\xrightarrow{\simeq}
i^*q^*q_\sharp
\]
is an isomorphism.
Using Proposition \ref{TCR.13} and Remark \ref{TCR.14},
this natural transformation can be identified with the natural transformation
\[
\Sigma^\infty S_+^1\wedge i^*
\to
i^*(\Sigma^\infty S_+^\sigma \wedge (-)).
\]
This is an equivalence since $i^*\Sigma^\infty S^\sigma\simeq \Sigma^\infty S^1$.
\end{proof}

Let $v\colon BS^\sigma \to B(S^\sigma/C_p^\sigma)\simeq BS^\sigma$ be the functor induced by the quotient map $S^\sigma \to S^\sigma/C_p^\sigma$.
We have the induced functor $v^*\colon (\Sp^{\Z/2})^{BS^\sigma}\to (\Sp^{\Z/2})^{BS^\sigma}$ whose left adjoint is $(-)_{hC_p^\sigma}$ and right adjoint is $(-)^{hC_p^\sigma}$,
which correspond to $(-)_{h_{C_2}S^1}$ and $(-)^{h_{C_2}S^1}$ used in \cite[Example 5.57]{QS1}.
Similarly,
we have the induced functor $v^*\colon \Sp^{BS^1}\to \Sp^{BS^1}$ whose left adjoint is $(-)_{hC_p}$ and right adjoint is $(-)^{hC_p}$,
which are used in \cite[Theorem 1.3]{NS}.

\begin{prop}
\label{TCR.16}
There are induced commutative squares
\[
\begin{tikzcd}
\Sp^{BS^1}\ar[r,"v^*"]\ar[d,"i_\sharp"']&
\Sp^{BS^1}\ar[d,"i_\sharp"]
\\
(\Sp^{\Z/2})^{BS^\sigma}\ar[r,"v^*"]&
(\Sp^{\Z/2})^{BS^\sigma},
\end{tikzcd}
\quad
\begin{tikzcd}
\Sp^{BS^1}\ar[r,"v^*"]\ar[d,"i_*"']&
\Sp^{BS^1}\ar[d,"i_*"]
\\
(\Sp^{\Z/2})^{BS^\sigma}\ar[r,"v^*"]&
(\Sp^{\Z/2})^{BS^\sigma}.
\end{tikzcd}
\]
\end{prop}
\begin{proof}
We focus on the first square since the proofs are similar.
We have the induced commutative diagram
\[
\begin{tikzcd}
i_\sharp q^*v^*\ar[d,"\simeq"']\ar[r]&
q^*i_\sharp v^*\ar[r]&
q^*v^*i_\sharp
\ar[d,"\simeq"]
\\
i_\sharp q^*\ar[rr]&
&
q^*i_\sharp.
\end{tikzcd}
\]
The vertical arrows are equivalences since the composite $*\xrightarrow{q} BS^\sigma \xrightarrow{v} BS^\sigma$ agrees with $q$ and the same claim holds for $BS^1$ too.
The lower horizontal and upper right horizontal arrows are equivalences by Proposition \ref{TCR.15}.
Hence the upper left horizontal arrow is an equivalence.
To conclude,
use the fact that $q^*\colon \Sp^{BS^1}\to \Sp$ is conservative.
\end{proof}

Let $u\colon BS^\sigma \to *$ be the unique map.
We have the induced functor $u^*\colon \Sp^{\Z/2}\to (\Sp^{\Z/2})^{BS^\sigma}$ whose left adjoint is $(-)_{hS^\sigma}$ and right adjoint is $(-)^{hS^1}$.
Similarly,
we have the induced functor $u^*\colon \Sp\to \Sp^{BS^1}$ whose left adjoint is $(-)_{hS^1}$ and right adjoint is $(-)^{hS^1}$.

\begin{prop}
\label{TCR.22}
There are induced commutative squares
\[
\begin{tikzcd}
\Sp\ar[r,"u^*"]\ar[d,"i_\sharp"']&
\Sp^{BS^1}\ar[d,"i_\sharp"]
\\
\Sp^{\Z/2}\ar[r,"u^*"]&
(\Sp^{\Z/2})^{BS^\sigma},
\end{tikzcd}
\quad
\begin{tikzcd}
\Sp\ar[r,"u^*"]\ar[d,"i_*"']&
\Sp^{BS^1}\ar[d,"i_*"]
\\
\Sp^{\Z/2}\ar[r,"u^*"]&
(\Sp^{\Z/2})^{BS^\sigma}.
\end{tikzcd}
\]
\end{prop}
\begin{proof}
Argue as in Proposition \ref{TCR.16}.
\end{proof}

\begin{prop}
\label{TCR.28}
There are induced commutative squares
\[
\begin{tikzcd}
(\Sp^{\Z/2})^{BS^\sigma}\ar[r,"(-)^{tC_p^\sigma}"]\ar[d,"i^*"']&
(\Sp^{\Z/2})^{BS^\sigma}\ar[d,"i^*"]
\\
\Sp^{BS^1}\ar[r,"(-)^{tC_p}"]&
\Sp^{BS^1},
\end{tikzcd}
\quad
\begin{tikzcd}
(\Sp^{\Z/2})^{BS^\sigma}\ar[r,"(-)^{tS^\sigma}"]\ar[d,"i^*"']&
\Sp^{\Z/2}\ar[d,"i^*"]
\\
\Sp^{BS^1}\ar[r,"(-)^{tS^1}"]&
\Sp.
\end{tikzcd}
\]
\end{prop}
\begin{proof}
Consider the appropriate adjoint squares of the squares in Proposition \ref{TCR.16},
and compare the cofiber sequences $(-)_{hC_p}\to (-)^{hC_p}\to (-)^{tC_p}$ and $(-)_{hC_p^\sigma} \to (-)^{hC_p^\sigma} \to (-)^{tC_p^\sigma}$ to obtain the left square.
Argue similarly for the right square,
but use Proposition \ref{TCR.22} instead.
\end{proof}

\begin{const}
\label{TCR.17}
The motivic purity transformation \cite{Ayo07} can be adapted to the equivariant homotopy theory as follows.
Let $f\colon X\to S$ be a finite covering in $\FinGpd$.
Consider the induced diagram
\[
\begin{tikzcd}
X
\ar[r,"a"]&
X\times_S X\ar[d,"p_1"']\ar[r,"p_2"]&
X\ar[d,"f"]
\\
&
X\ar[r,"f"]&
S,
\end{tikzcd}
\]
where $a$ is the diagonal morphism, and $p_1$ (resp.\ $p_2$) is the first (resp.\ second) projection.
We set $\Sigma_f:=p_{2\sharp}a_*$.
We have the natural transformation 
\[
f_\sharp \xrightarrow{\mathfrak{p}_f} f_* \Sigma_f \colon \SH(X)\to \SH(S)
\]
given by the composite
\[
f_\sharp \xrightarrow{\simeq} f_\sharp p_{1*}a_*\to f_*p_{2\sharp }a_*.
\]
\end{const}

Recall that $i\colon *\to B(\Z/2)$ denotes the unique morphism.

\begin{prop}
\label{TCR.18}
The functor $\Sigma_i$ is equivalent to the identity functor,
and the natural transformation $\mathfrak{p}_i\colon i_\sharp \to i_*\Sigma_i$ is an equivalence.
\end{prop}
\begin{proof}
Let $r\colon \Z/2\to *$ be the unique map of finite groupoids,
and let $a\colon *\to \Z/2$ be the inclusion to the first point.
The functor $a_*\colon \Sp\to \Sp\times \Sp$ is $(\id,0)$,
and the functor $r_\sharp \colon \Sp\times \Sp\to \Sp$ is the direct sum functor.
Hence $\Sigma_i:=r_\sharp a_*$ is equivalent to the identity functor.

The pair of functors $(i^*,\Phi^{\Z/2})$ is conservative.
Since $\Phi^{\Z/2}i_\sharp \simeq \Phi^{\Z/2}i_*\simeq 0$ by \cite[Proposition A.2.7(3),(5)]{THR},
it suffices to show that $i^*\mathfrak{p}_i$ is an equivalence.
Using Proposition \ref{TCR.15},
it suffices to show that $\mathfrak{p}_r$ is an equivalence.
This can be shown directly using the following description in \cite[Proposition A.1.5]{THR}:
For a finite set $X$ with $n$ elements, $\SH(X)$ is equivalent to the product of $n$ copies of $\Sp$.
\end{proof}

\begin{const}
\label{TCR.19}
Consider the composite functor
\begin{equation}
\label{TCR.19.2}
\Sigma_i\colon \Sp^{BS^1} \xrightarrow{a_*} \Sp^{BS^1} \times \Sp^{BS^1} \xrightarrow{r_\sharp} \Sp^{BS^1},
\end{equation}
where $a_*:=(\id,0)$, and $r_\sharp$ is the direct sum functor.
As in Construction \ref{TCR.17},
we have the natural transformation
\begin{equation}
\label{TCR.19.1}
i_\sharp \xrightarrow{\mathfrak{p}_f} i_* \Sigma_i
\colon
(\Sp^{\Z/2})^{BS^\sigma}
\to
\Sp^{BS^1}
\end{equation}
\end{const}

\begin{prop}
\label{TCR.20}
The functor \eqref{TCR.19.2} is equivalent to the identity functor,
and the natural transformation \eqref{TCR.19.1} is an equivalence.
\end{prop}
\begin{proof}
The descriptions of $a_*$ and $r_\sharp$ show that \eqref{TCR.19.2} is equivalent to the identity functor.
Since $q^*\colon \Sp^{BS^1}\to \Sp$ is conservative,
we can reduce the second claim to Proposition \ref{TCR.18} by Proposition \ref{TCR.15}.
\end{proof}

\begin{prop}
\label{TCR.21}
There are induced commutative squares
\[
\begin{tikzcd}
\Sp^{BS^1}\ar[r,"(-)_{hC_p}"]\ar[d,"i_\sharp"']&
\Sp^{BS^1}\ar[d,"i_\sharp"]
\\
(\Sp^{\Z/2})^{BS^\sigma}
\ar[r,"(-)_{hC_p^\sigma}"]&
(\Sp^{\Z/2})^{BS^\sigma},
\end{tikzcd}
\quad
\begin{tikzcd}
\Sp^{BS^1}\ar[r,"(-)^{hC_p}"]\ar[d,"i_*"']&
\Sp^{BS^1}\ar[d,"i_*"]
\\
(\Sp^{\Z/2})^{BS^\sigma}
\ar[r,"(-)^{hC_p^\sigma}"]&
\Sp^{\Z/2},
\end{tikzcd}
\]
\[
\begin{tikzcd}
\Sp^{BS^1}\ar[r,"(-)_{hS^1}"]\ar[d,"i_\sharp"']&
\Sp\ar[d,"i_\sharp"],
\\
(\Sp^{\Z/2})^{BS^\sigma}
\ar[r,"(-)_{hS^\sigma}"]&
\Sp^{\Z/2},
\end{tikzcd}
\quad
\begin{tikzcd}
\Sp^{BS^1}\ar[r,"(-)^{hS^1}"]\ar[d,"i_*"']&
\Sp\ar[d,"i_*"],
\\
(\Sp^{\Z/2})^{BS^\sigma}
\ar[r,"(-)^{hS^\sigma}"]&
\Sp^{\Z/2}.
\end{tikzcd}
\]
\end{prop}
\begin{proof}
The right squares commute since their left adjoint squares commute.
Together with Propositions \ref{TCR.18} and \ref{TCR.20},
we see that the left squares commute too.
\end{proof}

\begin{prop}
\label{TCR.29}
There are induced commutative squares
\[
\begin{tikzcd}
\Sp^{BS^1}\ar[r,"(-)^{tC_p}"]\ar[d,"i_\sharp"']&
\Sp^{BS^1}\ar[d,"i_\sharp"]
\\
(\Sp^{\Z/2})^{BS^\sigma}
\ar[r,"(-)^{tC_p^\sigma}"]&
(\Sp^{\Z/2})^{BS^\sigma},
\end{tikzcd}
\quad
\begin{tikzcd}
\Sp^{BS^1}\ar[r,"(-)^{tC_p}"]\ar[d,"i_*"']&
\Sp^{BS^1}\ar[d,"i_*"]
\\
(\Sp^{\Z/2})^{BS^\sigma}
\ar[r,"(-)^{tC_p^\sigma}"]&
(\Sp^{\Z/2})^{BS^\sigma}.
\end{tikzcd}
\]
\end{prop}
\begin{proof}
Argue as in Proposition \ref{TCR.28}, but use Proposition \ref{TCR.21} instead.
\end{proof}

\begin{prop}
\label{TCR.12}
There is an adjunction
\[
i^*: \RCycSp \rightleftarrows \CycSp : i_*
\]
satisfying the following properties:
\begin{enumerate}
\item For $Y\in \RCycSp$,
the Frobenius $i^*Y\to (i^*Y)^{tC_p}$ is identified with $i^*\varphi$ if $\varphi\colon Y\to Y^{tC_p^\sigma}$ is the Frobenius.
\item For $X\in \CycSp$, the Frobenius $i_*X \to (i_*X)^{tC_p^\sigma}$ is identified with $i_*\varphi$ if $\varphi\colon X\to X^{tC_p}$ is the Frobenius.
\item $i^*$ is symmetric monoidal.
\end{enumerate}
\end{prop}
\begin{proof}
The functor $i^*\colon\RCycSp\to \CycSp$ is obtained by taking lax equalizers to the rows of the diagram
\begin{equation}
\label{TCR.12.1}
\begin{tikzcd}[column sep=large]
(\Sp^{\Z/2})^{BS^\sigma}\ar[r,shift left=0.5ex,"\id"]\ar[r,shift right=0.5ex,"\prod_p (-)^{tC_p^\sigma}"']\ar[d,"i^*"']&
\prod_p(\Sp^{\Z/2})^{BS^\sigma}\ar[d,"i^*"]
\\
\Sp^{BS^1}\ar[r,shift left=0.5ex,"\id"]\ar[r,shift right=0.5ex,"\prod_p (-)^{tC_p}"']&
\prod_p\Sp^{BS^1},
\end{tikzcd}
\end{equation}
whose two squares commute by Proposition \ref{TCR.28}.
The two squares in the induced diagram
\begin{equation}
\label{TCR.12.2}
\begin{tikzcd}[column sep=large]
\Sp^{BS^1}\ar[r,shift left=0.5ex,"\id"]\ar[r,shift right=0.5ex,"\prod_p (-)^{tC_p}"']\ar[d,"i_*"']&
\prod_p\Sp^{BS^1}\ar[d,"i_*"]
\\
(\Sp^{\Z/2})^{BS^\sigma}\ar[r,shift left=0.5ex,"\id"]\ar[r,shift right=0.5ex,"\prod_p (-)^{tC_p^\sigma}"']&
\prod_p(\Sp^{\Z/2})^{BS^\sigma}
\end{tikzcd}
\end{equation}
commute by Proposition \ref{TCR.29}.
Hence we see that a right adjoint $i_*$ is obtained by taking lax equalizers to the rows of \eqref{TCR.12.2} since we can check the counit-unit identities pointwise.
This implies the claims (1) and (2).

Recall from \cite[Construction IV.2.1(ii)]{NS} the following fact:
Let $\cC$, $\cD$, and $\cE$ be symmetric monoidal $\infty$-categories.
If $F\colon \cC\to \cD$ (resp.\ $G\colon \cC\to \cD$) is a symmetric (resp.\ lax symmetric) monoidal functor,
then having a symmetric monoidal functor $\cE\to \LEq(F,G)$ is equivalent to having a symmetric monoidal functor $H\colon \cE\to \cF$ and a lax symmetric monoidal transformation $F\circ H\to G\circ H$.

The canonical functor $\RCycSp\to (\Sp^{\Z/2})^{BS^\sigma}$ is symmetric monoidal by the above paragraph,
and one can show that the forgetful functor
$i^*\colon (\Sp^{\Z/2})^{BS^\sigma} \to \Sp^{BS^1}$ is symmetric monoidal using the descriptions of the symmetric monoidal structures on $\Sp^{BS^1}$ and $(\Sp^{\Z/2})^{BS^\sigma}$ obtained by \cite[Construction IV.2.1(1)]{NS} and \cite[Example 5.13]{QS1}.
Hence the composite functor $\RCycSp\to \Sp^{BS^1}$ is symmetric monoidal too.
Furthermore,
we have the lax symmetric natural transformation of the two composite functors
\[
\begin{tikzcd}
\RCycSp\ar[r]&
(\Sp^{\Z/2})^{BS^\sigma}\ar[r,"i^*"]&
\Sp^{\Z/2}\ar[r,shift left=0.5ex,"\id"]\ar[r,shift right=0.5ex,"\prod_p (-)^{tC_p}"']&
\prod_p\Sp^{BS^1}
\end{tikzcd}
\]
using \eqref{TCR.12.1} and the above paragraph.
We deduce the claim (3) using the above paragraph again.
\end{proof}

\begin{prop}
\label{TCR.23}
For $X\in \CycSp$ and $Y\in \RCycSp$,
there are natural equivalences
\begin{gather*}
i^*\TCR(Y)
\simeq
\TC(i^*Y),
\\
i_*\TC(X)
\simeq
\TCR(i_*X).
\end{gather*}
\end{prop}
\begin{proof}
Using the descriptions of $i^*Y\to (i^*Y)^{tC_p}$ and $i_*X\to (i_*X)^{tC_p^\sigma}$ in Proposition \ref{TCR.12},
the claim follows from \cite[Corollary 1.5]{NS}, and \cite[Theorem C(3)]{QS2},
and Propositions \ref{TCR.22} and \ref{TCR.21}.
\end{proof}

\begin{prop}
\label{TCR.27}
Let $(A,P)$ be a log ring with involution.
Then there is a natural equivalence of cyclotomic spectra
\[
i^*\THR(A,P)
\simeq
\THH(i^*A,i^*P)
\]
and hence an equivalence of spectra
\[
i^*\TCR(A,P)
\simeq
\TCR(i^*A,i^*P).
\]
\end{prop}
\begin{proof}
Recall that we have a natural equivalence of spectra $i^*\THR(A)\simeq \THH(i^*A)$ by \cite[Proposition 3.4.7]{THR}.
Apply $i^*$ to \eqref{TCR.0.1},
and compare this with the square in \cite[p.\ 342]{NS} to show that $i^*\THR(A)\xrightarrow{i^*\varphi_p} i^*(\THR(A)^{tC_p^\sigma})$ can be identified with $\THH(A)\xrightarrow{\varphi_p} \THH(A)^{tC_p}$.
We also need Proposition \ref{TCR.28} here.
Together with Proposition \ref{TCR.12}(2),
we have an equivalence of cyclotomic spectra $i^*\THR(A)\simeq \THH(A)$.

On the other hand, if we apply $i^*$ to \eqref{TCR.3.1} and \eqref{TCR.4.1},
then we get $\varphi_p$ for $\Sphere[\Bcy P]$ and $\Sphere[\Brep P]$ by Proposition \ref{drep.3} by Proposition \ref{TCR.28}.
Together with Proposition \ref{TCR.12}(2),
we have equivalences of cyclotomic spectra $i^*\Sphere[\Bdi P]\simeq \Sphere[\Bcy P]$ and $i^*\Sphere[\Bdrep P]\simeq \Sphere[\Brep P]$.
Proposition \ref{TCR.12}(3) finishes the proof.
\end{proof}

\begin{prop}
\label{TCR.26}
There is a commutative square
\[
\begin{tikzcd}
\CycSp\ar[d]\ar[r,"i_*"]&
\RCycSp\ar[d]
\\
\Sp\ar[r,"i_*"]&
\Sp^{\Z/2},
\end{tikzcd}
\]
where the vertical arrows are the forgetful functors.
\end{prop}
\begin{proof}
By Proposition \ref{TCR.16},
it suffices to show that there is a commutative square
\[
\begin{tikzcd}
\CycSp\ar[d]\ar[r,"i_*"]&
\RCycSp\ar[d]
\\
\Sp^{BS^1}\ar[r,"i_*"]&
(\Sp^{\Z/2})^{BS^\sigma},
\end{tikzcd}
\]
where the vertical arrows are the forgetful functors.
This can be obtained using \eqref{TCR.12.2}.
\end{proof}

\begin{prop}
\label{TCR.25}
Let $(A,P)$ be a log ring.
Then there is a natural equivalence of real cyclotomic spectra
\[
\THR(i_*A,\ul{P})
\simeq
i_*\THH(A,P).
\]
\end{prop}
\begin{proof}
Argue as in Construction \ref{thrlog.11} but use Proposition \ref{TCR.27} instead to construct a natural map of real cyclotomic spectra $\THR(i_*A,\ul{P})\to i_*\THR(A,P)$.
By Proposition \ref{TCR.5},
it suffices to show that this becomes an equivalence of $\Z/2$-spectra after applying the forgetful functor $\RCycSp\to \Sp^{\Z/2}$.
This is a consequence of Propositions \ref{thrlog.3} and \ref{TCR.26}.
\end{proof}

\begin{prop}
\label{TCR.24}
Let $X$ be a finite dimensional noetherian separated integral log scheme.
Then there is a natural equivalence of real cyclotomic spectra
\[
\THR(i_\sharp X)
\simeq
i_*\THH(X)
\]
and hence an equivalence of $\Z/2$-spectra
\[
\TCR(i_\sharp X)
\simeq
i_*\TC(X)
\]
and an equivalence of spectra
\[
\TCR(i_\sharp X)^{\Z/2}
\simeq
\TC(X).
\]
\end{prop}
\begin{proof}
Argue as in Proposition \ref{descent.12}, but use Propositions \ref{TCR.8} and \ref{TCR.25} to obtain the first equivalence.
For the remaining equivalences,
use Proposition \ref{TCR.23} and \cite[Proposition A.2.7(2)]{THR}.
\end{proof}

We will use the notation
\[
\THR^{\Z/2}(-):=\THR(-)^{\Z/2},
\text{ }
\TCR^{\Z/2}(-):=\TCR(-)^{\Z/2}.
\]

\section{Motivic representability}
\label{mot}

Throughout this section,
we fix a finite dimensional noetherian separated scheme $S$ (with the trivial log structure and involution).
The purpose of this section is to represent $\THH^{\Z/2}$ and $\TCR^{\Z/2}$ in $\logSH(S)$ and $\THR$ and $\TCR$ in a $\Ztwo$-equivariant analogue of $\logSH(S)$.

We begin with recalling the ingredients for defining $\logSH(S)$.
Let $\SmlSm/S$ denote the category of fs log schemes $Y$ of finite type over $S$ such that $Y\to S$ is log smooth and $\ul{Y}\to S$ is smooth.
A morphism in $\SmlSm/S$ is a \emph{strict Nisnevich cover} if it is strict and its underlying morphism of schemes is a Nisnevich cover.
We have the $\infty$-category of strict Nisnevich sheaves of spectra $\Sh_{sNis}(\SmlSm/S,\Sp)$.

For a smooth scheme $X$ of finite type over $S$ with a strict normal crossing divisor $D$,
let $(X,D)$ denote the fs log scheme with the underlying scheme $X$ and the compactifying log structure associated with the open immersion $X-D\to X$.
By \cite[Lemma A.5.10]{logDM},
every object of $\SmlSm/S$ arises as this form.
We often regard $\P^{n-1}$ as a closed complement of $\A^n$ in $\P^n$,
and we form $(\P^n,\P^{n-1})$.
We set $\square:=(\P^1,\infty)$.

Now, the \emph{$\infty$-category of logarithmic motivic $S^1$-spectra} is defined to be the localization
\[
\logSH_{S^1}(S)
:=
(\P^\bullet,\P^{\bullet-1})^{-1}\Sh_{sNis}(\SmlSm/S,\Sp),
\]
where $(\P^\bullet,\P^{\bullet-1})^{-1}$ denotes the class of projections $(\P^n,\P^{n-1})\times X\to X$ for all $X\in \SmlSm/S$ and integers $n\geq 1$.
This is one of the various models of $\logSH_{S^1}(S)$ in \cite[\S 3.4]{logSH}.
The \emph{$\infty$-category of logarithmic motivic spectra} is defined to be the $\infty$-category of $\P^1$-spectra in $\logSH_{S^1}(S)$:
\[
\logSH(S)
:=
\Sp_{\P^1}(\logSH_{S^1}(S)).
\]

\begin{rmk}
\label{mot.12}
In this section,
we often compute $\THR(X\times Y)$ in terms of $\THR(X)$ for noetherian separated fs log schemes $X$ and $Y$ such that there is a Zariski covering $\{\Spec(\Z[M_i],P_i)\to Y\}_{i\in I}$ for some homomorphisms of monoids $P_i\to M_i$.
In view of Proposition \ref{thrlog.8},
the base $X$ is irrelevant for the computation.
We will often argue as if everything takes place on the spectral scheme $\Spec(\Sphere)$ instead of $X$ for notational convenience.
\end{rmk}

Recall that $S^\sigma$ is $S^1$ with the involution given by $e^{i\theta}\in S^1 \mapsto e^{-i\theta}$.
We have the functor
\[
\Sigma^\sigma
:=
\Sigma^\infty S^\sigma \wedge (-)
\colon
\Sp^{\Ztwo}\to \Sp^{\Ztwo}.
\]
Let us recall the computation of $\THR$ for the projective spaces with the trivial involutions as follows.

\begin{prop}
\label{mot.2}
Let $X$ be a noetherian separated fs log scheme.
Then there exists a natural equivalence of $\Ztwo$-spectra
\[
\THR(X\times \P^n)
\simeq
\left\{
\begin{array}{ll}
\THR(X) \oplus \bigoplus_{j=1}^{\lfloor n/2 \rfloor} i_*\THH(X) & \text{if $n$ is even},
\\
\THR(X) \oplus \bigoplus_{j=1}^{\lfloor n/2 \rfloor} i_*\THH(X) \oplus \Sigma^{n(\sigma-1)}\THR(X) & \text{if $n$ is odd}.
\end{array}
\right.
\]
\end{prop}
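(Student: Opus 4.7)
Plan: The plan is to apply strict Nisnevich hyperdescent (Corollary \ref{descent.9} together with Proposition \ref{descent.14}) to the standard affine cover $\{U_i=\{x_i\neq 0\}\cong \A^n\}_{i=0}^n$ of $\P^n$ (taken with trivial involution and trivial log structure), computing $\THR(X\times\P^n)$ as the limit of the associated Čech cube in $\Sp_{\Ztwo}$. Each $(k+1)$-fold intersection $U_I$ is isomorphic to $\A^{n-k}\times \mathbb{G}_m^{\,k}$ with chart by the free monoid $\N^{n-k}\oplus\Z^{k}$ with trivial involution, so by iterated application of Proposition \ref{thrlog.8},
\[
\THR(X\times U_I) \simeq \THR(X)\wedge \THR(\Sphere[\N])^{\wedge(n-k)}\wedge \THR(\Sphere[\Z])^{\wedge k}.
\]

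Next I would insert explicit formulas for the building blocks. Example \ref{thrlog.10} gives $\THR(\Sphere[\N])\simeq \Sphere\oplus \bigoplus_{d\geq 1}\Sphere[S^\sigma]$ as $\Ztwo$-spectra, and a parallel computation via Propositions \ref{drep.2} and \ref{dih.25} (together with the identification $|N^\sigma\Z|\simeq S^\sigma$ for $\Z$ with trivial involution) yields $\THR(\Sphere[\Z])\simeq \bigoplus_{d\in\Z}\Sphere[S^\sigma]$. The Čech face maps, coming from the inclusions $\N\hookrightarrow\Z$ of the form $d\mapsto \pm d$ according to which coordinate is being inverted, become summand inclusions on $\THR$. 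Substituting splits the entire Čech cube into a direct sum of ``graded sub-cubes'' indexed by multi-degrees $\mathbf{d}=(d_1,\ldots,d_n)\in\Z^n$ (with $d_i\geq 0$ on the $\A$-axes and unrestricted on the $\mathbb{G}_m$-axes), whose limits can be computed separately. Each face map between adjacent vertices of a sub-cube is either an equivalence or the canonical inclusion $\Sphere\hookrightarrow\Sphere[S^\sigma]$, whose fiber contributes a shift $\Omega\Sigma^\sigma=\Sigma^{\sigma-1}$.

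Analyzing the limit of each sub-cube, the $\mathbf{d}=0$ sub-cube is almost constant and contributes one copy of $\THR(X)$ together with an excess of $\Sigma^\sigma$-twists that, after Čech totalization, assembles via Proposition \ref{descent.12} into $\lfloor n/2\rfloor$ copies of $i_*\THH(X)$ plus (in odd $n$) one leftover $\Sigma^{n(\sigma-1)}\THR(X)$; for multi-degrees $\mathbf{d}\neq 0$, the sub-cubes either vanish upon totalization or pair up under $\mathbf{d}\leftrightarrow -\mathbf{d}$ (induced by the involution on $\Bdi\Z$) into the same $i_*$-summands. As a sanity check, the $n=1$ case follows directly from the Mayer--Vietoris fiber square: the $\THR(X)$-summand is the diagonal at $d=0$, and the $\Sigma^{\sigma-1}\THR(X)$-summand arises as the $\Omega$-shift of the $\Sigma^\sigma\THR(X)$-cokernel of the inclusion $\Sphere\hookrightarrow\Sphere[S^\sigma]$ at the $d=0$ map $\THR(\Sphere[\N])\to\THR(\Sphere[\Z])$.

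The main obstacle is the combinatorial bookkeeping for general $n$: verifying the count of $\lfloor n/2\rfloor$ copies of $i_*\THH(X)$ and the parity-dependent appearance of $\Sigma^{n(\sigma-1)}\THR(X)$, since the multi-degree sub-cubes interact in a delicate way through the involution on $\Bdi\Z$ (essentially negation of the multi-index). Consistency with the underlying-spectrum computation $\THH(X\times\P^n)\simeq\THH(X)^{\oplus (n+1)}$ (via Proposition \ref{siso.7} and the classical projective bundle formula for $\THH$) provides a useful check on the total count of summands throughout.
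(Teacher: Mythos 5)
Your route is genuinely different from the paper's: the paper reduces to the base $\Sphere$ via Remark \ref{mot.12} (i.e., Proposition \ref{thrlog.8}) and then simply cites the computation of $\THR(\P^n)$ from [THR, Theorem 5.2.6]. What you propose is, in effect, a reproof of that cited theorem by strict Nisnevich (Zariski) descent for the standard cover, multigraded decomposition of the \v{C}ech cube, and termwise analysis. The setup is sound: the identification $\THR(X\times U_I)\simeq \THR(X)\wedge\THR(\Sphere[\N])^{\wedge(n-k)}\wedge\THR(\Sphere[\Z])^{\wedge k}$ follows from Proposition \ref{thrlog.8} and the fact that $\Bdi$ preserves finite products, the formulas for $\THR(\Sphere[\N])$ and $\THR(\Sphere[\Z])$ are correct, and your $n=1$ check is right. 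This is essentially the strategy the paper itself uses in the harder Theorem \ref{mot.1} (for $(\P^n,\P^{n-1})$, where the graded pieces degenerate completely), so the method is appropriate; the difference is that for $\P^n$ the graded pieces do \emph{not} all collapse, and that is exactly where your argument stops being a proof.

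The genuine gap is the step that actually produces the asserted answer, and the one concrete mechanism you offer for it is wrong. You claim the summands $i_*\THH(X)$ arise because the sub-cubes for $\mathbf{d}\neq 0$ ``pair up under $\mathbf{d}\leftrightarrow-\mathbf{d}$, induced by the involution on $\Bdi\Z$.'' But $\Z$ here carries the trivial involution, and the dihedral involution on $\Ndi P$ for trivial $w$ sends $(x_0,x_1,\ldots,x_q)$ to $(x_0,x_q,\ldots,x_1)$: it permutes the entries and hence \emph{preserves} the total degree $x_0+\cdots+x_q$. So the $\Ztwo$-action respects the multigrading and cannot interchange the $\mathbf{d}$ and $-\mathbf{d}$ sub-cubes; there is no induction along a free $\Ztwo$-orbit of degrees. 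The induced summands $i_*\THH(X)$ (equivalently $\Ztwo_+\wedge\THH(X)$) have to be extracted from the equivariant cell structure of the total fibers within fixed degrees --- concretely from the free cells appearing in smash powers of $S^\sigma_+$ relative to the sub-cube faces --- together with the identification of the resulting periodic twists via $\Sigma^{\sigma}$. That is precisely the ``combinatorial bookkeeping'' you defer, and it is the entire content of the statement: the count $\lfloor n/2\rfloor$, the parity-dependent $\Sigma^{n(\sigma-1)}\THR(X)$ term, and the fact that the remaining pieces assemble into \emph{induced} spectra are not consequences of anything you have established. (Your consistency check against $\THH(X\times\P^n)\simeq\THH(X)^{\oplus(n+1)}$ only constrains the underlying spectrum and cannot distinguish, e.g., $i_*\THH(X)$ from $\THH(X)\oplus\Sigma^{k(\sigma-1)}\THR(X)$-type alternatives.) As written, the proposal should either carry out this degreewise equivariant analysis in full or, as the paper does, invoke [THR, Theorem 5.2.6] directly.
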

\begin{proof}
Following Remark \ref{mot.12},
we can argue as if everything takes place on $\Sphere$ instead of $X$.
Then the claim is due to \cite[Theorem 5.2.6]{THR}.
\end{proof}

Our next goal is to show that $\THR^{\Z/2}$ and $\TCR^{\Z/2}$ are representable in $\logSH_{S^1}(S)$.
For this,
we need the notion of cubes \cite[Definition 6.1.1.2]{HA},
which we recall as follows.

For a set $I$,
let $\bP(I)$ denote the set of subsets of $I$.
We impose the partially ordered set structure on $\bP(I)$ with respect to the inclusion,
and we regard $\bP(I)$ as the associated category.
For an $\infty$-category $\cC$,
an \emph{$I$-cube in $\cC$} is a functor
\[
Q
\colon
\bP(I)
\to
\cC.
\]
If the cardinal of $I$ is an integer $n$,
then an $I$-cube is called an \emph{$n$-cube}.
The \emph{total cofiber of $Q$} is
\[
\tcofib(Q)
:=
\cofib(\colim(Q|_{\bP(I)-\{I\}}) \to Q(I))
\]
whenever the colimit and cofiber exist.
The \emph{total fiber of $Q$} is
\[
\tfib(Q)
:=
\fib(Q(\emptyset)\to \lim(Q|_{\bP(I)-\{\emptyset\}}))
\]
whenever the limit and fiber exist.

\begin{thm}
\label{mot.1}
Let $X$ be a noetherian separated fs log scheme with involution.
Then the induced map
\[
\THR(X)
\to
\THR(X\times (\P^n,\P^{n-1}))
\]
is an equivalence of real cyclotomic spectra,
where we impose the trivial involution on $(\P^n,\P^{n-1})$.
\end{thm}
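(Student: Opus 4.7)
The plan is to compute $\THR(X\times(\P^n,\P^{n-1}))$ as the total fiber of an explicit cube in $\Sp_{\Ztwo}$ and identify it with $\THR(X)$, building on Proposition~\ref{mot.2} and the strict Zariski descent established in Corollary~\ref{descent.9}. By Remark~\ref{mot.12}, the base $X$ contributes only passively, so I reduce to the universal case.

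I would use the standard affine cover of $(\P^n,\P^{n-1})$: one chart $U_0\simeq\A^n$ (the complement of $\P^{n-1}$, with trivial log structure) and $n$ charts $U_i\simeq(\A^n,V_i)$ for $i\geq 1$ with codimension-one log structure along a hyperplane. Their intersections $U_I := \bigcap_{i\in I} U_i$ are products of $\A^1$, $\A^1\setminus\{0\}$, and $(\A^1,0)$, with corresponding log structures. The \v{C}ech $(n+1)$-cube
\[
I \mapsto \THR(X\times U_I),\qquad \emptyset\neq I\subseteq\{0,1,\ldots,n\},
\]
computes $\THR(X\times(\P^n,\P^{n-1}))$ as its total fiber by strict Zariski descent. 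Proposition~\ref{thrlog.8}, Example~\ref{thrlog.10}, together with Propositions~\ref{drep.2} and~\ref{dih.15}, then allow me to express each vertex in closed form in terms of $\THR(X)$ smashed with dihedral (replete) bar constructions of $\N$ and $\Z$.

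The main obstacle is then to show that this \v{C}ech total fiber collapses to $\THR(X)$ via the projection, a dihedral analogue of the classical vanishing $H^i(\P^n,\mathcal{O})=0$ for $i>0$. Here the log structure plays the decisive role: the distinction between $\Bdi\N$ (yielding $\Sphere\oplus\bigoplus_{d\geq 1}\Sphere[S^\sigma]$) and $\Bdrep\N$ (yielding $\bigoplus_{d\geq 0}\Sphere[S^\sigma]$) supplies exactly the shift needed for the ``extra'' summands that appear in Proposition~\ref{mot.2} (the $i_*\THH(X)$ copies and the $\Sigma^{n(\sigma-1)}\THR(X)$ term) to telescope away across adjacent edges of the cube. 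Setting up this combinatorial cancellation cleanly at the level of the $(n+1)$-cube, and in particular verifying that the maps induced by intersecting charts with $U_0$ (trivial log) versus $U_i$ with $i \geq 1$ (replete log) match the shift relating $\Bdi$ to $\Bdrep$, is the technical heart of the proof.
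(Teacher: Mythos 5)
Your setup coincides with the paper's: reduce to the universal base via Remark \ref{mot.12}, take the standard cover $U_0,\ldots,U_n$ of $(\P^n,\P^{n-1})$ with $U_0\simeq\A^n$ carrying the trivial log structure, form the \v{C}ech $(n+1)$-cube, and express each vertex $\THR(U_I)$ through Proposition \ref{thrlog.8} and the computations of $\Bdi\N$, $\Bdi\Z$, $\Bdrep\N$. Up to that point you are on the paper's track. But what you call ``the technical heart of the proof'' --- the collapse of the total fiber --- is exactly the step you have not supplied, and the mechanism you sketch for it is not the one that works. You propose to match the extra summands of Proposition \ref{mot.2} (the $i_*\THH$ copies and $\Sigma^{n(\sigma-1)}\THR$) against the $\Bdi$-versus-$\Bdrep$ shift and let them ``telescope away across adjacent edges.'' Proposition \ref{mot.2} computes $\THR$ of $\P^n$ with its trivial log structure, not of the charts $U_I$ of $(\P^n,\P^{n-1})$, and a summand-by-summand cancellation across edges of an $(n+1)$-cube is not something you can organize without first choosing a decomposition that is natural in $I$; as stated, this is an intention, not an argument.

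The missing idea is the weight decomposition. Writing $P_J$ for the chart monoids of the $U_I$ (products of copies of $\N$ and $\Z$, intersected with the hyperplane monoid coming from the log structure when $0\notin I$), each vertex decomposes as $\THR(U_I)\simeq\Sphere[\Phi(I)]$ with
\[
\Phi(I)=\coprod_{x\in\Z^n}\Phi(I;x),
\]
where $\Phi(I;x)$ is either empty or a product of points and copies of $S^\sigma$ determined coordinatewise by whether $x_i=0$ and $i\in I$. This is the dihedral analogue of the monomial grading in the classical \v{C}ech computation of $H^*(\P^n,\cO)$ that you allude to. The total cofiber then splits as a coproduct over $x\in\Z^n$, and for each fixed $x$ the cube $\Phi(-;x)$ is degenerate in some direction: if $x\neq 0$ pick $i$ with $x\notin P_i$, and if $x=0$ use the direction $0$; in that direction every edge map is a $\Ztwo$-homeomorphism, so the total cofiber of that summand vanishes by the standard categorical fact (\cite[Proposition A.6.5]{logSH}). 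Without this decomposition-plus-degenerate-direction argument, your outline does not establish the vanishing, so the proof is incomplete at its essential step.
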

\begin{proof}
By Proposition \ref{TCR.5},
it suffices to show that the map is an equivalence of $\Z/2$-spectra.

Following Remark \ref{mot.12},
we will argue for $\Sphere$ instead of $X$.
Let $\ul{U_0},\ldots,\ul{U_n}$ be the standard cover of $\P_{\Sphere}^n$,
and we set $U_i:=(\P_{\Sphere}^n,\P_{\Sphere}^{n-1})\times_{\P_{\Sphere}^n} \ul{U_i}$.
For every nonempty subset $I\subset [n]$,
we set $U(I):=U_I:=\bigcap_{i\in I} U_i$.
We also set $U_\emptyset:=\Sphere$.
We regard $U$ as an $(n+1)$-cube.
We need to show $\tcofib(\THR(U))\simeq 0$.

For notational convenience,
we consider the commutative monoids
\begin{gather*}
P_j:=\{(x_1,\ldots,x_n)\in \Z^n : x_j\geq 0\}
\text{ for $j=1,\ldots,n$},
\\
P_0:=\{(x_1,\ldots,x_n)\in \Z^n : x_1+\ldots+x_n \leq 0\},
\\
F_0:=\{(x_1,\ldots,x_n)\in \Z^n : x_1+\ldots+x_n = 0\}.
\end{gather*}
For every subset $J\subset [n]$,
we set $P_J:=\bigcap_{j\in J}P_j$.
By convention,
we have 
$P_\emptyset:=\Z^n$.
There is a canonical equivalence
\[
\THR(U_I)
\simeq
\left\{
\begin{array}{ll}
\THR(\Sphere[P_{[n]-I}])& \text{if $0\in I$},
\\
\THR(\Sphere[P_{[n]-I}],P_{[n]-I}\cap F_0)
&\text{if $0\notin I$}.
\end{array}
\right.
\]
Each $P_J$ is isomorphic to products of finite copies of $\Z$ and $\N$.
The computations of $\Bdi \N$ and $\Bdi \Z$ in \cite[(4.12), Propositions 4.2.11, 4.2.12]{THR} and the fact that $\Bdi$ preserves finite products \cite[Proposition 4.2.4]{THR} yield a natural equivalence of $\Ztwo$-spectra
\[
\THR(U_I)
\simeq
\Sphere[\Phi(I)]
\]
with
\begin{gather*}
\Phi(I)
:=
\coprod_{(x_1,\ldots,x_n)\in \Z^n} \Phi(I;(x_1,\ldots,x_n)),
\\
\Phi(I;(x_1,\ldots,x_n))
:=
\left\{
\begin{array}{ll}
\Phi_1(I;x_1) \times \cdots \times \Phi_n(I;x_n)
&
\text{if $(x_1,\ldots,x_n)\in P_{[n]-I}$},
\\
\emptyset
&
\text{otherwise},
\end{array}
\right.
\\
\Phi_i(I;x_i)
:=
\left\{
\begin{array}{ll}
* & \text{if $x_i=0$ and $i\in I$},
\\
S^\sigma & \text{otherwise}.
\end{array}
\right.
\end{gather*}
Since there is a natural decomposition
\[
\tcofib(\Phi)
\simeq
\coprod_{x\in \Z^n}\tcofib(\Phi(-;x)),
\]
it suffices to show $\tcofib(\Phi(-;x))\simeq 0$ for every $x\in \Z^n$.

If $x\neq (0,\ldots,0)$,
then there exists $i\in [n]$ such that $x\notin P_i$.
The induced map $\Phi(I;x) \to 
 \Phi(I\cup \{i\};x)$
is a $\Ztwo$-homeomorphism whenever $i\notin I$.
Together with the categorical result \cite[Proposition A.6.5]{logSH},
we have $\tcofib(\Phi(-;x))\simeq 0$.

If $x=(0,\ldots,0)$,
then the induced map $\Phi(I;x)\to \Phi(I\cup \{0\};x)$ is a $\Ztwo$-homeomorphism whenever $0\notin I$.
We have $\tcofib(\Phi(-;x))\simeq 0$ similarly.
\end{proof}

The $\THH$ and $\TC$ parts of the following result are proved in \cite[Theorem 8.4.4]{logSH}.

\begin{thm}
\label{mot.3}
Let $S$ be a noetherian separated scheme.
Then the sheaves
\[
\THH,
\THR^{\Z/2},
\TC,
\TCR^{\Z/2}
\in
\Sh_{sNis}(\SmlSm/S,\Sp)
\]
are in $\logSH_{S^1}(S)$.
\end{thm}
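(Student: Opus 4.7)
The plan is to verify the two defining properties of objects of $\logSH_{S^1}(S)$, namely strict Nisnevich descent and $(\P^n,\P^{n-1})$-invariance, for each of $\THH$ and $\THO$. For every $X\in \SmlSm/S$ I would implicitly equip $X$ with the trivial involution, so that $\THR(X)\in \Sp_{\Ztwo}$ is defined via Definition \ref{descent.15} and $\THO(X):=\THR(X)^{\Ztwo}$ makes sense as a presheaf on $\SmlSm/S$.

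For $\THH$, both strict Nisnevich descent and $(\P^n,\P^{n-1})$-invariance are already established in \cite[Theorem 8.4.4]{logSH}, so there is nothing new to prove.

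For $\THO$, I would handle descent first. A strict Nisnevich cover $\{U_i\to X\}$ in $\SmlSm/S$, when each $U_i$ and $X$ is endowed with the trivial involution, is in particular a strict isovariant \'etale cover in the sense of \S \ref{thrscheme}. By Corollary \ref{descent.9} combined with Proposition \ref{descent.14}, the $\Sp_{\Ztwo}$-valued presheaf $\THR$ is a strict Nisnevich hypersheaf on $\SmlSm/S$. Since the right adjoint $(-)^{\Ztwo}\colon \Sp_{\Ztwo}\to \Sp$ preserves limits, postcomposing shows that $\THO\in \Sh_{sNis}(\SmlSm/S,\Sp)$. Next, for $(\P^n,\P^{n-1})$-invariance, Theorem \ref{mot.1} applied to $X$ with trivial involution and to $(\P^n,\P^{n-1})$ with trivial involution yields an equivalence
\[
\THR(X)\xrightarrow{\simeq} \THR(X\times (\P^n,\P^{n-1}))
\]
in $\Sp_{\Ztwo}$. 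Applying $(-)^{\Ztwo}$ to this equivalence gives the desired equivalence $\THO(X)\xrightarrow{\simeq}\THO(X\times (\P^n,\P^{n-1}))$.

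The substantive content has already been absorbed into Theorem \ref{mot.1}, whose cube-decomposition argument carries out the combinatorial cofiber vanishing on the $\Ztwo$-equivariant level. Given that input, the deduction for $\THO$ is purely formal via the limit-preserving fixed-point functor, and I do not anticipate any real obstacle beyond being careful to equip everything in $\SmlSm/S$ with the trivial involution.
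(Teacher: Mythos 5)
Your proposal is correct and matches the paper's argument: the paper's proof is simply ``Immediate from Theorem \ref{mot.1},'' with the $\THH$ case delegated to \cite[Theorem 8.4.4]{logSH} and the $\THO$ case obtained by applying the limit-preserving functor $(-)^{\Ztwo}$ to the equivalence of Theorem \ref{mot.1}, exactly as you do. Your extra paragraph on strict Nisnevich descent via Corollary \ref{descent.9} and Proposition \ref{descent.14} is fine but not strictly needed, since the statement already places $\THH$ and $\THO$ in $\Sh_{sNis}(\SmlSm/S,\Sp)$.
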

\begin{proof}
Immediate from Theorem \ref{mot.1}.
\end{proof}

Since $\THR$ is not a sheaf of spectra but a sheaf of $\Ztwo$-spectra,
we cannot directly apply the results in \cite{logDM} and \cite{logSH} to $\THR$.
Instead, we will often use the fact that the pair $(i^*,(-)^{\Ztwo})$ is conservative to reduce to the case of $\THH$ and $\THR^{\Z/2}$.

For any vector bundle $\cE\to X$ with $X\in \SmlSm/S$,
the \emph{real topological Hochschild homology of the Thom space $\Th(\cE)$} is defined to be
\begin{equation}
\label{mot.15.1}
\THR(\Th(\cE))
:=
\fib(\THR(\cE)
\to
\THR(\Blow_{Z}\cE,E)),
\end{equation}
where $Z$ is the zero section of $\cE$,
and $E$ is the exceptional divisor.

\begin{prop}
\label{mot.15}
With the above notation,
there are natural equivalences of $\Ztwo$-spectra
\begin{gather*}
\THR(\Th(\cE))
\simeq
\fib(
\THR(\P(\cE\oplus \cO))
\to
\THR(\P(\cE))),
\\
\TCR(\Th(\cE))
\simeq
\fib(
\TCR(\P(\cE\oplus \cO))
\to
\TCR(\P(\cE))).
\end{gather*}
\end{prop}
\begin{proof}
We focus on the case of $\THR$ since the proofs are similar.

Let $Y$ and $Y'$ be the blow-up of $\cE$ and $\P(\cE\oplus \cO)$ along the zero section,
and let $E$ and $E'$ be the exceptional divisors.
By the proof of \cite[Proposition 7.4.5]{logDM},
there exists a commutative diagram
\[
\begin{tikzcd}
(Y,E)\ar[d]\ar[r]&
(Y',E')\ar[d]\ar[r,leftarrow]&
\P(\cE)
\\
\cE\ar[r]&
\P(\cE\oplus \cO)
\end{tikzcd}
\]
such that the induced maps
\begin{gather*}
\fib(\cF(\P(\cE\oplus \cO))\to \cF(Y',E'))
\to
\fib(\cF(\cE)\to \cF(Y,E)),
\\
\cF(Y',E')
\to
\cF(\P(\cE))
\end{gather*}
are equivalences for $\cF\in \logSH_{S^1}(S)$.
In particular,
these are equivalences for $\THH$ and $\THR^{\Z/2}$ by Theorem \ref{mot.3}.
From this,
we obtain the desired equivalence.
\end{proof}

\begin{lem}
\label{mot.16}
Let $\cE_1\to X_1$ and $\cE_2\to X_2$ be vector bundles,
where $X_1,X_2\in \SmlSm/S$.
We regard $T:=\cE_1\times_S \cE_2$ as a vector bundle over $X_1\times_S X_2$.
We set
\[
T_1:=(\Blow_{Z_1} \cE_1,E_1)\times_S \cE_2,
\;
T_2:=\cE_1\times_S (\Blow_{Z_2}\cE_2,E_2),
\;
T_{12}:=T_1\times_S T_2,
\]
where $Z_1$ and $Z_2$ are the zero sections of $\cE_1$ and $\cE_2$.
Then there are natural equivalences of $\Ztwo$-spectra
\begin{gather*}
\THR(\Th(T))
\simeq
\tfib(\THR(C_3)),
\\
\TCR(\Th(T))
\simeq
\tfib(\TCR(C_3)),
\end{gather*}
where $C_3$ is the cartesian square
\[
\begin{tikzcd}
T_{12}\ar[d]\ar[r]&
T_1\ar[d]
\\
T_2\ar[r]&
T.
\end{tikzcd}
\]
\end{lem}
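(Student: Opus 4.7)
The plan is to reduce the equivariant identity to a motivic equivalence in $\logSH_{S^1}(S)$ and invoke the representability of $\THH$ and $\THO$ guaranteed by Theorem \ref{mot.3}. Since $(i^*,(-)^{\Ztwo})$ is conservative on $\Sp_{\Ztwo}$ (this follows from the conservativity of $(i^*,\Phi^{\Ztwo})$ noted in the text together with the isotropy separation square), and both functors preserve limits (hence total fibers), it suffices to prove the non-equivariant versions
\[
\THH(\Th(T))\simeq \tfib(\THH(C_3))
\quad\text{and}\quad
\THO(\Th(T))\simeq \tfib(\THO(C_3));
\]
here I use Proposition \ref{siso.7} to identify $i^*\THR$ with $\THH$ applied to the underlying log schemes (which carry the trivial involution in this setting).

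By Theorem \ref{mot.3}, both $\THH$ and $\THO$ lie in $\logSH_{S^1}(S)$, so the two displayed identities will follow once one establishes, inside $\logSH_{S^1}(S)$, the motivic equivalence
\[
\Th(T)\simeq \tcofib(C_3),
\]
where $\Th(T)$ is the log-motivic Thom space $\cofib((\Blow_Z T,E_T)\to T)$ implicit in \eqref{mot.15.1}.

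To prove this motivic equivalence, I would recognize $C_3$ as the product over $S$ of the two arrows $f_i\colon (\Blow_{Z_i}\cE_i,E_i)\to \cE_i$. In the stable symmetric monoidal $\infty$-category $\logSH_{S^1}(S)$, the total cofiber of such a product cube is the smash product of the individual cofibers, so
\[
\tcofib(C_3)\simeq \cofib(f_1)\wedge_S \cofib(f_2)\simeq \Th(\cE_1)\wedge_S \Th(\cE_2).
\]
Since $T=\cE_1\times_S \cE_2$ is the external direct sum of $\cE_1$ and $\cE_2$ pulled back to $X_1\times_S X_2$, the external Thom-smash formula in the log-motivic setting yields $\Th(T)\simeq \Th(\cE_1)\wedge_S \Th(\cE_2)$, completing the motivic identity.

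The hard part will be the external Thom-smash formula in $\logSH_{S^1}(S)$: the log schemes $(\Blow_Z T,E_T)$ and $(\Blow_{Z_1}\cE_1,E_1)\times_S (\Blow_{Z_2}\cE_2,E_2)$ are genuinely distinct (they have different underlying schemes and exceptional divisors), yet both must represent the same motivic Thom space. I would extract this comparison from the blow-up and projective bundle machinery in \cite{logDM} and \cite{logSH}, in the spirit of \cite[Proposition 7.4.5]{logDM} already invoked in the proof of Proposition \ref{mot.15}.
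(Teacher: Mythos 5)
Your proposal is correct and follows essentially the same route as the paper: reduce to $\THH$ and $\THO$ via the conservativity of $(i^*,(-)^{\Ztwo})$ and Theorem \ref{mot.3}, then invoke the log-motivic comparison between $\Th(T)$ and the total cofiber of $C_3$, which is exactly what the paper extracts from \cite[Construction 7.4.14, Proposition 7.4.15]{logDM} (via a chain of intermediate squares $C_1\leftarrow C_2\to C_3$ rather than your smash-decomposition phrasing). The "hard part" you correctly isolate — the external Thom-smash formula for the blow-up model of Thom spaces — is precisely the content of that cited proposition, so your deferral to the machinery of \cite{logDM} matches the paper's own argument.
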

\begin{proof}
Again,
we focus on $\THR$.
Consider the fs log schemes $T_I\in \SmlSm/S$ for every subset $I$ of $\{1,2,4\}$ in \cite[Construction 7.4.14]{logDM},
and consider the induced squares
\[
C_1:=
\begin{tikzcd}
T_4\ar[d]\ar[r]&
T_4\ar[d]
\\
T_4\ar[r]&
T,
\end{tikzcd}
\;\;\;
C_2:=
\begin{tikzcd}
T_{124}\ar[d]\ar[r]&
T_{14}\ar[d]
\\
T_{24}\ar[r]&
T.
\end{tikzcd}
\]
We have the induced maps of squares $C_1\leftarrow C_2\to C_3$.
By Theorem \ref{mot.3} and the proof of \cite[Proposition 7.4.15]{logDM},
the induced maps
\[
\tfib(\cF(C_1))\to \tfib(\cF(C_2))\leftarrow \tfib(\cF(C_3))
\]
are equivalences for $\cF:=\THH,\THR^{\Z/2}$.
This implies that we have a natural equivalence
\[
\tfib(\THR(C_1))\simeq \tfib(\THR(C_3)),
\]
which yields the desired equivalence.
\end{proof}

\begin{prop}
\label{mot.17}
For $X\in \SmlSm/S$ and integer $n\geq 0$,
there are natural equivalences of $\Z/2$-spectra
\begin{gather*}
\THR(\Th(X\times \A^n))
\simeq
\Sigma^{n(\sigma-1)}\THR(X),
\\
\TCR(\Th(X\times \A^n))
\simeq
\Sigma^{n(\sigma-1)}\TCR(X),
\end{gather*}
where we regard $X\times \A^n$ as the rank $n$ trivial bundle over $X$.
\end{prop}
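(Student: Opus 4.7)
The plan is to induct on $n$. The base cases $n=0$ and $n=1$ can be handled via Proposition \ref{mot.15}: for $n=0$ the rank-zero bundle yields $\P(\cE)=\emptyset$, giving $\THR(\Th(X))\simeq \THR(X)$; for $n=1$, Proposition \ref{mot.15} produces
\[
\THR(\Th(X\times\A^1))\simeq \fib(\THR(X\times\P^1)\to \THR(X)),
\]
and by Proposition \ref{mot.2} one identifies $\THR(X\times\P^1)\simeq \THR(X)\oplus \Sigma^{\sigma-1}\THR(X)$. The map in question is induced by the inclusion $X=X\times\P^0\hookrightarrow X\times\P^1$, and since the composite $X\hookrightarrow X\times\P^1\to X$ with the projection is the identity, it retracts the first summand, leaving the fiber $\Sigma^{\sigma-1}\THR(X)$.

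For the inductive step $n\geq 2$, I would apply Lemma \ref{mot.16} with $\cE_1=X\times\A^{n-1}$ over $X$ and $\cE_2=\A^1$ over $S$, so that $T=\cE_1\times_S\cE_2=X\times\A^n$ and $\THR(\Th(X\times\A^n))\simeq \tfib(\THR(C_3))$. The key observation is that every vertex of $C_3$ is a fibre product over $S$ of a log scheme on the $\cE_1$-side with one on the $\cE_2$-side (either $\A^1$ or $(\A^1,0)$, each admitting a monoid chart on $\N$). By Proposition \ref{thrlog.8}, together with the local-to-global argument of Remark \ref{mot.12} and the hypersheaf property of Proposition \ref{descent.16}, the $\THR$ of each vertex splits as a smash of $\Ztwo$-spectra, naturally in the cube maps. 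Hence $\THR(C_3)\simeq D_1\wedge D_2$ with
\[
D_1\colon \THR(X\times\A^{n-1})\to \THR(\Blow_{Z_1}(X\times\A^{n-1}),E_1), \qquad D_2\colon \THR(\A^1)\to \THR(\A^1,0).
\]
Since $\tfib=\Sigma^{-2}\tcofib$ for $2$-cubes in stable $\infty$-categories and total cofibre distributes over smash in each variable, $\tfib(D_1\wedge D_2)\simeq \tfib(D_1)\wedge \tfib(D_2)=\THR(\Th(X\times\A^{n-1}))\wedge \THR(\Th(\A^1))$. Feeding in the inductive hypothesis and the $n=1$ base case gives
\[
\THR(\Th(X\times\A^n))\simeq \Sigma^{(n-1)(\sigma-1)}\THR(X)\wedge \Sigma^{\sigma-1}\Sphere \simeq \Sigma^{n(\sigma-1)}\THR(X),
\]
as required.

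The main technical hurdle will be verifying the Künneth-style decomposition $\THR(C_3)\simeq D_1\wedge D_2$ at the level of $\Ztwo$-spectra. This is a naturality statement for Proposition \ref{thrlog.8} along the morphisms in $C_3$; it should reduce, via Remark \ref{mot.12} and Proposition \ref{descent.16}, to a check on the monoid chart morphism $(\N,0)\to (\N,\N)$ controlling the $\cE_2$-side, but some care is needed to ensure the factorization is compatible with both the horizontal and vertical maps of the cube simultaneously.
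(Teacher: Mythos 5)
Your proof is correct and follows essentially the same route as the paper: induction on $n$, Lemma \ref{mot.16} applied to $\cE_1=X\times\A^{n-1}$ and $\cE_2=\A^1$ over $S$, and the K\"unneth decomposition of the cube via Proposition \ref{thrlog.8} (argued locally as in Remark \ref{mot.12}). The only difference is cosmetic: the paper identifies the rank-one factor $\fib(\THR(\Sphere[\N])\to\THR(\Sphere[\N],\N))\simeq\Sigma^{\sigma-1}\Sphere$ directly from Example \ref{thrlog.10}, whereas you extract it from Propositions \ref{mot.15} and \ref{mot.2}; both work.
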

\begin{proof}
Again,
we focus on $\THR$.
We proceed by induction on $n$.
The claim is trivial if $n=0$.
Assume $n>0$.
We set $X_1:=X$,
$\cE_1:=X\times \A^{n-1}$,
$X_2:=S$,
and $\cE_2:=S\times \A^1$.
The square $C_3$ in Lemma \ref{mot.16} becomes
\[
\begin{tikzcd}
(\Blow_X(X\times \A^{n-1}),E_{n-1})\times (\A^1,0)\ar[r]\ar[d]&
(\Blow_X(X\times \A^{n-1}),E_{n-1})\times \A^1\ar[d]
\\
X\times \A^{n-1}\times (\A^1,0)\ar[r]&
X\times \A^{n-1}\times \A^1,
\end{tikzcd}
\]
where $E_{n-1}$ is the exceptional divisor.
By \cite[Proposition 5.2.4]{THR}, Proposition \ref{thrlog.8}, and Lemma \ref{mot.16},
we have a natural equivalence
\[
\THR(\Th(X\times \A^n))
\simeq
\THR(\Th(X\times \A^{n-1}))
\wedge
\fib(\THR(\Sphere[\N])\to \THR(\Sphere[\N],\N)).
\]
Combine this with Example \ref{thrlog.10} and use the induction hypothesis to conclude.
\end{proof}

Next, we are concerned about the Gysin cofiber sequence in logarithmic motivic homotopy theory.
For this,
we recall the deformation to the normal cone construction in the logarithmic setting as follows.
Assume that $X\in \SmlSm/S$ has the form $(\ul{X},Z_1+\cdots+Z_n)$,
and let $Z$ be a strict closed subscheme of $X$ such that $\ul{Z}$ is strict normal crossing with $Z_1+\cdots+Z_n$ in the sense of \cite[Definition 7.2.1]{logDM}.
The \emph{blow-up of $X$ along $Z$} is 
\[
\Blow_Z X:=(\Blow_{\ul{Z}}\ul{X},\widetilde{Z}_1+\cdots+\widetilde{Z}_n),
\]
where $\widetilde{Z}_i$ is the strict transform of $Z_i$ for $1\leq i\leq n$.
The \emph{normal bundle of $Z$ in $X$} is defined to be
\[
\Normal_Z X
:=
\Normal_{\ul{Z}} \ul{X} \times_{\ul{X}} X,
\]
where $\Normal_{\ul{Z}} \ul{X}$ denotes the normal bundle of $\ul{Z}$ in $\ul{X}$.
The \emph{deformation space associated with $Z\to X$} is defined to be
\[
\Deform_Z X
:=
\Blow_{Z\times \{0\}}(X\times \square)
-
\Blow_{Z\times \{0\}}(X\times \{0\}).
\]

\begin{thm}
\label{mot.4}
With the above notation,
there exist natural cofiber sequences of $\Ztwo$-spectra
\begin{gather*}
\THR(\Th(\Normal_Z X))
\to
\THR(X)
\to
\THR(\Blow_Z X,E),
\\
\TCR(\Th(\Normal_Z X))
\to
\TCR(X)
\to
\TCR(\Blow_Z X,E),
\end{gather*}
where $E$ is the exceptional divisor.
\end{thm}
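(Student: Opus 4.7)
The strategy is to transfer the Gysin cofiber sequence for $\THH$ and $\THO$, which follows from their representability in $\logSH_{S^1}(S)$ (Theorem~\ref{mot.3}) together with the deformation-to-the-normal-cone arguments of \cite{logDM}, to the desired sequence for $\THR$ via the joint conservativity of the pair $(i^*,(-)^{\Ztwo})$ on $\Sp_{\Ztwo}$.

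First, I would define the auxiliary $\Ztwo$-spectrum
\[
G(X,Z):=\fib\bigl(\THR(X)\to \THR(\Blow_Z X,E)\bigr),
\]
functorial in the pair $(X,Z)$ by contravariance of $\THR$. By \eqref{mot.15.1}, taking $(X,Z)=(\Normal_Z X,Z)$ with $Z$ the zero section gives $G(\Normal_Z X,Z)\simeq \THR(\Th(\Normal_Z X))$. The deformation space $\Deform_Z X$ carries closed immersions $X\hookrightarrow \Deform_Z X \hookleftarrow \Normal_Z X$, arising as the fibers of the structural map $\Deform_Z X\to\square$ over $\infty$ and $0$. These induce a natural zig-zag in $\Sp_{\Ztwo}$
\[
G(X,Z)\longleftarrow G\bigl(\Deform_Z X,\widetilde{Z}\bigr)\longrightarrow G(\Normal_Z X,Z),
\]
where $\widetilde{Z}$ is the obvious closed subscheme of $\Deform_Z X$ extending $Z$. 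It suffices to show that both arrows are equivalences.

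Following the strategy of Propositions~\ref{mot.15} and \ref{mot.16}, the analogous zig-zag with any $\cF\in\logSH_{S^1}(S)$ in place of $\THR$ consists of equivalences of spectra, by the deformation-to-the-normal-cone Gysin argument of \cite[\S 7.4]{logDM}. Applying this to $\cF=\THH$ and $\cF=\THO$, both of which lie in $\logSH_{S^1}(S)$ by Theorem~\ref{mot.3}, yields equivalences of spectra in each case. Now Proposition~\ref{siso.7} gives $i^*\THR\simeq \THH\circ i^*$, and by definition $\THR^{\Ztwo}=\THO$; since $i^*$ and $(-)^{\Ztwo}$ preserve fibers, the $G$-zig-zag becomes the $\THH$-zig-zag after applying $i^*$ and the $\THO$-zig-zag after applying $(-)^{\Ztwo}$. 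Thus both arrows of the $G$-zig-zag are equivalences after $i^*$ and after $(-)^{\Ztwo}$.

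The pair $(i^*,(-)^{\Ztwo})$ is jointly conservative on $\Sp_{\Ztwo}$: indeed, for $X\in\Sp_{\Ztwo}$ with $i^*X=0$, the isotropy separation fiber sequence $X^{h\Ztwo}\to X^{\Ztwo}\to \Phi^{\Ztwo}X$ forces $X^{h\Ztwo}=0$ and hence $X^{\Ztwo}\simeq\Phi^{\Ztwo}X$, so if further $X^{\Ztwo}=0$ then the stated conservativity of $(i^*,\Phi^{\Ztwo})$ yields $X=0$. Applying this to the two arrows of the $G$-zig-zag produces the desired equivalences in $\Sp_{\Ztwo}$, completing the proof. The main obstacle will be extracting from \cite{logDM} the precise form of the deformation-to-the-normal-cone argument producing the $G^{\cF}$-equivalences for every $\cF\in\logSH_{S^1}(S)$ functorially in $(X,Z)$; however, this parallels the extractions already carried out in the proofs of Propositions~\ref{mot.15} and \ref{mot.16} and should pose no new difficulty.
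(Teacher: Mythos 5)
Your proposal is correct and follows essentially the same route as the paper: the same zig-zag through the fiber term for the deformation space $\Deform_Z X$ (your $G(\Deform_Z X,\widetilde Z)$ is the paper's $\fib(\THR(\Deform_Z X)\to\THR(\Blow_{Z\times\square}(\Deform_Z X),E^D))$), reduction to $\THH$ and $\THO$ via Theorem \ref{mot.3} and the Gysin theorem of \cite{logDM}, and joint conservativity of $(i^*,(-)^{\Ztwo})$. Your added justification of that conservativity via isotropy separation (modulo writing homotopy fixed points where homotopy orbits appear in the standard sequence) is a fine supplement to what the paper merely asserts.
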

\begin{proof}
Again,
we focus on $\THR$.
We have the induced maps
\begin{align*}
\fib(\THR(X)\to \THR(\Blow_Z X,E))
\leftarrow &
\fib(
\THR(\Deform_Z X)
\to
\THR(\Blow_{Z\times \square}(\Deform_Z X),E^D)
)
\\
\to &
\fib(
\THR(\Normal_Z X)
\to
\THR(\Blow_Z (\Normal_Z X),E^N)
),
\end{align*}
where $E$, $E^D$, and $E^N$ are the exceptional divisors.
It suffices to show that the corresponding maps for $\THH$ and $\THR^{\Z/2}$ are equivalences of spectra since the pair of functors $(i^*,(-)^{\Ztwo})$ is conservative.
This is a consequence of \cite[Theorem 7.5.4]{logDM} (see also \cite[Theorem 3.2.14]{logSH}) and Theorem \ref{mot.3}.
\end{proof}

Recall from \cite[Definition 8.5.3]{logSH} that we have the log motivic spectra
\begin{gather*}
\bTHH:=(\THH,\THH,\cdots),
\\
\bTC:=(\TC,\TC,\cdots)
\end{gather*}
whose bonding maps $\THH\to \Omega_{\P^1}\THH$ and $\TC\to \Omega_{\P^1}\TC$ are obtained by the projective bundle formula.

\begin{exm}
\label{mot.6}
Let $X$ be a scheme,
and let $\cE\to X$ be a rank $n$ vector bundle.
There exists a Thom equivalence
\[
\THH(\Th(\cE))
\simeq
\THH(X).
\]
see \cite[Proposition 8.6.9]{logSH}.
This is a consequence of the fact that $\bTHH$ is an orientable logarithmic motivic spectrum \cite[Definition 7.1.3, Theorem 8.6.7]{logSH}.
Unlike this, in general,
we do not have an equivalence
\[
\THR(\Th(\cE))
\simeq
\Sigma^{n(\sigma-1)}\THR(X),
\]
i.e., $\THR(\Th(\cE))\not \simeq \THR(\Th(X\times \A^n))$ by Proposition \ref{mot.17}.

For example,
if $X:=\P_{\Sphere}^n$ and $Z:=\P_{\Sphere}^{n-1}$ with even $n\geq 2$,
then Proposition \ref{mot.2} and Theorems \ref{mot.1} and \ref{mot.4} yield
\[
\THR(\Th(\Normal_Z X))
\simeq
\bigoplus_{j=1}^{\lfloor n/2 \rfloor}
i_*\Sphere.
\]
This is not equivalent to
\[
\Sigma^{\sigma-1}\THR(Z)
\simeq
\Sigma^{\sigma-1}\bigoplus_{j=1}^{\lfloor (n-1)/2 \rfloor}
i_*\Sphere
\oplus
\Sigma^{n(\sigma-1)}
\]
that is obtained by Proposition \ref{mot.2}.
We have a similar conclusion if $n$ is odd too.
\end{exm}

We also obtain the following descent result with respect to blow-ups along smooth centers (with the trivial involutions).

\begin{thm}
\label{mot.5}
Let $Z\to X$ be a closed immersion of smooth schemes over $S$.
Then the induced squares of $\Ztwo$-spectra
\[
\begin{tikzcd}
\THR(X)\ar[d]\ar[r]&
\THR(Z)\ar[d]
\\
\THR(Z\times_X \Blow_Z X)\ar[r]&
\THR(\Blow_Z X),
\end{tikzcd}
\begin{tikzcd}
\TCR(X)\ar[d]\ar[r]&
\TCR(Z)\ar[d]
\\
\TCR(Z\times_X \Blow_Z X)\ar[r]&
\TCR(\Blow_Z X)
\end{tikzcd}
\]
are cartesian.
\end{thm}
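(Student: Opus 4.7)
The plan is to follow the template already used in the proof of Theorem \ref{mot.4}: reduce the $\Ztwo$-equivariant statement to a non-equivariant logarithmic motivic statement via the conservativity of $(i^*,(-)^{\Ztwo})$ on $\Sp_{\Ztwo}$. Concretely, I would argue that it suffices to check that the square becomes cartesian after applying each of $i^*$ and $(-)^{\Ztwo}$ separately. Since $X$, $Z$, $\Blow_Z X$, and $Z\times_X \Blow_Z X$ all carry the trivial involution, Proposition \ref{siso.7} identifies the $i^*$-image of the square with the analogous square for $\THH$, and by definition the $(-)^{\Ztwo}$-image is the analogous square for $\THO$. Thus the theorem is reduced to verifying that the analogous squares for $\THH$ and $\THO$ are cartesian as squares of spectra.

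Next I would invoke Theorem \ref{mot.3}, which places both $\THH$ and $\THO$ in $\logSH_{S^1}(S)$. The statement we want then becomes a special case of the blow-up descent along smooth centers in $\logSH_{S^1}(S)$: for every $\cF\in \logSH_{S^1}(S)$ and every closed immersion $Z\to X$ of smooth schemes over $S$, the square
\[
\begin{tikzcd}
\cF(X)\ar[d]\ar[r]& \cF(Z)\ar[d]\\
\cF(Z\times_X \Blow_Z X)\ar[r]& \cF(\Blow_Z X)
\end{tikzcd}
\]
is cartesian. This is a standard result in logarithmic motivic homotopy theory, available from \cite{logDM}, and should be invoked in the same spirit as the Gysin result \cite[Theorem 7.5.4]{logDM} is invoked in the proof of Theorem \ref{mot.4}.

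The main obstacle is locating (or, failing that, deriving) the precise form of blow-up descent in $\logSH_{S^1}(S)$. If it is not stated verbatim in the cited sources, I would derive it from the logarithmic Gysin cofiber sequence underlying Theorem \ref{mot.4} together with Proposition \ref{mot.15}: applying Gysin to the pair $(X,Z)$ computes $\fib(\cF(X)\to \cF(\Blow_Z X,E))$ as $\cF(\Th(\Normal_Z X))$, while Proposition \ref{mot.15} rewrites the Thom object as the fiber of $\cF(\P(\Normal_Z X\oplus \cO))\to \cF(\P(\Normal_Z X))$. A diagram chase, using the identification $\P(\Normal_Z X)=Z\times_X \Blow_Z X$ and the motivic equivalence $(\Blow_Z X,E)\simeq \P(\Normal_Z X\oplus\cO)$ relative to $Z$ inside $\logSH_{S^1}(S)$, then matches the two vertical fibers in the square and yields the cartesianness.
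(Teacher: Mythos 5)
Your proposal follows the paper's proof exactly: reduce to $\THH$ and $\THO$ via conservativity of the pair $(i^*,(-)^{\Ztwo})$, then invoke Theorem \ref{mot.3} together with blow-up descent along smooth centers in $\logSH_{S^1}(S)$ (the paper cites \cite[Theorem 7.3.3]{logDM} and \cite[Theorem 3.3.5]{logSH} for this). The fallback derivation via the Gysin sequence is unnecessary, since the descent statement is available verbatim in the cited sources.
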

\begin{proof}
Again,
we focus on $\THR$.
The corresponding squares of spectra for $\THH$ and $\THR^{\Z/2}$ are cartesian by \cite[Theorem 7.3.3]{logDM} (see also \cite[Theorem 3.3.5]{logSH}) and Theorem \ref{mot.3}.
This implies the claim since the pair of functors $(i^*,(-)^{\Ztwo})$ is conservative.
\end{proof}

Let $\P^\sigma$ be the scheme $\P^1$ with the involution given by $[x:y]\mapsto [y:x]$.
Let $1$ be the base points of $\P^1$ and $\P^\sigma$,
and then we form the endofunctors
$\Omega_{\P^1}$, $\Omega_{\P^\sigma}$, and $\Omega_{\P^1\wedge \P^\sigma}\simeq \Omega_{\P^\sigma}\Omega_{\P^1}$ on $\PSh((\SmlSm/S)_{\Ztwo},\Sp)$ and $\PSh((\SmlSm/S)_{\Ztwo},\Sp^{\Ztwo})$.

\begin{prop}
\label{mot.7}
Let $X$ be a noetherian separated fs log scheme.
Then there are natural equivalences of $\Ztwo$-spectra
\begin{gather*}
\THR(X)
\simeq
\Omega_{\P^1\wedge \P^\sigma}\THR(X),
\\
\TCR(X)
\simeq
\Omega_{\P^1\wedge \P^\sigma}\TCR(X).
\end{gather*}
\end{prop}
\begin{proof}
Again,
we focus on $\THR$.
Following Remark \ref{mot.12},
we reduce to \cite[Proposition 5.1.5]{THR}.
\end{proof}

\begin{df}
\label{mot.8}
The \emph{$\infty$-category of prelogarithmic motivic $\Ztwo$-spectra over $S$} is defined to be
\[
\prelogSH^{\Ztwo}(S)
:=
\Sp_{\P^1\wedge \P^\sigma}
((\P^\bullet,\P^{\bullet-1})^{-1}\Sh_{seNis}((\SmlSm/S)_{\Ztwo},\Sp)).
\]
\end{df}

\begin{rmk}
\label{mot.9}
Let $\A^w$ denote the affine line $\A^1$ with the involution $w$ given by $x\mapsto -x$.
We have the $\A^1$-homotopy
\begin{equation}
\label{mot.9.1}
\A^1\times \A^w
\to
\A^w
\end{equation}
given by $(x,y)\mapsto xy$.
Due to this,
the projections
$X\times \A^w \to X$ are automatically inverted if we invert the projections $X\times \A^1\to X$ for all $X\in (\Sm/S)_{\Ztwo}$.

However,
this phenomenon is not generalized to the logarithmic setting since there exists no morphism of fs log schemes
\[
\square \times \square^w \to \square^w
\]
extending \eqref{mot.9.1},
where $\square^w$ denotes the fs log scheme $\square$ with the involution $[x,y]\mapsto [-x,y]$.
This indicates that inverting the projections $X\times \square^w\to X$ for all $X\in (\SmlSm/S)_{\Ztwo}$ is at least required for a better behaved $\infty$-category of logarithmic motivic $\Ztwo$-spectra.
\end{rmk}

\begin{df}
\label{mot.10}
The \emph{prelogarithmic motivic $\Ztwo$-fixed point functor} is defined to be the functor
\[
(-)^{\Ztwo}
\colon
\prelogSH^{\Ztwo}(S)
\to
\logSH(S)
\]
sending a $\P^1\wedge \P^\sigma$-spectrum $(\cF_0,\cF_1,\cF_2,\ldots)$ to the $\P^1$-spectrum
\[
(\cF_0,
\Omega_{\P^\sigma}\cF_1,
\Omega_{\P^{2\sigma}} \cF_2,\ldots
)
\]
restricting to the objects of $\SmlSm/S$ with the trivial involutions,
where the bonding map $\Omega_{\P^{n\sigma}}\cF_n\to \Omega_{\P^1\wedge\P^{(n+1)\sigma}}\cF_{n+1}$ is induced by the bounding map $\cF_n\to \Omega_{\P^1\wedge \P^\sigma}\cF_{n+1}$.
\end{df}

\begin{df}
\label{mot.11}
The \emph{motivic real topological Hochschild $\Ztwo$-spectrum} and \emph{motivic real topological cyclic $\Z/2$-spectrum} are the $\P^1\wedge \P^\sigma$-spectra
\begin{gather*}
\bTHR
:=
(\THR^{\Z/2},\THR^{\Z/2},\cdots)
\in
\prelogSH^{\Ztwo}(S),
\\
\bTCR
:=
(\TCR^{\Z/2},\TCR^{\Z/2},\cdots)
\in
\prelogSH^{\Ztwo}(S),
\end{gather*}
whose bonding maps are given by Proposition \ref{mot.7}.
We have the induced motivic spectra
\[
\bTHR^{\Ztwo},
\bTCR^{\Ztwo}
\in
\logSH(S).
\]
\end{df}

\begin{df}
\label{mot.13}
For a strict Nisnevich sheaf of spectra $\cF$ on $(\SmlSm/S)_{\Ztwo}$,
let $i^*\cF$ be the strict Nisnevich sheaf of spectra on $\SmlSm/S$ given by
\[
i^*\cF(X)
:=
\cF(i_\sharp X).
\]
Observe that for every integer $n\geq 1$,
$i^*\cF$ is $(\P^n,\P^{n-1})$-invariant if $\cF$ is $(\P^n,\P^{n-1})$-invariant.
We have the ``forgetful'' functor
\[
i^*
\colon
\prelogSH^{\Ztwo}(S)
\to
\logSH(S)
\]
sending a $\P^1\wedge \P^\sigma$-spectrum $(\cF_0,\cF_1,\cF_2,\ldots)$ to the $\P^1$-spectrum
\[
(i^*\cF_0,\Omega_{\P^\sigma}i^*\cF_1,\Omega_{\P^{2\sigma}} i^*\cF_2,\ldots)
\]
with the induced bonding maps.
\end{df}

\begin{prop}
\label{mot.14}
There are equivalences in $\logSH(S)$
\begin{gather*}
i^*\bTHR
\simeq
\bTHH,
\\
i^*\bTCR
\simeq
\bTC.
\end{gather*}
\end{prop}
\begin{proof}
This is a consequence of Corollary \ref{descent.13} and Proposition \ref{TCR.24}.
\end{proof}

\begin{rmk}
In this section,
we have considered mainly $\THR$ and $\TCR$,
but we can similarly show the analogous results for $\TCR^-:=\THR^{hS^\sigma}$, $\TCR_{hS^\sigma}$, and $\TPR:=\THR^{tS^\sigma}$.
\end{rmk}

\bibliography{logTHR}
\bibliographystyle{siam}
\end{document}